\newcommand{\G}{\Gamma}
\newcommand{\inv}[1]{\mbox{${#1}^{-1}$}}
\newcommand{\norm}[1]{\|{#1}\|}
\newcommand{\w}[1]{\widehat{#1}}
\newcommand{\mcF}{\mathcal F}
\newcommand{\mcH}{\mathcal H}
\newcommand{\eps}{\varepsilon}
\newcommand{\mcD}{\mathcal{D}}
\newcommand{\mcV}{\mathcal{V}}
\newcommand{\N}{\mbox{${\mathbb{N}}$}}
\newcommand{\C}{\mbox{${\mathbb  C}$}}
\newcommand{\Om}{\Omega}
\newcommand{\1}{\mbox{${{\bf 1}}$}}
\newcommand{\period}{\;.}
\newcommand{\tr}{\mathrm{tr}}
\newcommand\atopn[2]{\genfrac{}{}{0pt}{}{#1}{#2}}
\date{\today}
\newtheorem{theorem_intro}{Theorem}
\newtheorem{thm}{Theorem}[section]
 \newtheorem*{reftheorem1}{Theorem \reftotheorem}
\newenvironment{reftheorem}[1]
  {\newcommand{\reftotheorem}{\ref{#1}}\begin{reftheorem1}}
  {\end{reftheorem1}}
 \newtheorem{coro}[thm]{Corollary}
 \newtheorem{lem}[thm]{Lemma}
 \newtheorem{prop}[thm]{Proposition}
 \theoremstyle{definition}
 \newtheorem{defn}[thm]{Definition}
 \theoremstyle{remark}
 \newtheorem{rem}[thm]{Remark}
 \newtheorem{notat}[thm]{Notation}
\let\oldmarginpar\marginpar
  \renewcommand\marginpar[1]
\footnotesize \textit{#1}}}
\begin{document}

\title[Vector-Valued Multiplicative Functions II]
{Free Group Representations from \\  Vector-Valued Multiplicative Functions, II}
\author{M. Gabriella Kuhn}
\address{ Dipartimento di Matematica\\ Universit\`{a} di Milano ``Bicocca'', Edificio U5 Via Cozzi 53, 20125 Milano, ITALIA }
\curraddr{}
\email{mariagabriella.kuhn@unimib.it}
\thanks{}

\author{Sandra Saliani}
\address{Dipartimento di Matematica, Informatica ed Economia\\ Universit\`{a} degli Studi
 della Basilicata, Viale dell'ateneo lucano 10, 85100 Po\-ten\-za, ITALIA}
\curraddr{}
\email{sandra.saliani@unibas.it}
\thanks{}

\author{Tim Steger}
\address{Struttura di Matematica e Fisica \\ Universit\`{a} degli Studi di Sassari,
Via Vienna 2, 07100 Sassari, ITALIA}
\curraddr{}
\email{steger@uniss.it}
\thanks{}

\subjclass{Primary: 43A65, 43A35. Secondary: 15A42, 15B48, 22D25, 22D10}

\keywords{irreducible unitary representations; free groups;
boundary realization; summabilty methods for matrix coefficients}

\date{\today}

\dedicatory{}

\begin{abstract}
Let $\Gamma$ be a non-commutative free group on finitely many generators.
In a previous work two of the authors have constructed the class of multiplicative  representations of $\Gamma$ and proved them irreducible as
 representation of $\G\ltimes_\lambda C(\Om)$.
In this paper we  analyze multiplicative representations
as  representations of $\Gamma$ and we prove a criterium for
irreducibility based on  the growth of their
matrix coefficients.
\end{abstract}

\maketitle

\section{Introduction}

Let $\Gamma$ be a non-commutative free group on finitely many generators,
$\Om$ its boundary and $C(\Om)$ the $C^*$-algebra of complex valued continuous functions on $\Om$.
We say that a unitary representation of $\Gamma$ is {\it tempered} if it is weakly contained in the regular
 representation or, alternatively, if it is a representation of
$C^*_{\text{red}}(\Gamma)$, the regular $C^*$-algebra of $\G$.

In \cite{K-S3}, the first of a series of papers, two of the authors have constructed the class of {\it multiplicative } representations: they are acting on the completion of some space $\mcH^\infty$ of ``smooth functions'', which is built up
from a {\em matrix system with inner product} denoted by $(V_a,H_{b a},B_a)$.
This class
is large enough to include all tempered representations of $\G$ hitherto constructed using the action of $\Gamma$ on its Cayley graph.
These representations are easily extendable to {\it boundary representations},
that is representations of the crossed product $C^*$-algebra
$\Gamma\ltimes_\lambda C(\Omega)$.
In \cite{K-S3} it has been proved that multiplicative representations are
irreducible  when considered as  boundary
 representations, and  criteria have been given to say exactly when two
of them are equivalent.

In this paper we give conditions that ensure the irreducibility of a boundary
representation {\it as a representation of $\Gamma$.}

Our  criteria are based on general facts concerning {\it boundary realizations}
\cite{K-S2} as well as  on the computation of the growth of matrix coefficients.

 In short, a
{\it boundary realization}  of a  unitary representation $(\pi,\mathcal{H})$ of $\Gamma$
is a pair
$(\iota,\pi')$ where
\begin{itemize}
\item $\pi'$~is a representation of $\Gamma\ltimes_\lambda C(\Omega)$
on a Hilbert space~$\mathcal{H}'$;
\item $\iota$~is an isometric $\G$-inclusion of $\mathcal{H}$ into $\mathcal{H}'$;
\item $\mathcal{H}'$ is generated as a $(\G,C(\Om))$-space
by $\iota(\mathcal{H})$.
\end{itemize}

If $\iota$ is unitary (i.e. $\mathcal{H}'=\mathcal{H}$),
the boundary realization is
called {\it perfect} otherwise we shall say that $\iota$ is {\it imperfect}.

Since $\G$ acts amenably (in the sense of Zimmer) on $\Omega$, a
representation $(\pi,\mathcal{H})$ of $\G$ admits a boundary realization if and only
if it is tempered. This follows from the general considerations in \cite{Q-S};
a short proof specifically for the case at hand can be found in \cite{I-K-S}.

Every multiplicative representation $\pi$ provides a boundary realization
of itself when considered as a representation of
$\Gamma\ltimes_\lambda C(\Omega)$: are there other
boundary realizations? In this paper we give a criterion, based on the growth
of matrix coefficients, that ensures that there are {\it no other} boundary
realizations.

Let us briefly explain our main tools.
In 1979 Haagerup \cite{H} showed
 that, for a representation
$\pi$ of $\Gamma$ having a cyclic vector $v$,
the following conditions are equivalent:
\begin{itemize}
\item[i)] $\pi$ is tempered;
\item[ii)] The function $\phi_\eps^v(x)= < v,\pi(x)v> e^{-\eps|x|}$
is square integrable for every positive $\eps;$
\item[iii)]
$\displaystyle
\sum_{|x|=n} |< v, \pi(x)v>|^2\leq  (n+1)^2\|v\|^4.$
\end{itemize}
A consequence of iii) is
\begin{equation}\label{haag}
\|\phi_\eps^v\|_2^2=
\sum_{x\in\Gamma} |< v,\pi(x)v>|^2e^{-2\eps|x|}
\leq C\|v\|^4 \left(\frac1{\eps}\right)^3
\period
\end{equation}
We shall
write $\|\phi_\eps^v\|_2^2\simeq\frac1{\epsilon^\alpha}$ if there exist positive
constants $c_1$ and $c_2$, possibly depending on $v$, such that
\begin{equation*}
\frac{c_1}{\epsilon^\alpha}\leq\|\phi_\eps^v\|_2^2\leq \frac{c_2}{\epsilon^\alpha}.
\end{equation*}
The exponent $3$ for $1/\eps$ in \eqref{haag}
 is an upper bound for the growth of the $\ell^2$ norm
of $\phi_\eps^v$ which, as far as we know,
 is attained only in very special cases, namely
for the representations corresponding to the
endpoints of the  isotropic/anisotropic principal
series of Fig\`a-Talamanca and Picardello \cite{FT-P},
Fig\`a-Talamanca and Steger \cite{FT-S} while
 for the endpoint representation of the series considered by Paschke
\cite{P}, \cite{P2}, one gets $1/\epsilon^2$.

In this paper we shall produce a method to compute
$\|\phi_\eps^v\|_2^2$ for  a
 multiplicative representation
and we continue
the investigation between the existence of other boundary realizations,
the irreducibility and the behavior
of $\|\phi_\eps^v\|_2^2$ started in \cite{K-S2}.

The main results are the following
\begin{theorem_intro}\label{teorema1}
Given a multiplicative representation $\pi$,
one can always find a positive integer $\alpha\leq3$, depending only on $\pi$,
such that
$\|\phi_\eps^v\|_2^2
\simeq \left(\frac1\epsilon\right)^\alpha$
for  all smooth vectors $v$ in  $\mcH^\infty$.
\end{theorem_intro}

\begin{theorem_intro}\label{main}
Let $\pi$ be a multiplicative representation.
Assume  that for all $v\in \mcH^\infty$
$$
\text{either}\quad\|\phi_\eps^v\|_2^2\simeq\frac1{\varepsilon^2}\quad \text{or}\quad
\|\phi_\eps^v\|_2^2\simeq\frac1{\varepsilon^3}\quad \text{hold as}\quad \eps\rightarrow 0,
$$
 then
\begin{itemize}
\item There is only one boundary realization of $\pi$.
\item $\pi$ is irreducible as a $\Gamma$-representation.
\end{itemize}
\end{theorem_intro}

Finally
we shall provide a necessary and  sufficient condition (see
Lemma \ref{lemma5.4})
under which
$$
\|\phi_\eps^v\|_2^2
\simeq\frac1{\varepsilon^2}\quad \text{ as}\quad \eps\rightarrow 0,
$$
for all vectors $v\in\mathcal{H}^\infty$.

\section{Boundary Representations}

Let $\Gamma$ be a free group on a finite symmetric set of generators $A$.
We shall always use the letters $a$, $b$, $c$, $d$ for elements of $A$.
The identity element is denoted by $e$. Every element has a unique reduced
 expression as $x=a_1\dots a_n$ where $a_j a_{j+1}\neq e$. In this case
the length, $|x|$,  of $x$ is $n$.
  The Cayley graph of $\Gamma$ has as vertex set the elements of $\Gamma$ and
  as undirected edges the couples $\{ x,xa\}$ for $x\in\Gamma$ and $a\in A.$
  The distance between two vertices of the tree is defined as the number of the
  edges joining them, so $d(e,x)=|x|$ and $d(x,y)=|x^{-1}y|.$ Two vertices
 $x_1,x_2,$ of the tree are said adjacent if $d(x_1,x_2)=1.$

The boundary $\Omega$ of $\Gamma$ consists of the set of infinite reduced
words $a_1a_2a_3\dots.$, with the topology defined by  the basis
$$\Omega(x)=\{\omega\in\Omega,\;\text{the reduced word for}\;
 \omega\;\text{starts with}\;x\}\period$$
 The sets $\Omega(x)$
are both closed and open and $\Omega$ is a compact Hausdorff
space homeomorphic to the
Cantor set.
$\Gamma$ acts on  itself by left translation. This action preserves  the
tree structure and extends to an action on the boundary of the tree $\Omega$
by the obvious multiplication by finite and infinite reduced words.
Let ${C}(\Omega)$ be the ${C}^{\ast}$-algebra of continuous
 complex valued functions on $\Omega$, under pointwise operations.
Let $\lambda:\Gamma\rightarrow Aut({C}(\Omega))$ be the action by
left translation
$$(\lambda(x)F)(\omega)=F(x^{-1}\omega).$$
\begin{defn} A boundary representation is a triple
$(\pi_{\Gamma},\pi_{\Omega},\!\mathcal{H})$ where
\begin{itemize}
\item $\pi_{\Omega}:C(\Omega)\rightarrow \mathcal{B}(\mathcal{H})$ is a
 $\ast$-representation of $C(\Omega);$
\item $\pi_{\Gamma}:\Gamma\rightarrow \mathcal{U}(\mathcal{H})$ is
a  unitary representation of $\Gamma;$
\item For all $x\in\Gamma$ and $F\in C(\Omega),$
$$\pi_{\Gamma}(x)\pi_{\Omega}(F)\pi_{\Gamma}(x^{-1})=\pi_{\Omega}(\lambda(x)F).$$
\end{itemize}
\end{defn}
Whenever there is no confusion we shall omit the subscripts and write $\pi$
for both $\pi_{\Gamma}$ and $\pi_{\Omega}.$ A boundary representation is nothing
else that a representation of  $\Gamma\;\ltimes_{\lambda} C(\Omega).$
\begin{defn}
A subrepresentation of a boundary representation $\pi$ on $\mathcal{H}$
is a closed subspace of $\mathcal{H}$ invariant under the (restricted)
action of both $\pi(\Gamma)$ and $\pi(C(\Omega)).$

A boundary representation $\pi$ is  irreducible  if
$\mathcal{H}\neq 0$ and $0$ and $\mathcal{H}$ are the only subrepresentations of $\pi.$

Given another boundary representation $\pi^{\sharp}$ on $\mathcal{H}^{\sharp}$,
a unitary map $\mathcal{J}:\mathcal{H}\rightarrow \mathcal{H}^{\sharp}$ such that
$\pi^{\sharp}(x)\mathcal{J}=\mathcal{J}\pi(x),$ for all $x\in\Gamma,$ and
$\pi^{\sharp}(F)\mathcal{J}=\mathcal{J}\pi(F),$ for all $F\in C(\Omega),$ is
called an intertwiner from $\pi$ to $\pi^{\sharp}.$ Two boundary representations
 are called equivalent if there exists an intertwiner between them.
\end{defn}
\subsection{General Results on Boundary Realizations}
\begin{defn}
Given a representation
$(\pi,\mathcal{H})$, we say that a non-zero vector $w\in \mathcal{H}$
satisfies the Good Vector Bound if
there exists a constant $C$, depending only on $w$, such that
\begin{equation}
\tag{GVB}\label{GVB}
\displaystyle\sum_{|x|=n}{|<v,\pi(x)w>|^2}\leq C\| v\|^2,\quad
\text{for all }\; n\in\N, v\in \mathcal{H}.
\end{equation}
\end{defn}
\begin{rem}
We recall that, if $w$ satisfies \eqref{GVB}
and we define, for $\eps>0$, $\phi^w_{\eps}(x)=<w,\pi(x)w> e^{-\eps|x|}$, the growth condition  \eqref{haag} discussed in the
Introduction becomes
$$
\|\phi_\eps^w\|_2^2 =
\sum_{n=0}^\infty\sum_{|x|=n}|< w,\pi(x)w>|^2e^{-2\eps|x|}\leq
\frac{C\|w\|^2}{1-e^{-2\eps}}\simeq\frac1\eps
$$
telling us that the quantity $\eps\|\phi^w_{\eps}\|_2^2$ is bounded as $\eps\to0$.
\end{rem}

There is a deep relation between the existence
of imperfect boundary realizations and the magnitude of the quantities
$\norm{\phi_\eps^v}^2_2$, namely we have the following
\begin{prop}\cite{K-S2}
If a representation $(\pi,\mathcal{H})$ of $\G$ admits an {\it imperfect}
boundary realization, then some nonzero vector $w\in \mathcal{H}$ satisfies
\eqref{GVB}.
\end{prop}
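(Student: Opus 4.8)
The plan is to deduce the Good Vector Bound directly from the extra $C(\Om)$-action that the imperfect realization makes available. Identify $\mcH$ with $\iota(\mcH)\subseteq\mcH'$ and let $P\colon\mcH'\to\mcH$ be the orthogonal projection; since $\mcH$ is $\G$-invariant, $P$ commutes with every $\pi'(x)$, and its complement $\mcH^{\perp}:=\mcH'\ominus\mcH$ is $\G$-invariant as well. Imperfection means exactly that $\mcH^{\perp}\ne0$, so I would fix a nonzero $\xi\in\mcH^{\perp}$ and, for a reduced word $z$, set $w_z:=P\,\pi'(\chi_{\Om(z)})\,\xi\in\mcH$. The first claim is that $w_z\ne0$ for at least one $z$: otherwise $\pi'(\chi_{\Om(z)})\xi\in\mcH^{\perp}$ for every $z$, and because the indicators $\chi_{\Om(z)}$ generate the locally constant functions, which are dense in $C(\Om)$, this would force $\pi'(F)\xi\in\mcH^{\perp}$ for all $F\in C(\Om)$; together with $\G$-invariance of $\mcH^{\perp}$ this exhibits a nonzero $(\G,C(\Om))$-invariant subspace disjoint from $\mcH$, contradicting that $\iota(\mcH)$ generates $\mcH'$. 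Fix such a $z$ and put $w:=w_z$.

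The second step is to compute the matrix coefficients of this $w$ inside $\mcH'$. For $v\in\mcH$ we have $Pv=v$ and $P\pi'(x)=\pi'(x)P$, so
$$
\langle v,\pi(x)w\rangle=\langle v,P\,\pi'(x)\pi'(\chi_{\Om(z)})\xi\rangle=\langle v,\pi'(x)\pi'(\chi_{\Om(z)})\xi\rangle .
$$
The covariance relation $\pi'(x)\pi'(F)\pi'(x^{-1})=\pi'(\lambda(x)F)$ with $\lambda(x)\chi_{\Om(z)}=\chi_{x\Om(z)}$ gives $\pi'(x)\pi'(\chi_{\Om(z)})=\pi'(\chi_{x\Om(z)})\pi'(x)$, whence
$$
\langle v,\pi(x)w\rangle=\langle \pi'(\chi_{x\Om(z)})v,\pi'(x)\xi\rangle .
$$
Cauchy--Schwarz and the unitarity of $\pi'(x)$ then yield $|\langle v,\pi(x)w\rangle|\le\|\pi'(\chi_{x\Om(z)})v\|\,\|\xi\|$, so that
$$
\sum_{|x|=n}|\langle v,\pi(x)w\rangle|^{2}\le\|\xi\|^{2}\sum_{|x|=n}\|\pi'(\chi_{x\Om(z)})v\|^{2}.
$$

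The crux, and the step I expect to be the main obstacle, is to bound $\sum_{|x|=n}\|\pi'(\chi_{x\Om(z)})v\|^{2}$ by $M\|v\|^{2}$ with $M$ independent of $n$. Since $\pi'$ is a $\ast$-representation of $C(\Om)$, it is enough to establish the pointwise estimate $\sum_{|x|=n}\chi_{x\Om(z)}\le M\cdot\1$, for then $\sum_{|x|=n}\pi'(\chi_{x\Om(z)})\le M\cdot I$ as positive operators. This is a purely combinatorial statement about the tree: when the last letter of $x$ is not the inverse of the first letter of $z$ one has $x\Om(z)=\Om(xz)$, and these cells are pairwise disjoint across such $x$; the remaining $x$ produce cells altered only by the at most $|z|$ cancellations occurring near their root, and a short geodesic count shows that every boundary point lies in at most $M=M(z)$ of the sets $x\Om(z)$. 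I would isolate this bounded-overlap estimate as a separate lemma, since controlling the cancellation fringe uniformly in $n$ is where the real work lies.

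Granting the overlap bound, combining the displays gives $\sum_{|x|=n}|\langle v,\pi(x)w\rangle|^{2}\le M\|\xi\|^{2}\|v\|^{2}$ for all $v\in\mcH$ and all $n$, which is precisely \eqref{GVB} for the nonzero vector $w$, with constant $C=M\|\xi\|^{2}$. Note that imperfection is used only to supply the vector $\xi\in\mcH^{\perp}$ that guarantees $w\neq0$; the localization estimate above is the content of the argument.
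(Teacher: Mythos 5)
Your reduction to the pointwise estimate $\sum_{|x|=n}\chi_{x\Om(z)}\le M\cdot\1$ is where the argument breaks: that estimate is false, and not by a small margin. The sets $x\Om(z)$ for $|x|=n$ are pairwise disjoint only for those $x$ with $xz$ reduced; when $x$ ends in (a prefix of) $z^{-1}$ there is cancellation and $x\Om(z)$ becomes a \emph{co-small} set. Concretely, take $z=a$ of length one and $\om=b^\infty$: for every $x$ of length $n$ ending in $a^{-1}$ the geodesic from $x$ towards $\om$ begins with the step $x\mapsto xa$, so $x^{-1}\om\in\Om(a)$, i.e. $\om\in x\Om(a)$. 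There are on the order of $3^{n-1}$ such $x$, so the overlap function grows exponentially in $n$, and with it the operator $\pi'\bigl(\sum_{|x|=n}\chi_{x\Om(z)}\bigr)$. Consequently the Cauchy--Schwarz step $|\langle v,\pi(x)w\rangle|\le\|\pi'(\chi_{x\Om(z)})v\|\,\|\xi\|$ gives away too much: for exponentially many $x$ of length $n$ the right-hand side is comparable to $\|v\|\,\|\xi\|$, and the sum over $|x|=n$ cannot be bounded this way.

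The missing idea is flagged by your own closing remark that imperfection is used ``only to supply the vector $\xi$.'' In fact the orthogonality $\pi'(x)\xi\perp\mathcal{H}$ must be used a second time, at exactly the problematic step: since $v\in\mathcal{H}$, you may write
\begin{equation*}
\langle v,\pi(x)w\rangle=\langle \pi'(\chi_{x\Om(z)})v,\pi'(x)\xi\rangle
=\langle \pi'(\chi_{x\Om(z)})v-v,\pi'(x)\xi\rangle
=-\langle \pi'(\chi_{\Om\setminus x\Om(z)})v,\pi'(x)\xi\rangle,
\end{equation*}
and then, for each $x$, apply Cauchy--Schwarz with whichever of $x\Om(z)$ and its complement is the ``small'' set. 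With that choice (e.g.\ for $|z|=1$: take $\Om(xz)$ when $xz$ is reduced and $\Om(x)$ when $x$ ends in $z^{-1}$) one does get a family with overlap bounded independently of $n$, and the rest of your computation goes through. As written, though, the proposal rests on a false combinatorial lemma, and the first half (producing $z$ with $P\pi'(\chi_{\Om(z)})\xi\neq0$), while correct, is not where the content of the proposition lies. Note also that the present paper only cites this proposition from \cite{K-S2} and gives no proof of it here.
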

\begin{coro}\label{sistemiinequiv}\cite{K-S2}
Let $(\pi,\mathcal{H})$ be a unitary representation of $\G$ and suppose that
no nonzero vector satisfies \eqref{GVB}. Then any two boundary realizations
of $\pi$ are equivalent.
\end{coro}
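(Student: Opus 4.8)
The plan is to deduce this entirely from the preceding Proposition by means of a ``diagonal'' boundary realization, so that the only real input is the contrapositive statement about \eqref{GVB}. First I would record that step: since no nonzero vector of $\mcH$ satisfies \eqref{GVB}, the Proposition (read contrapositively) says that $(\pi,\mcH)$ admits \emph{no} imperfect boundary realization. Hence every boundary realization of $\pi$ is perfect, and in particular for any boundary realization $(\iota,\pi')$ the inclusion $\iota$ is a \emph{unitary} $\G$-map of $\mcH$ onto its target space.

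Now let $(\iota_1,\pi_1')$ on $\mcH_1'$ and $(\iota_2,\pi_2')$ on $\mcH_2'$ be two boundary realizations of $\pi$. Inside $\mcH_1'\oplus\mcH_2'$, carrying the representation $\pi_1'\oplus\pi_2'$ of $\G\ltimes_\lambda C(\Om)$, I would form the closed $(\G,C(\Om))$-subspace $\mathcal{K}$ generated by the diagonal image $\{(\iota_1 v,\iota_2 v):v\in\mcH\}$. The map $v\mapsto\tfrac1{\sqrt2}(\iota_1 v,\iota_2 v)$ is an isometric $\G$-inclusion of $\mcH$ into $\mathcal{K}$, the factor $1/\sqrt2$ restoring isometry since $\|\iota_1 v\|^2+\|\iota_2 v\|^2=2\|v\|^2$; and $\mathcal{K}$ is by definition generated as a $(\G,C(\Om))$-space by this image. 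Thus $\bigl(\tfrac1{\sqrt2}\,\mathrm{diag},\,(\pi_1'\oplus\pi_2')|_{\mathcal{K}}\bigr)$ is again a boundary realization of the \emph{same} representation $\pi$.

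By the first step this diagonal realization is therefore perfect, so the isometry $v\mapsto\tfrac1{\sqrt2}(\iota_1 v,\iota_2 v)$ is onto $\mathcal{K}$; being an isometry from a complete space its range is closed, whence every element of $\mathcal{K}$ has the form $\tfrac1{\sqrt2}(\iota_1 v,\iota_2 v)$. The decisive consequence is the $C(\Om)$-invariance of $\mathcal{K}$: for $F\in C(\Om)$ and $v\in\mcH$ the vector $\bigl(\pi_1'(F)\iota_1 v,\,\pi_2'(F)\iota_2 v\bigr)$ lies in $\mathcal{K}$, hence equals $(\iota_1 w,\iota_2 w)$ for a \emph{single} $w\in\mcH$. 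Comparing coordinates and using that each $\iota_j$ is invertible gives $w=\iota_1^{-1}\pi_1'(F)\iota_1 v=\iota_2^{-1}\pi_2'(F)\iota_2 v$, that is, $\iota_1^{-1}\pi_1'(F)\iota_1=\iota_2^{-1}\pi_2'(F)\iota_2$ on $\mcH$ for every $F$.

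Finally I would set $\mathcal{J}=\iota_2\iota_1^{-1}\colon\mcH_1'\to\mcH_2'$, which is unitary and satisfies $\mathcal{J}\iota_1=\iota_2$. The $\G$-intertwining $\mathcal{J}\pi_1'(x)=\pi_2'(x)\mathcal{J}$ is automatic from $\iota_j\pi(x)=\pi_j'(x)\iota_j$, while the identity just obtained rearranges to $\mathcal{J}\pi_1'(F)=\pi_2'(F)\mathcal{J}$ for all $F\in C(\Om)$, so $\mathcal{J}$ is an intertwiner and the two realizations are equivalent. I expect the only delicate point to be confirming that the diagonal subspace $\mathcal{K}$ genuinely is a boundary realization of $\pi$ itself, so that the Proposition may legitimately be applied to it, and then upgrading the mere \emph{invariance} of $\mathcal{K}$ to the conclusion that it contains \emph{only} diagonal vectors; once that is in hand, the coordinatewise comparison closes the argument.
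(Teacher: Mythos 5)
Your proof is correct. Note that the paper itself gives no proof of this corollary --- it is quoted from \cite{K-S2} --- but your argument (form the diagonal realization $v\mapsto\tfrac1{\sqrt2}(\iota_1v,\iota_2v)$ inside $\mcH_1'\oplus\mcH_2'$, invoke the preceding Proposition to force it to be perfect, and read off the intertwiner $\iota_2\iota_1^{-1}$ from the fact that the generated $(\G,C(\Om))$-space then contains only diagonal vectors) is exactly the standard direct-sum argument used in that reference, and all the steps you flag as delicate do go through as you describe.
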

\begin{coro}\label{chiave}\cite{K-S2}
Let $(\pi,\mathcal{H})$ be a unitary representation of $\G$ and suppose that
no nonzero vector satisfies \eqref{GVB}. Let $(\pi',\mathcal{H}')$
 be a boundary realization
of $\pi$ and assume that $\pi'$ is irreducible as a representation
of $\Gamma\;\ltimes_{\lambda} C(\Omega)$.
Then $\pi$ is irreducible as
a representation of $\G$.
\end{coro}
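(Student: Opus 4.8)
The plan is to argue by contradiction: assuming $\pi$ is reducible as a $\G$-representation, I will manufacture an \emph{imperfect} boundary realization of a subrepresentation and then invoke the Proposition above to produce a vector satisfying \eqref{GVB}, contradicting the hypothesis. So suppose there is a closed $\G$-invariant subspace $\mathcal{K}$ with $0\neq\mathcal{K}\neq\mathcal{H}$; since $\pi$ is unitary, $\mathcal{K}^{\perp}$ is also $\G$-invariant and is nonzero.

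First I would restrict the inclusion to $\mathcal{K}$. The map $\iota|_{\mathcal{K}}\colon\mathcal{K}\to\mathcal{H}'$ is again an isometric $\G$-inclusion, and because $\pi'$ is irreducible as a representation of $\G\ltimes_\lambda C(\Om)$ while $\iota(\mathcal{K})\neq 0$, the closed $(\G,C(\Om))$-invariant subspace generated by $\iota(\mathcal{K})$ must be all of $\mathcal{H}'$. Hence $(\iota|_{\mathcal{K}},\pi')$ is a boundary realization of $(\pi|_{\mathcal{K}},\mathcal{K})$. The crucial point is that this realization is imperfect: since $\iota$ is isometric it carries the orthogonal decomposition $\mathcal{H}=\mathcal{K}\oplus\mathcal{K}^{\perp}$ to an orthogonal decomposition inside $\mathcal{H}'$, so $\iota(\mathcal{K})$ is orthogonal to $\iota(\mathcal{K}^{\perp})\neq 0$ and is therefore a proper subspace of $\mathcal{H}'$. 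Applying the Proposition above to $(\pi|_{\mathcal{K}},\mathcal{K})$ then yields a nonzero vector $w\in\mathcal{K}$ satisfying \eqref{GVB} relative to the representation $\pi|_{\mathcal{K}}$.

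It remains to transfer this conclusion back to $\mathcal{H}$. Because $\mathcal{K}$ is $\G$-invariant and $w\in\mathcal{K}$, for every $x$ the vector $\pi(x)w$ lies in $\mathcal{K}$, so $\langle v,\pi(x)w\rangle=\langle Pv,\pi(x)w\rangle$, where $P$ denotes the orthogonal projection of $\mathcal{H}$ onto $\mathcal{K}$. Summing over $|x|=n$ and using $\|Pv\|\leq\|v\|$ shows that the bound \eqref{GVB}, valid a priori only for $v\in\mathcal{K}$, upgrades to the same bound for all $v\in\mathcal{H}$; thus $w$ is a nonzero vector of $\mathcal{H}$ satisfying \eqref{GVB}, contradicting the hypothesis. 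This contradiction shows that no such $\mathcal{K}$ exists, i.e.\ $\pi$ is irreducible as a $\G$-representation.

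I expect the only genuine obstacle to be conceptual rather than computational: one has to notice that reducibility of $\pi$ is precisely what renders the restricted realization imperfect, with the orthogonal complement $\iota(\mathcal{K}^{\perp})$ serving as the obstruction to surjectivity of $\iota|_{\mathcal{K}}$. Once this is seen, the Proposition above does the essential work, and the remaining verifications—that the restriction is still a boundary realization, and that \eqref{GVB} within $\mathcal{K}$ implies \eqref{GVB} within $\mathcal{H}$—are routine consequences of the $\G$-invariance of $\mathcal{K}$ and the isometry of $\iota$.
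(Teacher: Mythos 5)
Your argument is correct: restricting $\iota$ to a proper nonzero $\G$-invariant subspace $\mathcal{K}$ does yield a boundary realization of $\pi|_{\mathcal{K}}$ (cyclicity follows from the irreducibility of $\pi'$ as a $\G\ltimes_\lambda C(\Om)$-representation), imperfection is forced by the nonzero orthogonal image $\iota(\mathcal{K}^\perp)$, and the projection trick legitimately upgrades \eqref{GVB} from $\mathcal{K}$ to all of $\mathcal{H}$. The paper itself gives no proof, deferring to \cite{K-S2}, and your reconstruction is exactly the intended route through the preceding Proposition.
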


\section{Multiplicative Representations, Irreducibility and Inequivalence}

 A matrix system (system in short) $(V_a, H_{ba})$ consists of
finite dimensional complex vector spaces $V_a,$ for each $a\in A,$ and linear maps
$H_{ba}:V_a\rightarrow V_b$ for each pair $a,b\in A,$ where $H_{ba}=0$
whenever $ba=e.$
\begin{defn}
An invariant subsystem of $(V_a, H_{ba})$ is a collection of subspaces
$W_a\subseteq V_a$ such that $H_{ba}(W_a)\subseteq W_b,$ for all $a,b\in A.$

The system $(V_a, H_{ba})$ is called {\bf irreducible} if it is nonzero and there are no
invariant subsystems except for itself and the zero subsystem.
\end{defn}
\begin{defn}
A map from the system $(V_a, H_{ba})$ to
$(V_a^{\sharp}, H_{ba}^{\sharp}),$ is a tuple of linear maps $(J_a),$ where
 $J_a:V_a\rightarrow V_a^{\sharp},$ and $H_{ba}^{\sharp} J_a=J_b H_{ba}.$
 The map $(J_a)$ is called an equivalence if each $J_a$ is a bijection, in that case
 the systems are called equivalent.
\end{defn}
\begin{rem}\label{inequivalenti}
A map $(J_a)$ between irreducible systems
$(V_a, H_{ba})$ and
$(V_a^{\sharp}, H_{ba}^{\sharp}),$ is either $0$ or an equivalence. This is because
the kernels (respectively the images)
 of the maps $J_a$ constitute an invariant subsystem.
\end{rem}
\begin{defn}
The triple  $(V_a,H_{ba},B_a)$ is a system with inner products
if $(V_a,H_{ba})$ is a matrix system, $B_a$ is a positive definite sesquilinear form
on $V_a$ for each $a\in A,$ and  for any $a\in A$ and $v\in V_a$ one has
\begin{equation}\label{compatibility}
B_a(v,v)=\sum_{b\in A}{B_b( H_{ba}v, H_{ba}v) }.
\end{equation}
\end{defn}

Every irreducible matrix system can be normalized so that it admits
 a unique (up to scalars)
 tuple $(B_a)$ of {\it strictly} positive definite forms
(see \cite{K-S3} Theorem 4.9).
 From this point on all the systems that we shall consider will be both
{\bf irreducible and normalized} so that \eqref{compatibility} holds for a given tuple of positive definite forms.

\begin{defn}
Let $(V_a, H_{ba}, B_a)$ be an irreducible  system with inner products. A multiplicative function
is a map
$f:\Gamma\rightarrow \amalg_{a\in A}{V_a}$ satisfying the following condition: there exists
$N=N(f)$ such that for any $x\in\Gamma,$ $|x|\geq N,$
\begin{equation}\label{molti}\begin{array}{ll}
f(xa)\in\ V_a,&\text{if}\; |xa|=|x|+1,\\
f(xab)=H_{ba}f(xa),& \text{if}\;|xab|=|x|+2.
\end{array}
\end{equation}

Two multiplicative functions $f,g$ are called equivalent if $f(x)=g(x)$ for all but finitely many
elements of $\Gamma$. $\mathcal{H}^{\infty}$
denotes the quotient space of the space
of multiplicative functions with respect to this equivalence relation.
For any $f_1,f_2\in \mathcal{H}^{\infty}$ let
\begin{equation}\label{inner}
<f_1,f_2> 
=\sum_{|x|=N}\sum_{\atopn{a\in A}{|xa|=|x|+1}}
{B_a(f_1(xa),f_2(xa))},
\end{equation}
where $N$ is big enough so that both $f_1$ and $f_2$ satisfy (\ref{molti}).
\end{defn}
\begin{defn}
The  completion of  $\mathcal{H}^{\infty}$ with respect to the norm induced by the inner product
(\eqref{inner}) will be our  representation space $\mathcal{H}$.
\end{defn}

Multiplicative functions can also be defined starting from matrix systems which
are not irreducible (see \cite{Iozzi_Kuhn_Steger_stab}).
 In this case one can
still find a tuple of {\it positive semidefinite forms $B_a$}
such that \eqref{compatibility} holds. Then one can proceed to define an
inner product as in \eqref{inner}. However in this case the inner product
\eqref{inner} will
induce a seminorm and $\mcH$ will split into the direct sum of orthogonal
(with respect to to the $B_a$) subspaces
(see \cite{Iozzi_Kuhn_Steger_stab} Section 5). As a consequence
 the corresponding
multiplicative
representation will be reducible and we shall not consider this possibility.

For any directed edge $(x,xa)$ of the tree, we define
$$\Gamma(x,xa)=\{y\in\Gamma,\;d(y,xa)<d(y,x)\},$$
and we get $\Gamma=\Gamma(x,xa)\amalg\Gamma(xa,x).$
We set also  $\Gamma(a)=\Gamma(e,a),$ and
$$
\G(x)=\{z\in\G,\; \mbox{the reduced word for $z$ starts with $x$}\}
$$
$$
\widetilde{\Gamma}(a)=\{y\in\Gamma,\;\text{the reduced word for}\; y\;
\text{ends in}\;a\}\period
$$

The following functions can be considered, quite rightly,
the bricks at the base of multiplicative functions.
\begin{defn}
 For a fixed $x\in\Gamma,$
$a\in A,$ and $v_a\in V_a,$ let
$\mu[x,x a, v_a]:\Gamma\rightarrow \amalg_{b\in A}{V_b}$ be as follows
\begin{itemize}
\item $\mu[x,x a, v_a](y)=0,$ for $y\neq\Gamma(x,xa);$
\item $\mu[x,x a, v_a](xa)=v_a;$
\item $\mu[x,x a, v_a](ybc)=H_{cb}\, \mu[x,x a, v_a](yb),$ if $yb,ybc\in\Gamma(x,xa),$ and
$d(ybc,x)=d(y,x)+2.$
\end{itemize}
Note that $y\G(x,xa)=\G(yx,yxa)$ 
and,  modulo the equivalence relation, one has
$\mu[x,xa,v_a](\inv y \cdot)=\mu[yx,yxa,v_a](\cdot)$.

\end{defn}
Let
${\bf 1}_{\Omega(y)}$, respectively ${\bf 1}_{\Gamma(y)},$
 be the characteristic function of the set
$\Omega(y)$, respectively $\Gamma(y)$.
The {\it multiplicative representation $\pi$} will act on $\mathcal{H}^\infty$
according to the rules
$$
\begin{array}{l}
\pi_\Gamma(y)f(x) = f(\inv y x)\\
\pi_\Om(\1_{\Om(y)})f=\1_{\Gamma(y)}f\;.
\end{array}
$$

Observe that, modulo the equivalence relation, one  has
\begin{equation}\label{??}
\mu[yx,yx a, v_a]~=~\pi(y)\mu[x,xa, v_a]\quad\mbox{ for all $y\in\G$}\;,
\end{equation}
irrespective of whether $|xa|=|x|+1$.
In particular, for $c\in A$ and $w\in V_{c^{-1}}$, one has
\begin{equation}\label{befana}
\mu[c,e,w](x)=\pi(c)\mu[e,\inv{c}, w](x)=\left\{\begin{aligned}
& w,\quad \mbox{if $x=e$},\\
& \sum_{a\neq c}\mu[e,a, H_{a { c^{-1}}} w](x),\quad\mbox{if $x\neq e$}.
\end{aligned}\right.
\end{equation}

Fix now $y\in\G$, choose $N>|y|+1$ and write
$$f=\sum_{|x|=N}\sum_{\atopn{a\in A}{|xa|=|x|+1}}{\mu[x,x a,f(xa)]}$$
as an orthogonal sum of elementary multiplicative functions with disjoint
supports. Since
\begin{equation*}
\pi(y)f=
\sum_{|x|=N}\sum_{\atopn{a\in A}{|xa|=|x|+1}}{\mu[yx,yx a,f(xa)]}
\end{equation*}
and the sets $\G(yx,yxa)$ are also all disjoint
\eqref{??} says that $\pi$ is unitary.

Finally, since $C(\Omega)$ is generated by the functions
$\{{\bf 1}_{\Omega(x)},\,x\in\Gamma\},$
it is easy to verify that the pair $(\pi_\Gamma,\pi_\Om)$ extends to a boundary
representation of $\Gamma$ on $\mathcal{H}$
 that we shall simply denote by $\pi$.


\section{Proof of Theorem 2}

\begin{reftheorem}{main}
Let $\pi$ be the multiplicative representation constructed from
an irreducible, normalized matrix system
 $(V_a, H_{ba},B_a)$.
Assume  that for all $v\in \mcH^\infty$
\begin{equation}\label{introbis}
\text{either}\quad\|\phi_\eps^v\|_2^2\simeq\frac1{\varepsilon^2}\quad \text{or}\quad
\|\phi_\eps^v\|_2^2\simeq\frac1{\varepsilon^3}\quad \text{hold as}\quad \eps\rightarrow 0,
\end{equation}
 then
\begin{itemize}
\item There is only one boundary realization of $\pi$,
\item $\pi$ is irreducible as a $\Gamma$-representation.
\end{itemize}
\end{reftheorem}

\begin{proof}
Since $\pi$ is irreducible  as a representation of $\G\ltimes C(\Om)$
(\cite{K-S2} Theorem 5.3) by Corollaries
 \ref{chiave} and \ref{sistemiinequiv}  we only have to prove that
no nonzero $g\in\mathcal{H}$ satisfies the Good Vector Bound.
The structure of the proof is, as in  Lemma 1.6 of \cite{K-S2},
by contradiction.
There are however crucial not straightforward differences in the choice of the
main objects, due to the vector setting .

Assume that there exists a nonzero $g\in\mathcal{H},$
and a constant $C$ depending only on $g$, such that for every
$f\in\mathcal{H},$ and every positive integer $n$ one has
\begin{itemize}
\item[(GVB)]\hspace{2cm}
$\displaystyle\sum_{|x|=n}{|<f,\pi(x)g> |^2}\leq C\| f\|^2.$
\end{itemize}
By linearity we may assume that $\| g\|=1$.

We shall allow the constant $C$ to change from line to line, keeping in mind
that it will always be {\it independent on $n$}.

The condition (GVB) implies that
\begin{equation}\label{gvbbis}
\limsup_{\varepsilon\rightarrow 0^+}\quad\eps\;
\sum_{x\in\Gamma}{|<f,\pi(x)g>|^2 e^{-\varepsilon |x|}}= C\|f\|^2
<+\infty.
\end{equation}
Fix  $a\in A$, $v_a\in V_a$ such that $B_a(v_a,v_a)=1$.

Let
$f=\mu[e,a,v_a],\quad f_y=\1_{\Gamma(y)}f=\pi(\1_{\Omega(y)})f,\; $
and observe that
\begin{equation}\label{sommeortogonali}
f=\sum_{|y|=n}f_y,\quad \text{and } \quad\norm{f}^2=\sum_{|y|=n}\norm{f_y}^2=1,
\end{equation}
since \eqref{sommeortogonali} is an orthogonal sum.

For this given $f$ we shall compute \eqref{gvbbis}. Since a finite number of terms gives a zero contribute to the lim sup, we may rewrite
$$
C=\limsup_{\varepsilon\rightarrow 0^+}
\varepsilon\sum_{|y|=n}\sum_{x\in\Gamma(y)}{|<f,\pi(x)g>|^2 e^{-\varepsilon |x|}}
=\limsup_{\varepsilon\rightarrow 0^+}C^n_f(\varepsilon).
$$
For each $x\in\G(y)$ write $|<f,\pi(x)g>|^2 =|<f_y+f-f_y,\pi(x)g>|^2$ to get
\begin{eqnarray*}
C_f^n(\varepsilon)&=&\varepsilon\sum_{|y|=n}\sum_{x\in\Gamma(y)}
{|<f_y,\pi(x)g>|^2 e^{-\varepsilon |x|}}\\ \\
& &+\varepsilon\sum_{|y|=n}\sum_{x\in\Gamma(y)}{2{\Re}e(<f_y,\pi(x)g>
\overline{<f-f_y,\pi(x)g>}) e^{-\varepsilon |x|}}\\ \\
& &+\varepsilon\sum_{|y|=n}\sum_{x\in\Gamma(y)}{|<f-f_y,\pi(x)g>|^2
 e^{-\varepsilon |x|}}\\ \\
 &=&C_{1,f}^n(\varepsilon)+2C_{2,f}^n(\varepsilon)+C_{3,f}^n(\varepsilon).
\end{eqnarray*}
By Cauchy-Schwarz inequality $|C_{2,f}^n(\eps)|\leq
|C_{1,f}^n(\eps)C_{3,f}^n(\eps)|^\frac12$, moreover,
condition \ref{GVB} and \eqref{sommeortogonali}
imply  that
\begin{eqnarray*}
\limsup_{\varepsilon\rightarrow 0^+}C_{1,f}^n(\varepsilon) &=&
\limsup_{\varepsilon\rightarrow0^+}\;\varepsilon\sum_{|y|=n}\sum_{x\in\Gamma(y)}
{|<f_y,\pi(x)g>|^2 e^{-\varepsilon |x|}}\\
&\leq &
\sum_{|y|=n}\limsup_{\eps\to0^+}\;\varepsilon\;\sum_{x\in\G}|< f_y,\pi(x)g
>|^2e^{-\eps|x|}\\
&\leq&
C\sum_{|y|=n}\|f_y\|^2= C\|f\|^2=C\;.
\end{eqnarray*}
For sufficiently small $\eps$ one has $|C_{1,f}|^n(\eps)\leq 2C,$
so that
 $$0\leq C_{3,f}^n(\eps)\leq C^n_f(\eps)-2C_{2,f}^n(\eps)
\leq C^n_f(\eps)+2\, (2C \,C_{3,f}^n(\eps))^\frac12.
$$
Hence, if $\limsup_{\eps\to0^+}C^n_f(\eps)$ is finite, the following
\begin{equation}\label{a3}
\limsup_{\varepsilon\rightarrow 0^+}C_{3,f}^n(\varepsilon)
\end{equation}
is also finite, say
$\limsup_{\varepsilon\rightarrow 0^+}C_{3,f}^n(\varepsilon)\leq C$.

In the next section we shall prove (see Corollary \ref{coro3.4}) that, since
$\mathrm{supp}(f-f_y)\subset \Gamma\setminus\Gamma(y)$ the above $\limsup$
\eqref{a3} is actually a limit, more precisely we shall prove that
there exists a tuple $\widehat{B}_c$ of strictly positive definite forms
on $\widehat{V}_c$, the space of antilinear functionals on $V_{c^{-1}}$,
such that
\begin{eqnarray}
\lim_{\varepsilon\rightarrow 0^+}C_{3,f}^n(\varepsilon)
 &=&
\sum_{c\in A}\sum_{\atopn{|z|=n-1}{|zc|=n}}\lim_{\varepsilon\rightarrow 0^+}\varepsilon
\sum_{x\in\Gamma(zc)}{|<f-f_{zc},\pi(x)g>|^2 e^{-\varepsilon |x|}}
\nonumber\\ \nonumber\\
&=&
\sum_{c\in A}\sum_{\atopn{|z|=n-1}{|zc|=n}}
\frac{1}{k_0}\,\widehat{B}_c(S\pi(z^{-1})(f-f_{zc}),S\pi(z^{-1})(f-f_{zc})),
\label{bc}
\end{eqnarray}
where
$S\pi(z^{-1})(f-f_{zc})$
 is the antilinear
functional on ${V}_{c^{-1}}$
defined by the following rule
$$\langle {w}, S\pi(z^{-1})(f-f_{zc})
\rangle
=<\pi(z^{-1})(f-f_{zc}),\mu[c,e,w]>,$$
for every $w\in{V}_{c^{-1}}$ and $\mu[c,e,w]$ is as in \eqref{befana}.
Hence $S\pi(z^{-1})(f-f_{zc})$ will be identified
with an element of $\widehat{V}_c=\overline{V}_{c^{-1}}'$
(the reader may refer to Subsection \ref{stwin} and Definition \ref{twin}).

For the moment we shall assume valid \eqref{bc} and we proceed
with the calculations.
Let $u\in \widehat{V}_c=\overline{V}_{c^{-1}}'$ and let
$\|u\|_\infty =\displaystyle{\sup_{\atopn{v \in {V}_{c^{-1}}}{B_{c^{-1}}(v,v)=1}}{|\langle{v},u\rangle|}}$
denote its
norm as an antilinear functional on $V_{c^{-1}}$.
Since $u\in \widehat{V}_c\mapsto\widehat{B}_c(u,u)^{1/2}$
also
 defines a norm on the same finite dimensional Banach space
  $\widehat{V}_c,$
%
there exists a positive constant $K_c$, depending only on $c$ and on
$\widehat{B}_c$,
such that for any
$u\in  \widehat{V}_c$ and for any unit vector $w\in V_{c^{-1}}$
$$K_c |\langle{w},u\rangle|\leq K_c\|u\|_\infty \leq
 \widehat{B}_c(u,u)^{1/2}\;.$$

This yields a below estimate for each term in the sum \eqref{bc} above:
for any $c\in A$,   $w\in V_{c^{-1}},$ {$B_{c^{-1}}(w,w)=1$} and
$z$ such that  $|zc|=|z|+1,$
one has
\begin{eqnarray}\label{bbc}
 & &\widehat{B}_c(S\pi(z^{-1})(f-f_{zc}),S\pi(z^{-1})(f-f_{zc})) \\
 &&\hspace{4cm}
 \geq k_c\,|<\pi(z^{-1})(f-f_{zc}),\mu[c,e,w]>|^2 \nonumber
 \end{eqnarray}
where $k_c=K_c^2>0$.
Putting together \eqref{a3}, \eqref{bc}, and  \eqref{bbc} we get the following

{\bf Claim.}

For every $c\in A$ there exists a positive constant
 $k_c$ such that, for any $n\in\N$ and
for any choice of vectors
$w_{c^{-1}}\in V_{c^{-1}},$ {$B_{c^{-1}}(w_{c^{-1}},w_{c^{-1}})=1$},
$$
B_0^n(f):=\sum_{c\in A}\sum_{\atopn{|z|=n-1}{|zc|=n}}
k_c|<\pi(z^{-1})(f-f_{zc}),\mu[c,e,w_{c^{-1}}]>|^2
$$
 is uniformly bounded in $n$.
\vspace{0.5cm}\\
We fix unit vectors $w_{c^{-1}}\in V_{c^{-1}}$,
with $w_a=v_a$ and we write, as we did for $C^n_f(\eps)$,  $B_0^n(f)$
as the sum of three terms:
$$
B_0^n(f)=B_{1,f}^n-2B_{2,f}^n+B_{3,f}^n
$$
where
\begin{eqnarray*}
& &\!\!\!\!B_{1,f}^n  =\!\!
\sum_{c\in A}\sum_{\atopn{|z|=n-1}{|zc|=n}}
k_c|<f_{zc},\pi(z)\mu[c,e,w_{c^{-1}}]>|^2\\ \\
& &\!\!\!\!B_{2,f}^n = \\
& &\!\!\!\!
\sum_{c\in A}\sum_{\atopn{|z|=n-1}{|zc|=n}}\!\!\!
k_c\, 2{\Re}e(<f,\pi(z)\mu[c,e,w_{c^{-1}}]>\!
\overline{<f_{zc},\pi(z)\mu[c,e,w_{c^{-1}}]>})\\ \\
& &\!\!\!\!B_{3,f}^n = \!\!\!\!
\sum_{c\in A}\sum_{\atopn{|z|=n-1}{|zc|=n}}
k_c|<f,\pi(z)\mu[c,e,w_{c^{-1}}]>|^2.
\end{eqnarray*}

We use again \eqref{sommeortogonali} to estimate $B_{1,f}^n$:
\begin{eqnarray*}
B_{1,f}^n
&=&
\sum_{c\in A}\sum_{\atopn{|z|=n-1}{|zc|=n}}
k_c|<f_{zc},\pi(z)\mu[c,e,w_{c^{-1}}]>|^2\\ \\
&\leq&
 \sum_{c\in A}k_c\sum_{\atopn{|z|=n-1}{|zc|=n}}\|f_{zc}\|^2
\|\pi(z)\mu[c,e,w_{c^{-1}}]\|^2\\ \\
&\leq&
k_0\sum_{c\in A}\sum_{\atopn{|z|=n-1}{|zc|=n}}\|f_{zc}\|^2=k_0\|f\|^2,
\end{eqnarray*}
where $k_0=\max_{c\in A}k_c$.

Arguing as before, we may conclude that
  there exists
 a constant $C_2,$
possibly depending on
$a$, but independent on $n$ such that
 $$B_{3,f}^n \leq C_2.$$


The final step will consist in  showing that the uniform boundedness in $n$ of
$$B_{3,f}^n=\sum_{|z|=n-1}\sum_{\atopn{c\in A} {|zc|=n}}
k_c|<f,\pi(z)\mu[c,e,w_{c^{-1}}]>|^2$$
yields a contradiction.

Recall that $f=\mu[e,a,v_a]$ and
call $M=M(a)$ the upper bound such that, for all $n$,
$$B_{3,f}^n=\sum_{|z|=n-1}\sum_{\atopn{c\in A} {|zc|=n}}
k_c|<f,\pi(z)\mu[c,e,w_{c^{-1}}]>|^2\leq M.$$
Setting $ k_1=\min_{c\in A}{k_c}>0,$ this yields
$$
\sum_{|z|=n-1}\sum_{\atopn{c\in A} {|zc|=n}}|<f,\pi(z)\mu[c,e,w_{c^{-1}}]>|^2\leq \frac{M}{k_1}=M_1.
$$

Write
\begin{eqnarray*}
\sum_{|x|=n}|<f,\pi(x)f\!>|^2\!\!\!&=&\!\!\!
\sum_{\atopn{|x|=n}{x\in\Gamma(a)}}|<f,\pi(x)f\!>|^2+
\sum_{\atopn{|x|=n}{x\notin\Gamma(a)}}|<f,\pi(x)f\!>|^2\\
\!\!\!&=&\!\!\! \text{(I)}+\text{(II)}
\end{eqnarray*}
For  (I) we get
\begin{eqnarray*}
\sum_{\atopn{|x|=n}{x\in\Gamma(a)}}|<f,\pi(x)f>|^2\!\!\!
&=&\!\!\!
\sum_{\atopn{|z|=n-1}{ z\notin\Gamma(a^{-1})}}|<\pi(az)f,f>|^2\\
&=&
\sum_{\atopn{|z|=n-1}{ z\notin\widetilde{\Gamma}(a)}}|<f,\pi(z\,a^{-1})f>|^2\\
&=&
\sum_{\atopn{|z|=n-1}{ z\notin\widetilde{\Gamma}(a)}}|<\mu[e,a,v_a],\pi(z)\mu[a^{-1},e,v_a]>|^2\\
&\leq&\!\!
\sum_{|z|=n-1}\sum_{\atopn{c\in A} {|zc|=n}}|<f,\pi(z)\mu[c,e,w_{c^{-1}}]>|^2\!\leq M_1.
\end{eqnarray*}

For  (II) we get
\begin{eqnarray*}
\sum_{\atopn{|x|=n}{x\notin\Gamma(a)}}|<f,\pi(x)f>|^2
&=&
\sum_{\atopn{\atopn{|x|=n}{ x\notin\Gamma(a)}}{|xa|=n+1}}
|<\mu[e,a,v_a],\pi(x)\mu[e,a,v_a]>|^2\\
& &+\sum_{\atopn{\atopn{|x|=n}{x\notin\Gamma(a)}}{ |xa|=n-1}}
|<\mu[e,a,v_a],\pi(x)\mu[e,a,v_a]>|^2.
\end{eqnarray*}
The first sum in (II) is equal to zero since $x\notin\Gamma(a)$
and $xa$ does not reduce.
The second sum is
\begin{eqnarray*}
& &\sum_{\atopn{\atopn{|x|=n}{x\notin\Gamma(a)}}{|xa|=n-1}} |<\mu[e,a,v_a],\pi(x)\mu[e,a,v_a]>|^2\\
& =&
\sum_{\atopn{\atopn{|z|=n-1}{z\notin\Gamma(a)}}{|za^{-1}|=n}}
|<\mu[e,a,v_a],\pi(z)\mu[a^{-1},e,v_a]>|^2\\
&\leq &
\sum_{\atopn{|z|=n-1}{z\notin\Gamma(a)}}
\sum_{\atopn{c\in A}{ |zc|=n}}
|<\mu[e,a,v_a],\pi(z)\mu[c,e,w_{c^-1}]>|^2\leq M_1.
\end{eqnarray*}

Hence there exists a constant $C_3=2M_1>0$ such that
$$\sum_{|x|=n}{|<\mu[e,a,v_a],\pi(x)\mu[e,a,v_a]>|^2}\leq C_3,\quad \mathrm{for \; any}\; n,$$
 and we get a  contradiction since the hypothesis \eqref{introbis} on
$\|\phi_\eps^v\|_2^2$, with the choice $v=\mu[e,a,v_a],$ yields
for any $\varepsilon>0$ that either $\varepsilon^{-2}$ or $\varepsilon^{-3}$
is bounded by
$$ \sum_{n=0}^{+\infty}{\sum_{|x|=n}{\!|\!<\!\mu[e,a,v_a],\pi(x)\mu[e,a,v_a]\!>\!\!|^2}
 \,e^{-2\varepsilon n}}\!\leq\!
C_3
\sum_{n=0}^{+\infty}{ e^{-2\varepsilon n}}
\simeq \frac1\eps\;.
$$
\end{proof}

\section{Computation of Matrix Coefficients}

This section is devoted to the computation of the quantities
$$
\|\phi_\varepsilon^{v_a,v_b}\|^2_2=
\sum_{n=0}^{+\infty}{\sum_{|x|=n}{\!|\!<\!\mu[e,a,v_a],\pi(x)\mu[e,b,v_b]\!>\!\!|^2}
 \,e^{-2\varepsilon n}}.
$$
In Theorem 1 we shall show that these quantities have always polinomial growth
with respect to $1/\eps$.
Finally, in Lemma~\ref{lem3.3} we shall provide the
 exact asymptotics for
$\|\phi_\varepsilon^{v_a,v_b}\|^2_2$
 which are needed to prove Theorem~\ref{main}.

\subsection{The Twin of the System}\label{stwin}

Throughout the whole paper we shall use the following
notation.

If $V_1$ and $V_2$ are finite dimensional complex vector spaces,
$\mathscr{L}(V_1,V_2)$ is the space of linear maps $T:V_1\rightarrow V_2.$
 The dual space of $V$ is $V'=\mathscr{L}(V,\C)$
and the duality is given, as usual,  by $\langle v,v'\rangle$.
If $T\in\mathscr{L}(V_1,V_2),$ then the dual map is
${T'}\in\mathscr{L}(V_2',V_1')$.

 $\overline{V}$ is the complex conjugate vector space of $V$, i.e. the set
$V$ with the same addition operation, but with an altered multiplication
$$\lambda\overline{\cdot} v=\overline{\lambda}v,\quad \lambda\in\C,\;v\in V.$$
A  map  $f:V\rightarrow\C$ is called ``antilinear" if
$f:\overline{V}\rightarrow\C,$ is linear.


 The space of antilinear functionals on $V$ is denoted by $V^{\ast}=\overline{V}'.$

We recall some identifications that will be used repeatedly in this paper.

Given finite dimensional complex vector spaces $V_1$ and $V_2,$
for any $v_1\in V_1$, $v_2\in V_2$ and $f\in V_2'$ we consider the map
$$v_1\otimes v_2:V_2'\rightarrow V_1,\quad
(v_1\otimes v_2)(f)=f(v_2)\, v_1=\,\langle v_2,f\rangle\, v_1\;.$$
 Then  $v_1\otimes v_2\in\mathscr{L}(V_2',V_1).$
 By linearity the above extends to an isomorphism
$V_1\otimes V_2\cong\mathscr{L}(V_2',V_1).$

It follows that, given $T_1\in\mathscr{L}(V_1,V_3)$ and
$T_2\in\mathscr{L}(V_2,V_4)$, the map $$T_1\otimes T_2:V_1\otimes V_2\rightarrow V_3\otimes V_4$$
corresponds to the operator
$$
 \mathscr{L}(V_2',V_1)\rightarrow \mathscr{L}(V_4',V_3),
\quad S\mapsto T_1\,S\,T_2'.
$$
So we shall write
$$
(T_1\otimes T_2)S=T_1S T_2'.
$$
%


The duality isomorphism
$${\mathcal L}:\mathscr{L}(V,V^{\ast})\rightarrow\mathscr{L}(V^{\ast},V)'$$
defines a bilinear form which can be written explicitly by means of the trace function
$$B: \mathscr{L}(V,V^{\ast})\times\mathscr{L}(V^{\ast},V)\rightarrow \C,$$
\begin{equation}\label{otto}
B(T,S):=({\mathcal L}(T))(S)=\mathrm{tr}(TS)=\mathrm{tr}(ST).
\end{equation}
In particular, when
$T=(\bar v_1'\otimes v_2')$ and $S=(v_3\otimes\bar v_4)$
($v_3,v_4\in V$, $v_1',v'_2\in V'$) are elementary tensors, one has
\begin{equation}\label{nove}
\mathrm{tr}\left( (\bar v_1'\otimes v_2')(v_3\otimes\bar v_4)\right)=
\overline{\langle v_4,v'_1}\rangle\langle v_3,v_2'\rangle\;.
\end{equation}
In this case we shall omit the $\tr$ in front and we shall write, for
brevity, $(\bar v_1'\otimes v_2')(v_3\otimes\bar v_4)$.

Positive definite sesquilinear forms $B_a$
on the space $V_a$ are identified with
maps $B_a \in\mathscr{L}(V_a,V_a^{\ast}),$ via the linear extension
$$(B_a (\lambda v))(\mu  w)=B_a(\lambda v, \mu w)=\lambda \overline{\mu}B_a(v,w)=
\lambda \overline{\mu}B_a(v)(w)$$
for any $v,w\in V_a,$ and $\lambda,\mu\in\C$.
Under this identification one also has $B^*_a=B_a$.

For every $a\in A$ set
$$
\widehat{V}_a:=V^{\ast}_{a^{-1}}=\overline{V}_{a^{-1}}'.
$$
Given a  matrix system $(V_a, H_{ba}),$  $H_{ba}$ induces an obvious linear
 map on the space of antilinear functionals on $V_a$, $V_a^{\ast},$  by
$H^{\ast}_{ba}:V_b^{\ast}\rightarrow V_a^{\ast},$
$H^{\ast}_{ba}(f)=f\circ H_{ba},$ and also maps
$$
\widehat{H}_{ba}:=H^{\ast}_{a^{-1}b^{-1}}:\widehat{V}_a\rightarrow\widehat{V}_b,
$$

Hence the matrix system $(V_a, H_{ba})$ induces
another matrix system $(\widehat{V}_a, \widehat{H}_{ba}),$ which is irreducible if
so is $(V_a, H_{ba}).$
\begin{defn}
If for all $a\in A$, the bilinear form
$B_a$ is strictly positive definite,  we shall say for brevity that
{\it the tuple $(B)_a$ is positive definite}. If the $B_a$
are all positive semidefinite and there exists an index $a\in A$ such that
$B_a$ {\it is not positive definite} we shall say that the tuple is positive
semidefinite.
Analogously we define  negative definite tuples.
\end{defn}
\begin{prop}\label{der}
Assume that  $(V_a, H_{ba}, B_a)$ is an irreducible  system
with inner product. Then there exists a unique (up to multiple
scalars) positive definite tuple $(\widehat{B}_a),$
$\widehat{B}_a:\widehat{V}_a\rightarrow\widehat{V}_a^{\ast}$
on $\widehat{V}_a$
such that the matrix system
$(\widehat{V}_a,\widehat{H}_{ba},\widehat{B}_a)$ is
an irreducible system with inner products.
\end{prop}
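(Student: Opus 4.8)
The plan is to realise the compatibility relation \eqref{compatibility} as a fixed-point equation for a positive operator and to run a Perron--Frobenius argument, exactly as in the proof of Theorem 4.9 of \cite{K-S3}, but transported to the twin. On the real cone of tuples $(C_a)$ of positive semidefinite forms $C_a\colon V_a\to V_a^{\ast}$ introduce the transfer operator
\[
\mathcal{T}(C)_a=\sum_{b\in A}H_{ba}^{\ast}\,C_b\,H_{ba},
\]
so that \eqref{compatibility} reads $\mathcal{T}(B)=B$. Since $\mathcal{T}$ preserves the cone and, because $(V_a,H_{ba})$ is irreducible, admits no nontrivial invariant face, Perron--Frobenius produces a unique positive eigendirection, lying in the interior of the cone (hence \emph{strictly} positive definite), whose eigenvalue is the spectral radius $\rho(\mathcal{T})$; being ``normalized'' means precisely that the $H_{ba}$ have been rescaled so that $\rho(\mathcal{T})=1$ and $B$ is this eigenvector. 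The same discussion applies verbatim to the twin operator $\widehat{\mathcal{T}}(D)_a=\sum_b \widehat{H}_{ba}^{\ast}\,D_b\,\widehat{H}_{ba}$ on positive forms on $\widehat V_a$, using that $(\widehat V_a,\widehat H_{ba})$ is irreducible, and yields a unique strictly positive fixed direction precisely when $\rho(\widehat{\mathcal{T}})=1$. Thus the whole statement reduces to the single numerical fact $\rho(\widehat{\mathcal{T}})=\rho(\mathcal{T})=1$; note that the naive guess $\widehat B_a=B_{a^{-1}}^{-1}$ does \emph{not} satisfy compatibility, so a genuinely new eigenvector must be produced.

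The heart of the argument is therefore to show $\rho(\widehat{\mathcal{T}})=\rho(\mathcal{T})$, which I would do by identifying $\widehat{\mathcal{T}}$ with the adjoint of $\mathcal{T}$ for the trace pairing \eqref{otto}. Pairing a form-tuple $(C_a)$ with an operator-tuple $(S_a)$, $S_a\colon V_a^{\ast}\to V_a$, via $\langle (C_a),(S_a)\rangle=\sum_a \mathrm{tr}(C_aS_a)$ and using cyclicity $\mathrm{tr}(H_{ba}^{\ast}C_bH_{ba}S_a)=\mathrm{tr}(C_b\,H_{ba}S_aH_{ba}^{\ast})$, one reads off
\[
\mathcal{T}^{\dagger}(S)_b=\sum_{a\in A}H_{ba}\,S_a\,H_{ba}^{\ast}.
\]
Now unwind the twin: since $\widehat H_{ba}=H^{\ast}_{a^{-1}b^{-1}}$, its double dual is $H_{a^{-1}b^{-1}}$, and under the identifications of Subsection~\ref{stwin} a positive form $D_a$ on $\widehat V_a=V_{a^{-1}}^{\ast}$ is an object of the type $S_{a^{-1}}\colon V_{a^{-1}}^{\ast}\to V_{a^{-1}}$. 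Setting $c=a^{-1}$, $d=b^{-1}$ and using that $A$ is symmetric, one obtains $\widehat{\mathcal{T}}(D)_a\cong\sum_{d}H_{cd}\,S_d\,H_{cd}^{\ast}=\mathcal{T}^{\dagger}(S)_c$, so $\widehat{\mathcal{T}}$ is just $\mathcal{T}^{\dagger}$ reindexed by $a\mapsto a^{-1}$. Since an operator and its adjoint (the transpose of its matrix in dual bases) share the same spectrum, $\rho(\widehat{\mathcal{T}})=\rho(\mathcal{T}^{\dagger})=\rho(\mathcal{T})=1$.

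Assembling the pieces: Perron--Frobenius applied to the irreducible operator $\widehat{\mathcal{T}}$ gives a one-dimensional cone of fixed positive tuples whose generator $(\widehat B_a)$ is strictly positive definite and satisfies $\widehat{\mathcal{T}}(\widehat B)=\widehat B$, i.e. the compatibility relation making $(\widehat V_a,\widehat H_{ba},\widehat B_a)$ an irreducible system with inner products; uniqueness up to a positive scalar is the one-dimensionality. Alternatively, uniqueness follows directly from Remark~\ref{inequivalenti}, since two compatible positive tuples would give a nonzero self-map of the irreducible twin, necessarily a scalar.

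The main obstacle I anticipate is bookkeeping rather than conceptual: correctly tracking the chain of identifications $\overline{V}'$, the double duals $(V^{\ast})^{\ast}\cong V$, and the passage $H\mapsto H^{\ast}\mapsto H^{\ast\ast}$, so that $\widehat{\mathcal{T}}$ becomes exactly $\mathcal{T}^{\dagger}$ and not some conjugated or twisted variant, and checking that irreducibility of the \emph{system} is precisely the hypothesis that makes the Perron--Frobenius operator irreducible on the cone of positive forms.
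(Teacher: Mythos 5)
Your proposal is correct and takes essentially the same route as the paper: the paper also writes compatibility as the fixed-point equation $(TB)_a=\sum_b H^{\ast}_{ba}B_bH_{ba}=B_a$ for the transfer matrix $T=(H^{\ast}_{ba}\otimes H'_{ba})_{a,b}$, observes that the twin transfer matrix $\widehat{T}=(H_{a^{-1}b^{-1}}\otimes\overline{H}_{a^{-1}b^{-1}})_{a,b}$ is the transpose $T'$ up to the row/column permutation $a\mapsto a^{-1}$ (your ``adjoint for the trace pairing, reindexed''), hence has the same spectrum, and then invokes the Perron--Frobenius-type result (Corollary 4.8 of \cite{K-S3}) for the irreducible twin to obtain the essentially unique strictly positive definite eigentuple.
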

\begin{proof}
Let $\mcV=\oplus_{a\in A}\;V_a^\ast\otimes V_a'$ and define
$T:\mcV\to\mcV$ by the rule
$$
(TC)_a=\sum_{b\in A} T_{ab}C_b,\quad T_{ab}=H^\ast_{ba}\otimes H_{ba}'\;.
$$
Since every $B_a$ may be regarded as an element of $V_a^\ast\otimes V_a$
the compatibility condition \eqref{compatibility} can be rewritten as
$$
(TB)_a=
\sum_{b\in A} H^\ast_{ba}\otimes H_{ba}'B_b
=\sum_{b\in A}H^\ast_{ba}B_bH_{ba}=B_a.
$$
The above equation says that the tuple $(B_a)$
is a right eigenvector for
the matrix $T=(H^\ast_{ba}\otimes H_{ba}')_{a,b}$
corresponding to eigenvalue $1$.
Hence $1$ is an  eigenvalue  for the transpose matrix
$T'=\left({{H}}_{ab}\otimes \overline{{H}}_{ab}\right)_{a,b}$ too.
To say that $\widehat{B}_a$ is a compatible tuple for
$(\widehat{V}_a,\widehat{H}_{ba})$ is equivalent to say that
$\widehat{B}_a$ is a right eigenvector for the matrix
$\widehat{T}=\left(\widehat{H}_{ba}^{\ast}\otimes {\widehat{H}}_{ba}'\right)_{a,b}
\!=\!\left({H}_{a^{-1}b^{-1}}\otimes \overline{{H}}_{a^{-1}b^{-1}}\right)_{a,b}
$.

But the last matrix is obtained from
$$
\left({{H}}_{ab}\otimes\overline{{H}}_{ab}\right)_{a,b}=T'$$
by interchanges of rows and columns, so $\widehat{T}$ and $T$ have the same
eigenvalues.
Since the matrix system $({V}_a,{H}_{ba})$ is irreducible, then
 $(\widehat{V}_a,\widehat{H}_{ba})$ is irreducible too.
Corollary 4.8 of \cite{K-S3} ensures that there exists an essentially unique
eigentuple $(\widehat{B}_a)$ of strictly  positive definite forms satisfying
\begin{equation*}
\w{B}_a=\sum_{b\in A}\w{H}_{ba}^\ast \w{B}_b\w{H}_{ba}\period
\end{equation*}
\end{proof}

\begin{defn}\label{twin}
We shall call
$(\widehat{V}_a,\widehat{H}_{ba},\widehat{B}_a)$ the
{\it twin system} induced by
$({V}_a,{H}_{ba},{B}_a)$.
\end{defn}

\begin{defn}\label{defE}
For any $a,b\in A,$ we define  maps $E_{ab}:V_b\rightarrow \widehat{V}_a$ by
\begin{equation}\label{ee}
E_{ab}=\sum_{\atopn{c\in A}{ c\neq a, b^{-1}}}{{H}^{\ast}_{c a^{-1}}{B}_c{H}_{cb}}=
\sum_{\atopn{c\in A}{ c\neq a, b^{-1}}}{\widehat{H}_{a c^{-1}}{B}_c{H}_{cb}},
\end{equation}
where $E_{ab}=0$ whenever $ab=e$.

We use also the following notation, for vectors $v_a\in V_a$ and $v_b\in V_b,$
$$E_{a_1 e}:=H_{a a_1^{-1}}^{\ast}B_{a}(v_{a})\in \widehat{V}_{a_1},$$
$$E_{e a_J}:=\overline{H_{b a_J}^{\ast}B_{b}(v_b)}\in V_{a_J}'.$$
\end{defn}

It holds $E_{ab}^{\ast}=E_{b^{-1}a^{-1}}.$ Indeed by taking adjoint
$$E_{ab}^{\ast}=\sum_{\atopn{c\in A}{ c\neq a, b^{-1}}}{{H}^{\ast}_{c b}{B}_c^{\ast}{H}_{ca^{-1}}}=
\sum_{\atopn{c\in A}{ c\neq a, b^{-1}}}{{H}^{\ast}_{c b}{B}_c{H}_{c a^{-1}}}=E_{b^{-1}a^{-1}}.$$

Let $a,b,c,d\in A,$  $v_a\in V_a$ and $v_b\in V_b,$ $J\geq 1.$
We look for a transition matrix which rules the expression
$$
\sum_{\atopn{x\in\Gamma(c)\cap\widetilde{\Gamma}(d)}{|x|=J}}
{|<\mu[e,a,v_a],\pi(x)\mu[e,b,v_b]>|^2}.
$$
It turns out that this matrix
$\mathcal{D}=( D)_{i,j=1\dots 4}$
is a $4\times 4$ block triangular matrix
 obtained
as tensor product of the matrix
\begin{equation}\label{matrixtilde}
{\mathcal{\widetilde D}}=\left(\begin{array}{cc}
(\widehat{H}_{ab})_{a,b} &(E_{ab})_{a,b}\\ \\
0&({H}_{ab})_{a,b}
\end{array}\right)
\end{equation}
by its conjugate, i.e. ${\mathcal{D}}={\mathcal{\widetilde D}}\otimes
\overline{\mathcal{\widetilde D}}.$

Note the following notation, used throughout:
\begin{align*}
\delta(ab)&=\delta_e(ab)=\left\{\begin{aligned}
& 1\quad\mbox{if }\; b=\inv a,\\
& 0\quad\mbox{if }\; ab\neq e.
\end{aligned}\right.
\end{align*}

\begin{lem}\label{mum0}
Let $a,b\in A,$  $v_a\in V_a,$ and $v_b\in V_b.$
For $J\geq 1,$ and  a reduced word $x=a_1 a_2\dots a_J$
we have
\begin{eqnarray}
& &<\mu[e,a,v_a],\pi(a_1\dots a_J)\mu[e,b,v_b]>=\nonumber\\ \nonumber\\
&&\!\!\!
\left(\!\!\begin{array}{c}
(v_b\delta(a_Jb)\delta(a_J^{-1}d'))_{d'\in A}\\ \\
(E_{e a_J}\delta(a_J^{-1}d'))_{d'\in A}
\end{array}\!\right)^{\top}
{\!\mathcal{{\widetilde D}}}
\left(\!\begin{array}{c}
(f^1_{J-1}(a_1\dots a_{J-1})\delta(a_{J-1}^{-1}c'))_{c'\in A}\\   \\
(f^2_{J-1}(a_1\dots a_{J-1})\delta(a_{J-1}^{-1}c'))_{c'\in A}
\end{array}\!\right),  \nonumber\\ \label{mum}
\end{eqnarray}
where  the vectors $f^i_{j-1}(a_1\dots a_{J-1}),$ $i=1,2$, $j=1\dots,J-1,$
are defined recursively as follows
\begin{eqnarray*}
f^1_{1}(a_1)&\!\!\!=&\!\!\!E_{a_1 e}\in \widehat{V}_{a_1},\quad
f^2_{1}(a_1)=v_a\delta(a_1 a^{-1})\in V_{a_1},\\
f^1_{j}(a_1\dots a_{j})&\!\!\!=&\!\!\!\widehat{H}_{a_j a_{j-1}}f^1_{j-1}(a_1\dots a_{j-1})+
E_{a_j a_{j-1}}f^2_{j-1}(a_1\dots a_{j-1})\in\widehat{V}_{a_j}\\
 f^2_{j}(a_1\dots a_{j})&\!\!\!=&\!\!\!H_{a_ja_{j-1}}f^2_{j-1}(a_1\dots a_{j-1})\in V_{a_j}.
\end{eqnarray*}

\end{lem}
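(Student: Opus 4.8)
The plan is to prove \eqref{mum} by induction on $J$, after first attaching a concrete meaning to the auxiliary vectors $f^1_j,f^2_j$. Write $g=\mu[e,a,v_a]$ and $w_j=a_1\cdots a_j$ for the prefixes of $x$; by the covariance relation $\pi(a_1\cdots a_J)\mu[e,b,v_b]=\mu[w_J,w_Jb,v_b]$, so the quantity to be computed is the single inner product $<g,\mu[w_J,w_Jb,v_b]>$. I claim that for every $j$
$$f^2_j=g(w_j)\in V_{a_j},\qquad \langle w,f^1_j\rangle=<g,\mu[w_j,w_{j-1},w]>\quad(w\in V_{a_j^{-1}}),$$
that is, $f^2_j$ is the value of the left brick along the geodesic (which vanishes unless $a_1=a$, matching $f^2_1=v_a\delta(a_1a^{-1})$), while $f^1_j$ records the pairing of $g$ against the ``backward'' cone $\mu[w_j,w_{j-1},\cdot\,]$ supported on $\G\setminus\G(w_j)$. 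The recursion $f^2_j=H_{a_ja_{j-1}}f^2_{j-1}$ is then just multiplicative propagation of $g$ along the path, and the whole content of the lemma is the recursion for $f^1_j$.

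The base case $j=1$ is direct: expanding $\mu[a_1,e,w]$ by \eqref{befana} and using that two forward bricks at $e$ pair as $B_a(v_a,v')$ when the directions coincide and as $0$ otherwise (telescoping of \eqref{compatibility}) gives $\langle w,f^1_1\rangle=B_a(v_a,H_{aa_1^{-1}}w)=\langle w,E_{a_1e}\rangle$, matching Definition \ref{defE}. For the inductive step I would decompose the backward cone at $w_j$; applying $\pi(w_{j-1})$ to \eqref{befana} yields
$$\mu[w_j,w_{j-1},w]=\mu[w_{j-1},w_{j-2},H_{a_{j-1}^{-1}a_j^{-1}}w]+\!\!\sum_{c\neq a_j,\,a_{j-1}^{-1}}\!\!\mu[w_{j-1},w_{j-1}c,H_{ca_j^{-1}}w]+\text{(mass at }w_{j-1}).$$
Pairing with $g$, the point mass drops out. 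The first term equals $\langle H_{a_{j-1}^{-1}a_j^{-1}}w,f^1_{j-1}\rangle=\langle w,\w H_{a_ja_{j-1}}f^1_{j-1}\rangle$ by the inductive hypothesis together with $\w H_{a_ja_{j-1}}=H^{\ast}_{a_{j-1}^{-1}a_j^{-1}}$. Each side branch is a forward cone which, when $a=a_1$, sits inside $\G(a)$ where $g$ is itself a forward cone with root value $H_{ca_{j-1}}f^2_{j-1}$; telescoping gives the contribution $B_c(H_{ca_{j-1}}f^2_{j-1},H_{ca_j^{-1}}w)$, whose sum over $c$ is exactly $\langle w,E_{a_ja_{j-1}}f^2_{j-1}\rangle$ by \eqref{ee}. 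Adding the two reproduces $\langle w,f^1_j\rangle$, completing the induction.

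Granting this interpretation, the formula follows by evaluating $<g,\mu[w_J,w_Jb,v_b]>$ in the two regimes separated by $\delta(a_Jb)$. If $b=a_J^{-1}$ then $w_Jb=w_{J-1}$, the inner product is literally $\langle v_b,f^1_J\rangle$, and $E_{ea_J}=0$ since $H_{a_J^{-1}a_J}=0$; if $b\neq a_J^{-1}$ then $\mu[w_J,w_Jb,v_b]$ is a forward cone and telescoping gives $B_b(H_{ba_J}f^2_J,v_b)=\langle f^2_J,E_{ea_J}\rangle$, while $\langle v_b,f^1_J\rangle$ is switched off by $\delta(a_Jb)=0$. In either case $<g,\mu[w_J,w_Jb,v_b]>=\delta(a_Jb)\langle v_b,f^1_J\rangle+\langle f^2_J,E_{ea_J}\rangle$; since $(f^1_J,f^2_J)$ arises from $(f^1_{J-1},f^2_{J-1})$ by the single transition $\w H_{a_Ja_{J-1}},E_{a_Ja_{J-1}},H_{a_Ja_{J-1}}$, this pairing is precisely the block matrix product \eqref{mum}, the Kronecker deltas $\delta(a_J^{-1}d')$ and $\delta(a_{J-1}^{-1}c')$ serving only to select the row $d'=a_J$ and column $c'=a_{J-1}$ of $\mathcal{\widetilde D}$.

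I expect the main obstacle to be the inductive step for $f^1_j$: getting the geometric decomposition of the backward cone right — the exact set of side directions $c\neq a_j,a_{j-1}^{-1}$ and their root values $H_{ca_j^{-1}}w$ produced by \eqref{befana} — and then matching every resulting term against the definitions of $\w H_{a_ja_{j-1}}$ and $E_{a_ja_{j-1}}$ while keeping strict track of the four spaces $V_c,\,V_c',\,V_c^{\ast},\,\w V_c$ and the adjoints and conjugations relating them. The telescoping identity $<g,\mu[w,wc,v']>=B_c(g(wc),v')$, obtained by polarizing \eqref{compatibility}, is what collapses each branch to a single form and must be applied uniformly throughout.
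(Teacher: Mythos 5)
Your argument is correct, and it reaches the recursion by a genuinely different route than the paper. The paper's proof is a single direct computation: it expands the inner product \eqref{inner} at level $N=J$ as a sum over all reduced words $z$ of length $J$, partitions them into the sets $\Lambda_j$ according to the length of the common prefix with $x$, collapses each block using the telescoping identity \eqref{compatibility} and the definition \eqref{ee} of $E_{ab}$, and arrives at the closed form
\begin{equation*}
<\mu[e,a,v_a],\pi(x)\mu[e,b,v_b]>
=\sum_{j=0}^{J}E_{a_{j+1}a_j}\bigl(H(x_j)(v_a),H(\tilde{x}_j^{-1})(v_b)\bigr)\,
\delta(a_1a^{-1})\,\delta(a_Jb),
\end{equation*}
from which $f^1_J,f^2_J$ are \emph{defined} by the explicit formulas \eqref{funoedue} and the recursion is read off by isolating the last summand. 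You never produce this closed form: instead you attach an intrinsic meaning to the auxiliary vectors ($f^2_j=g(w_j)$, and $f^1_j$ as the functional $w\mapsto\,<g,\mu[w_j,w_{j-1},w]>$) and prove the recursion by induction, peeling off one edge of the geodesic at a time via the decomposition \eqref{befana} of the backward cone. The combinatorics processed is the same --- your side branches $c\neq a_j,a_{j-1}^{-1}$ at step $j$ are exactly the paper's $\Lambda_{j-1}$ --- and both arguments rest on the same two identities (telescoping of \eqref{compatibility} and \eqref{ee}), but yours localizes the bookkeeping to a single transition and explains \emph{why} the matrix $\widetilde{\mathcal{D}}$ must appear, whereas the paper's version yields the branch-point formula above as a useful by-product. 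I checked your base case, your inductive step (including the identification $\widehat{H}_{a_ja_{j-1}}=H^{\ast}_{a_{j-1}^{-1}a_j^{-1}}$ and the match of $\sum_{c\neq a_j,a_{j-1}^{-1}}H^{\ast}_{ca_j^{-1}}B_cH_{ca_{j-1}}$ with $E_{a_ja_{j-1}}$), and the final assembly in the two regimes separated by $\delta(a_Jb)$ (where $E_{ea_J}=0$ when $b=a_J^{-1}$ because $H_{a_J^{-1}a_J}=0$); all of these are sound, and the deltas $\delta(a_J^{-1}d')$, $\delta(a_{J-1}^{-1}c')$ indeed just select the $(a_J,a_{J-1})$ entry of each block of $\widetilde{\mathcal{D}}$.
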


\begin{thm}\label{mumu0}
Let $a,b,c,d\in A,$  $v_a\in V_a,$ and $v_b\in V_b.$ Then
\begin{equation*}
\sum_{\atopn{x\in\Gamma(c)\cap\widetilde{\Gamma}(d)}{| x| =J}}
{|<\mu[e,a,v_a],\pi(x)\mu[e,b,v_b]>|^2}=R(d)\,{\mathcal D}^{J-1}\, S(c),
\end{equation*}
where $R(d)$ is the row vector obtained as tensor product of the vector on the left
side of \eqref{mum} by its conjugate, i.e.
\begin{equation}\label{erre}
R(d)=
\left(\begin{array}{l}
((v_{b}\otimes \overline{v_{b}})\, \delta{(db)} \,\delta(d^{-1} d'))_{d'\in A}\\ \\
((E_{e d}\otimes \overline{v_{b}})\, \delta{(db)}\, \delta(d^{-1} d'))_{d'\in A}\\ \\
((v_{b}\otimes \overline{E_{e d}}) \,\delta{(db)}\, \delta(d^{-1} d'))_{d'\in A}\\ \\
((E_{e d}\otimes \overline{E_{e d}}) \,\delta(d^{-1} d'))_{d'\in A}
\end{array}\right)^{\top},
\end{equation}
and $S(c)$ is the column vector defined as tensor product of the vector on the
 right of \eqref{mum}, for $J=2,$ by its conjugate, i.e.
\begin{equation}\label{esse}
S(c)=
\left(\begin{array}{l}
((E_{c e}\otimes \overline{E_{c e}})\,\delta(c^{-1}c'))_{c'\in A}\\ \\
((v_{a}\otimes \overline{E_{c e}})\,\delta(c^{-1}a)\,\delta(c^{-1}c'))_{c'\in A}\\ \\
((E_{c e}\otimes \overline{v_{a}})\,\delta(c^{-1}a)\,\delta(c^{-1}c'))_{c'\in A}\\ \\
((v_{a}\otimes \overline{v_{a}})\,\delta(c^{-1}a)\,\delta(c^{-1}c'))_{c'\in A}
\end{array}\right).
\end{equation}
\end{thm}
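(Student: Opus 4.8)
The plan is to read off Theorem~\ref{mumu0} from Lemma~\ref{mum0} by recognizing the recursion for the vectors $f^i_j$ as a \emph{transfer-matrix product} along the word, and then to pass to the squared modulus by tensoring each factor of that product with its complex conjugate. First I would observe that the recursion in Lemma~\ref{mum0} for the pair $\binom{f^1_j}{f^2_j}$ is nothing but repeated left multiplication by the $(a_j,a_{j-1})$ block
\[
M_{a_j a_{j-1}}=\begin{pmatrix}\widehat H_{a_j a_{j-1}} & E_{a_j a_{j-1}}\\ 0 & H_{a_j a_{j-1}}\end{pmatrix}
\]
of the matrix $\widetilde{\mathcal D}$ of \eqref{matrixtilde}. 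Unfolding the recursion and substituting into \eqref{mum}, the single matrix coefficient becomes a chain
\[
<\mu[e,a,v_a],\pi(a_1\dots a_J)\mu[e,b,v_b]>=\ell(a_J)^\top\, M_{a_J a_{J-1}}\cdots M_{a_2 a_1}\, r(a_1),
\]
where $\ell(a_J)=\binom{v_b\delta(a_J b)}{E_{e a_J}}$ and $r(a_1)=\binom{E_{a_1 e}}{v_a\delta(a_1 a^{-1})}$ are the left and right boundary vectors of \eqref{mum}, and there are exactly $J-1$ transfer factors (the $J-2$ from the recursion together with the explicit $\widetilde{\mathcal D}$ of Lemma~\ref{mum0}).

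Next I would take $|z|^2=z\overline z$ and turn the product of the chain with its conjugate into the Kronecker product of the two chains. This is precisely the tensor/trace calculus set up in Subsection~\ref{stwin}: using $(T_1\otimes T_2)S=T_1 S T_2'$ together with the pairings \eqref{otto}--\eqref{nove}, a scalar of this shape satisfies
\[
|\ell^\top M_J\cdots M_2\, r|^2=(\ell\otimes\overline\ell)^\top (M_J\otimes\overline{M_J})\cdots(M_2\otimes\overline{M_2})(r\otimes\overline r).
\]
Each factor $M_{a_j a_{j-1}}\otimes\overline{M_{a_j a_{j-1}}}$ is exactly the $(a_j,a_{j-1})$ block of $\mathcal D=\widetilde{\mathcal D}\otimes\overline{\widetilde{\mathcal D}}$, while $\ell(a_J)\otimes\overline{\ell(a_J)}$ and $r(a_1)\otimes\overline{r(a_1)}$ reproduce, after collecting the $\delta$ factors, exactly the vectors $R(d)$ of \eqref{erre} (localized at $a_J=d$ via $\delta(d^{-1}d')$) and $S(c)$ of \eqref{esse} (localized at $a_1=c$ via $\delta(c^{-1}c')$).

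I would then sum over all reduced words $x\in\Gamma(c)\cap\widetilde\Gamma(d)$ with $|x|=J$; these are precisely the words with first letter $a_1=c$, last letter $a_J=d$, and interior letters $a_2,\dots,a_{J-1}$ free subject to $a_j\neq a_{j-1}^{-1}$. Summing the tensored chain over the interior letters is, by the definition of matrix multiplication, the composition $\mathcal D^{J-1}$ with the outer indices pinned to $d$ and $c$ by the $\delta$ factors carried by $R(d)$ and $S(c)$, which is exactly $R(d)\,\mathcal D^{J-1}\,S(c)$. Reducedness is enforced automatically: every block $M_{ab}$, hence every block of $\mathcal D$, vanishes when $ab=e$, since $H_{ab}=0$, $E_{ab}=0$, and $\widehat H_{ab}=H^\ast_{b^{-1}a^{-1}}=0$ there; words with a cancellation $a_j=a_{j-1}^{-1}$ therefore contribute zero and are silently omitted from the matrix power.

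The delicate point is entirely the second step: verifying that the scalar identity $|w^\top A_1\cdots A_k u|^2=(w\otimes\overline w)^\top(A_1\otimes\overline A_1)\cdots(A_k\otimes\overline A_k)(u\otimes\overline u)$ holds with all orderings and with the antilinear-functional identifications $\widehat V_a=\overline V_{a^{-1}}'$ respected, so that the resulting boundary tensors coincide \emph{literally} with \eqref{erre} and \eqref{esse} and the transfer block is $\widetilde{\mathcal D}\otimes\overline{\widetilde{\mathcal D}}$ rather than some transpose or reshuffling of it; getting the placement of the bars and of the $\delta$'s to match the stated $R(d)$ and $S(c)$ is where the real care is needed. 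I would also dispatch the degenerate case $J=1$ separately, where there is no interior letter, $\mathcal D^{J-1}=\mathcal D^0$ is the identity, the single-letter word forces $c=d$, and one checks directly that $R(d)\,S(c)$ reproduces the evaluation of the coefficient.
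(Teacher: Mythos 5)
Your proposal is correct and follows essentially the same route as the paper: starting from Lemma \ref{mum0}, tensoring the chain with its complex conjugate to turn $\widetilde{\mathcal D}$ into $\mathcal D=\widetilde{\mathcal D}\otimes\overline{\widetilde{\mathcal D}}$, and telescoping the recursion for $(f^1_j,f^2_j)$ into the power $\mathcal D^{J-1}$, with reducedness enforced by the vanishing of the blocks $H_{ab}$, $\widehat H_{ab}$, $E_{ab}$ when $ab=e$. The only organizational difference is that the paper sums first, introducing the partial-sum vectors $S^{(j)}=\sum_x f^i_j(x)\otimes\overline{f^h_j(x)}$ and proving $S^{(j)}=\mathcal D S^{(j-1)}$ inductively, whereas you unfold the full transfer chain word by word and then sum over the interior letters; these are the same computation.
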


The interested reader can find both proofs in Appendix.

\subsection{The  $1$-Eigenspace of $\mathcal{D}$}

%
%
The following calculations do not depend on having a free group.
We take an assigned indexing set
$A$, two systems
$(V_a, H_{ba})$ and $({V}^\sharp_a,{H}^\sharp_{ba}),$ (where
$H_{ba}:V_a\rightarrow {V_b},$ and ${H}^\sharp_{ba}:{V}^\sharp_a\rightarrow {V}^\sharp_b$) and  a set of linear maps
$E_{ba}:V_a\rightarrow {V_b}^{\sharp}.$
We shall denote  by
${\mathcal D}=(D_{i,j})_{i,j=1,\dots, 4}$
the following matrix
\begin{equation}\label{matrix}
{\mathcal D}=\!\!\left(\!\!\!\!\begin{array}{cccc}
\left(\!{H^\sharp}_{ab}\otimes \overline{{H^\sharp}}_{ab}\!\right)_{a,b}
&\!\!\left(\!E_{ab}\otimes \overline{{H^\sharp}}_{ab}\!\right)_{a,b}
&\!\!\!\left(\!{H^\sharp}_{ab}\otimes \overline{{E}}_{ab}\!\right)_{a,b}
&\!\!\!\left(\!{E}_{ab}\otimes \overline{{E}}_{ab}\!\right)_{a,b}\\ \\
\!\!0
&\!\!\!\left(\!{H}_{ab}\otimes \overline{{H^\sharp}}_{ab}\!\right)_{a,b}
&\!\!\!\!0
&\!\!\!\left(\!{H}_{ab}\otimes \overline{{E}}_{ab}\!\right)_{a,b}\\ \\
\!\!0
&\!\!\!\!0
&\!\!\!\left(\!{H^\sharp}_{ab}\otimes \overline{{H}}_{ab}\!\right)_{a,b}
&\!\!\!\left(\!{E}_{ab}\otimes \overline{{H}}_{ab}\!\right)_{a,b}\\ \\
\!\!0
&\!\!\!0
&\!\!\!\!0
&\!\!\!\left(\!{H}_{ab}\otimes \overline{{H}}_{ab}\!\right)_{a,b}
\end{array}
\!\!\right)
\end{equation}


\begin{prop}\label{pregressi}
\par
Assume that we are given two normalized irreducible systems $(H_{ba}, V_a)$,
$({V}^\sharp_a,{H^\sharp}_{ba})$ and let $\mathcal D$ be as in \eqref{matrix}.
Then the spectral radius of $\mcD$ is $1$. Moreover:
\begin{itemize}
\item[A)] If the two systems are  {\bf inequivalent},
$1$ is an eigenvalue of multiplicity {\bf two};
\item[B)] If the two systems are {\bf equivalent}, $1$ is an
 eigenvalue of multiplicity {\bf four}.
\end{itemize}

\end{prop}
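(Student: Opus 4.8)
The plan is to exploit that $\mcD$ is block upper triangular, so that neither the spectral radius nor the algebraic multiplicity of the eigenvalue $1$ depends on the off-diagonal entries $E_{ab}$: the characteristic polynomial of $\mcD$ factors as the product of the characteristic polynomials of its four diagonal blocks
$$(H^\sharp_{ab}\otimes\overline{H^\sharp}_{ab})_{a,b},\quad (H_{ab}\otimes\overline{H^\sharp}_{ab})_{a,b},\quad (H^\sharp_{ab}\otimes\overline{H}_{ab})_{a,b},\quad (H_{ab}\otimes\overline{H}_{ab})_{a,b}.$$
Thus $\rho(\mcD)=\max_i\rho(D_{ii})$, and the algebraic multiplicity of $1$ in $\mcD$ is the sum of the algebraic multiplicities of $1$ in these four blocks. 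I would organise them by introducing the (reducible) direct-sum system $W_a=V_a\oplus V_a^\sharp$ with $G_{ba}=\mathrm{diag}(H_{ba},H^\sharp_{ba})$. Exactly as in the proof of Proposition \ref{der}, its compatibility operator $\mathcal{E}_W\colon X\mapsto\big(\sum_b G^\ast_{ba}X_bG_{ba}\big)_a$ has transpose $(G_{ab}\otimes\overline{G}_{ab})_{a,b}$; since $G$ is block diagonal, this transpose is block diagonal across the four ``sectors'' $V_a\otimes\overline{V}_a$, $V_a\otimes\overline{V}_a^\sharp$, $V_a^\sharp\otimes\overline{V}_a$, $V_a^\sharp\otimes\overline{V}_a^\sharp$, and its sector blocks are precisely the four diagonal blocks above. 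As transposition preserves eigenvalues together with their algebraic multiplicities, it suffices to analyse $\mathcal{E}_W$.

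Next I would normalise. Conjugating $\mathcal{E}_W$ by the strictly positive tuple $(B_a\oplus B_a^\sharp)^{1/2}$ turns it into a \emph{unital} completely positive map $\widetilde\Phi$ on $\bigoplus_a\mathscr{L}(W_a)$, unitality being exactly the compatibility identity \eqref{compatibility} for $W$. A unital completely positive map is power bounded, $\|\widetilde\Phi^{\,n}\|=1$, so its spectral radius is $1$ and every eigenvalue of modulus $1$ — in particular $1$ — is semisimple. Because $\widetilde\Phi$ is again block diagonal across the four sectors, each diagonal block of $\mcD$ has spectral radius $\le 1$ and semisimple eigenvalue $1$; the two ``pure'' blocks $H\otimes\overline{H}$ and $H^\sharp\otimes\overline{H^\sharp}$ carry the positive fixed points $B$ and $B^\sharp$, so their spectral radius is exactly $1$. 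This establishes $\rho(\mcD)=1$ and reduces the multiplicity count to computing $\dim\ker(\mathcal{E}_W-I)$, which splits as a direct sum over the four sectors.

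For the two pure sectors, $\mathcal{E}_W$ restricts to the compatibility operator of a single \emph{irreducible} normalised system; by the uniqueness up to a scalar of its compatible tuple — the Perron simplicity invoked in Proposition \ref{der} and \cite{K-S3} — each fixed-point space is one dimensional, so each pure block contributes $1$. For the two mixed sectors, whose operators are complex conjugates of one another and so contribute equally, a fixed point is a tuple $Y_a\in\mathscr{L}(V_a^\sharp,V_a^\ast)$ with $Y_a=\sum_b H^\ast_{ba}Y_b H^\sharp_{ba}$, and the claim to establish is that such $Y$ correspond bijectively to system maps $V_a\to V_a^\sharp$. Granting this, Remark \ref{inequivalenti} (Schur for irreducible systems) says the space of system maps is $0$ when the systems are inequivalent and one dimensional when they are equivalent. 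Summing the four contributions then yields algebraic multiplicity $1+1+0+0=2$ in the inequivalent case and $1+1+1+1=4$ in the equivalent case.

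The hard part will be precisely this mixed-sector correspondence, together with ruling out any Jordan block there. I expect to handle it through the fixed-point structure of $\widetilde\Phi$: its dual carries the strictly positive invariant tuple $\widehat{B}\oplus\widehat{B}^\sharp$ produced by Proposition \ref{der}, so $\widetilde\Phi$ is a unital completely positive map with a faithful invariant state, whence its fixed-point set lies in the multiplicative domain and is a $C^\ast$-subalgebra that is a bimodule over the Kraus operators. Writing the bimodule relation out in the off-diagonal sector should upgrade the single summed equation $Y_a=\sum_b H^\ast_{ba}Y_b H^\sharp_{ba}$ to the genuine per-pair intertwining relations $H^\sharp_{ba}J_a=J_bH_{ba}$ for $J_a=B_a^{-1}Y_a$, and at the same time force semisimplicity in the mixed sector. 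Conceptually this identifies the fixed-point algebra as $\C\oplus\C$ when the two irreducible systems are inequivalent and as $M_2(\C)$ when they are equivalent, whose dimensions $2$ and $4$ are exactly the asserted multiplicities; the remaining routine work is to verify the unitality computation and the multiplicative-domain step in the present notation.
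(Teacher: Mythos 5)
Your proposal is correct, but it reaches the conclusion by a genuinely different route from the paper. The paper also starts from block triangularity, but then treats the four diagonal blocks separately by citing prior spectral results: Lemma~4.6 of \cite{K-S3} and Theorem~3.1 of \cite{V} give spectral radius $1$ for $D_{1,1}$ and $D_{4,4}$, Corollary~5.4 of \cite{K-S3} gives spectral radius $\le 1$ for the mixed blocks $D_{2,2}$, $D_{3,3}$ with eigenvalue $1$ occurring iff the systems are equivalent, semisimplicity of $1$ in $D_{1,1}$ (and $D_{4,4}$) is proved by the explicit positivity argument that $D_{1,1}^n(t_0U-W)=t_0U-W-nU$ cannot stay positive semidefinite, and the equivalent case is finished by observing (via Remark~\ref{inequivalenti}) that all four diagonal blocks are then similar to $D_{1,1}$. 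You instead package the four blocks as the sectors of the transposed compatibility operator of the direct-sum system, normalize to a unital completely positive map, and extract spectral radius $\le1$ together with semisimplicity of all peripheral eigenvalues in every sector simultaneously from power boundedness; the multiplicity count then becomes the dimension of the fixed-point set, identified via the multiplicative-domain/commutant argument with the space of system maps, to which Remark~\ref{inequivalenti} applies. Your route buys a uniform treatment of all four blocks and actually \emph{derives} the mixed-block statement that the paper imports from \cite{K-S3}; its cost is the two steps you correctly flag as outstanding, namely the existence of a faithful invariant state (a strictly positive dual eigentuple, which for the direct sum follows from Proposition~\ref{der} applied to each irreducible summand) and the verification that equality in the Schwarz inequality forces the per-pair intertwining relations $H^\sharp_{ba}J_a=J_bH_{ba}$ rather than only the summed fixed-point equation. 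Both steps are standard and do go through, so I see no gap; one trivial slip is that $J_a=B_a^{-1}Y_a$ is a system map $V_a^\sharp\to V_a$ rather than $V_a\to V_a^\sharp$, which does not affect the dimension count.
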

\begin{proof}
Since $\mcD$ is block upper triangular, its eigenvalues are the same
as the eigenvalues of the diagonal blocks $D_{j,j}$.
By Lemma 4.6 of \cite{K-S3} and
Theorem 3.1 of \cite{V}
the two blocks $D_{1,1}$ and $D_{4,4}$ have spectral radius
equal to $1,$ and  $1$ is an eigenvalue irrespective
 of whether the two systems are  equivalent.
Let us turn to the other
diagonal  blocks.
Observing that the eigenvalues of the transpose matrix
${D'}_{j,j}$ are the same as those of $D_{j,j}$,
 one can apply Corollary 5.4 of \cite{K-S3}
to both matrices $D_{2,2}=({H}^{\sharp}_{ab}\otimes \overline{H}_{ab})$
and $D_{3,3}=
({{H}^{\sharp}}_{ab}^{\ast}\otimes{H}_{ab})$
to conclude that
\begin{enumerate}
\item
they both have spectral radius less or equal to $1$,
\item $1$ is an eigenvalue for both if and only if the two systems
are equivalent.
\end{enumerate}
Let us turn now to the multiplicity of $1$ for  $D_{1,1}$ and $D_{4,4}$.
We shall consider only $D_{1,1}$, {being} the other case similar.
By Corollary 4.8 of \cite{K-S3}, we know that there exists an essentially
unique tuple $U=(U_a)$ of strictly positive definite forms such that
$D_{1,1}U=U$, hence the geometric multiplicity of $1$ is $1$.
Assume, by contradiction, that the algebraic multiplicity is more than one.
Hence there exists a nonzero tuple $W=(W_a)$ satisfying
\begin{equation*}
D_{1,1}W=W+\lambda U\quad \mbox{for some $\lambda\neq0$}\;.
\end{equation*}
We may assume that $\lambda=1$
(if $\lambda$ is negative we may replace $W$ with $-W$).
Choose $t_0$ big enough so that $t_0U-W$ is positive semidefinite.

By our assumption $D^n_{1,1}(t_0U-W)$ is also positive
 semidefinite for all $n\geq0$. Since
$$
D_{1,1}^n(t_0U-W)
=t_0U-W-nU
$$
when $n$ is big enough we get a contradiction since $U$ is {\it strictly}
positive.
To conclude the proof observe that, when the two systems are equivalent, by
 Remark \ref{inequivalenti}, all
the diagonal blocks $D_{j,j}$ are similar to $D_{1,1}$.
\end{proof}

\subsection{Proof of Theorem 1}

\begin{reftheorem}{teorema1}
Let $(V_a,H_{ba})$
be an irreducible normalized matrix system.
Construct the matrix $\mcD$ as in \eqref{matrix} using for
 $(V^\sharp_a,H^\sharp_{ba})$  the twin  system
 $(\widehat{V}_a,\widehat{H}_{ba})$ and let $d$ be the dimension of
the eigenspace of $1$ of $\mcD$. For any
positive $\varepsilon$, $v_a\in V_a$ and $v_b\in V_b$ define
$$
\|\phi_\varepsilon^{v_a,v_b}\|^2=
\sum_{x\in\Gamma}
{|<\mu[e,a,v_a],\pi(x)\mu[e,b,v_b]>|^2}
e^{-\varepsilon|x|} \;.
$$
Then
\begin{itemize}
\item[A)] If the two systems are {\bf inequivalent}
 one has, as $\varepsilon\to0$
\begin{align*}
&\|\phi_\varepsilon^{v_a,v_b}\|^2\simeq\frac1\varepsilon \;&\text{when}\; d&=2, \\
&\|\phi_\varepsilon^{v_a,v_b}\|^2\simeq\frac1{\varepsilon^2}\;& \text{when}\; d&=1.
\end{align*}
\item[B)]
If the two systems are {\bf equivalent}
\begin{align*}
&\|\phi_\varepsilon^{v_a,v_b}\|^2\simeq\frac1\varepsilon\;&\text{when}\; d=4, \\
&\lim_{\eps\to0}\eps^3\|\phi_\varepsilon^{v_a,v_b}\|^2\;\text{exists and is finite}
& \text{in all other cases.}
\end{align*}


\end{itemize}
\end{reftheorem}
\begin{proof}
Cases A) and B) correspond exactly to items A and B of Proposition
\ref{pregressi}.
Define
\begin{equation}\label{matrixcoe}
\psi(\varepsilon,c,d)=
\sum_{J=1}^\infty \sum_{\atopn{x\in\Gamma(c)\cap\widetilde{\Gamma}(d)}{| x| =J}}
{|\!<\mu[e,a,v_a],\pi(x)\mu[e,b,v_b]>\!|^2}e^{-\varepsilon J}.
\end{equation}
It is enough to compute $\psi(\varepsilon, c,d)$
for all $c,d\in A$.
By Theorem \ref{mumu0} there exist vectors $R(d)$ and $S(c)$, depending only
on $v_a$ and $v_b$, such that
$$
\sum_{\atopn{x\in\Gamma(c)\cap\widetilde{\Gamma}(d)}{| x| =J}}
{|<\mu[e,a,v_a],\pi(x)\mu[e,b,v_b]>|^2}e^{-\varepsilon J}=
R(d)\,{\mathcal D}^{J-1}\, S(c)e^{-\varepsilon J}
$$
where $\mcD=\tilde{\mcD}\otimes\overline{\tilde{\mcD}}$
and $\tilde{\mcD}$ is as in \eqref{matrixtilde}. Observe that
$\mcD$ is the same as the matrix of equation \eqref{matrix}
where we set $H^\sharp_{ba}=\widehat{H}_{ba}$. Moreover, $\mcD$
depends only on the system we started with.
Denote by $\mathscr{L}$ the finite dimensional vector space on which
$\mcD$ acts and by $K_1$ the generalized eigenspace of $1$.
Since
$$
\psi(\varepsilon,c,d)
= e^{-\varepsilon}R(d)\,(I-{\mathcal D} e^{-\varepsilon})^{-1}\, S(c),
$$
the growth of $\psi(\varepsilon,c,d)$  as $\varepsilon$ goes to zero,
depends only on the maximum size of
the Jordan blocks $J_1$ relative to $K_1$. We recall that a Jordan
block of size $r$ will produce a leading term
$\sum_{J=1}^\infty\binom{J-1}{r-1}e^{-\eps J}$
in the computation of $\psi(\varepsilon,c,d)$.
When the two systems are inequivalent, by Proposition \ref{pregressi},
the dimension of $K_1$ is two
and
\begin{equation*}
\psi(\varepsilon,c,d)\simeq \sum_{J=1}^\infty e^{-\varepsilon J}\simeq
\frac1\varepsilon\quad\mbox{when }\; J_1=\left(\begin{matrix}
1&0\\
0&1
\end{matrix}\right)
\end{equation*}
or
\begin{equation*}
\psi(\varepsilon,c,d)\simeq \sum_{J=1}^\infty (J-1) e^{-\varepsilon J}\simeq
\frac1{\varepsilon^2}\quad\mbox{when }\; J_1=\left(\begin{matrix}
1&1\\
0&1
\end{matrix}\right)
\end{equation*}
More details can be found in the proof of Lemma \ref{lemma5.4}.

For equivalent systems there are two more possibilities for the maximum size of
Jordan blocks, namely three and four.
The existence of a single Jordan block of size four is ruled out by
Haagerup's condition \eqref{haag}, since a block of size four  would lead to
\begin{equation*}
\psi(\varepsilon,c,d)\simeq \sum_{J=1}^\infty \binom{J-1}{3}
 e^{-\varepsilon J}\simeq
\frac1{\varepsilon^4}\;.
\end{equation*}
The last possibility is a
Jordan block of size
three together with one of size one: this is exactly what
happens
 for the representations corresponding to the
endpoints of the  isotropic/anisotropic principal
series of Fig\`a-Talamanca and Picardello \cite{FT-P},
Fig\`a-Talamanca and Steger \cite{FT-S} for which one gets
\begin{equation*}
\psi(\varepsilon,c,d)\simeq \sum_{J=1}^\infty \binom{J-1}{2}
 e^{-\varepsilon J}\simeq
\frac1{\varepsilon^3}.
\end{equation*}
\end{proof}
\begin{rem}
A more accurate analysis of $\mcD$, that we shall omit here,
shows that, for equivalent systems,
it is not possible to have a Jordan block of length two, hence in this case
 there are only two possible behaviors
for $\|\phi_\varepsilon^{v_a,v_b}\|_2^2$, namely $\frac1{\varepsilon^3}$
or $\frac1\varepsilon$.
\end{rem}
\begin{lem}\label{lemq}
Let $(V_a, H_{ba})$, $({V}^{\sharp}_a,{H}^{\sharp}_{ba})$ and $\mcD$ be
as in Proposition~\ref{pregressi} and
let $(P_a)_a$ be the eigenvector of $1$ of  $D_{4,4}$.
Assume that the two systems are inequivalent.
Then for each $b\in A$, there exists a linear map
$Q_b:V_b\rightarrow {V}^{\sharp}_b$ such that
 the vector
$$\left(\begin{array}{c}
(P_a\, Q_a^{\ast})_{a}\\
( Q_a \, P_a)_{a}\\
(P_a)_{a}
\end{array} \right)$$
is (up to constant) the unique eigenvector of
$1$ of the the principal submatrix
\begin{eqnarray*}
\mathcal{D}_1&=&\!\!\left(\begin{array}{ccc}
\left({H}_{ab}\otimes \overline{{H}^{\sharp}}_{ab}\right)_{a,b}&
0&\left({H}_{ab}\otimes \overline{{E}}_{ab}\right)_{a,b}\\ \\
0&\left({H}^{\sharp}_{ab}\otimes \overline{{H}}_{ab}\right)_{a,b}&
\left({E}_{ab}\otimes \overline{{H}}_{ab}\right)_{a,b}\\ \\
0&0&\left({H}_{ab}\otimes \overline{{H}}_{ab}\right)_{a,b}
\end{array}
\right)\\  \\ \\
&=&\left(\begin{array}{ccc}
D_{2,2}&
0&D_{2,4}\\ \\
0&D_{3,3}&
D_{3,4}\\ \\
0&0&D_{4,4}
\end{array}
\right).
\end{eqnarray*}
obtained by deleting the rows and columns of $D_{1,1}$
\end{lem}
\begin{proof}
We first note that, since the system  $(V_a, H_{ba})$ is irreducible and normalized,
then  $P_a$ is strictly positive definite
as a form on $V_a^{\ast}$, and so self-adjoint when identified with
the map $P_a:V_a^{\ast}\rightarrow V_a.$
By Proposition \ref{pregressi}, $1$ is an eigenvalue of multiplicity one for
$\mcD_1$.
We look for a vector $R=(R_2,R_3,R_4)$
 satisfying $\mathcal{D}_1 R=R$ or equivalently
\begin{equation}\label{eq0}\left\{\begin{array}{l}
D_{4,4} R_4=R_4 \\
D_{3,3} R_3+ D_{3,4} R_4=R_3 \\
D_{2,2} R_2+ D_{2,4} R_4=R_2. \\
\end{array}\right.\end{equation}
If $R_4= 0$, then
$R_2=R_3=0$, so let us assume that $R_4\neq 0$.
The first equation in \eqref{eq0} yields that $R_4$ is proportional to
$P=(P_b)_{b}.$
Without loss of generality, we can assume that
$R_4=P$.

The second equation can be written as
$$(I-D_{3,3} ) R_3=D_{3,4} P,$$
where $I$ is the identity matrix and, given the nature of  the eigenvalues of
$D_{3,3}$, it has $R_3=(I-D_{3,3})^{-1}D_{3,4} P$ as a unique solution.

Now $P_b$ is strictly positive definite, so it is invertible as
 $P_b:V_b^{\ast}\rightarrow V_b,$ so the map
$$Q_b=R_{3,b}\,P_b^{-1}:V_b\rightarrow{V}^{\sharp}_b,\quad
R_3=(R_{3,b})_b,$$
is linear and $R_{3,b}=Q_b\,P_b.$

Hence, the second equation in the system rewrites, for any $a\in A$,
$$
\sum_{b\in A}{({H}^{\sharp}_{ab}\otimes \overline{H}_{ab})(Q_b\,P_b)+
(E_{ab}\otimes\overline{H}_{ab})(P_b)}=Q_a\,P_a,
$$
which is equivalent to
\begin{equation}\label{eqlemq}
\sum_{b\in A}{{H}^{\sharp}_{ab}Q_b\,P_b {H}_{ab}^{\ast}
+E_{ab}P_b{H}_{ab}^{\ast}}=Q_a\,P_a.
\end{equation}
Taking adjoint we get
\begin{eqnarray*}
& &\sum_{b\in A}{({H}_{ab}\otimes\overline{{H}^{\sharp}}_{ab})
(P_b\, Q_b^{\ast}) +(H_{ab}\otimes\overline{E}_{ab})(P_b)}\\
&=&\sum_{b\in A}{{H}_{ab}P_b\, Q_b^{\ast} {{H}^{\sharp}}_{ab}^{\ast}
+H_{ab}P_b{E}_{ab}^{\ast}}
=P_a Q_a^{\ast},
\end{eqnarray*}
and  the entry of $R_2$ corresponding to $a\in A$ is necessarily
$$R_{2,a}=P_a\, Q_a^{\ast}.$$
\end{proof}
\begin{thm}\label{dim2}
Let $(V_a, H_{ba})$,  $({V}^{\sharp}_a,{H}^{\sharp}_{ba})$ and
$P=(P_a)$ be as in Lem\-ma~\ref{lemq}.
Let $\widetilde{P^\sharp}=(\widetilde{{P}^{\sharp}}_a)$ be the
eigenvector
 of $1$ of $D_{1,1}'=(H^\sharp_{ab}\otimes\overline{H^\sharp}_{ab})'$.

Then the eigenspace of ${\mathcal D}$ corresponding to  eigenvalue $1$ has dimension $2$
if and only if
there exist linear maps $Q_b:V_b\rightarrow {V}^{\sharp}_b,$ $b\in A,$
satisfying \eqref{eqlemq} so that the quantity
\begin{eqnarray}
& &{E^\sharp}_0:= \label{e0}\\
& &\!\!\sum_{a,b\in A}{\mathrm{tr}}{
(\widetilde{{P}^{\sharp}}_a \, E_{ab}\, P_b\, Q_b^{\ast}\,{{H}^{\sharp}}_{ab}^{\ast}
+\widetilde{{P}^{\sharp}}_a\, {H}^{\sharp}_{ab}\, Q_b\, P_b\, {E}_{ab}^{\ast}
+\widetilde{{P}^{\sharp}}_a\, {E}_{ab}\, P_b\,{E}_{ab}^{\ast})},\nonumber
\end{eqnarray}
verifies ${E^\sharp}_0=0.$
\end{thm}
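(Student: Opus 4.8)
The plan is to compute the $1$-eigenspace of $\mcD$ directly by solving $\mcD R=R$ for $R=(R_1,R_2,R_3,R_4)$ and exploiting the block upper triangular shape of \eqref{matrix}. Reading the four block-rows from the bottom up, the equation is equivalent to
\[
(I-D_{4,4})R_4=0,\quad (I-D_{3,3})R_3=D_{3,4}R_4,\quad (I-D_{2,2})R_2=D_{2,4}R_4,
\]
together with $(I-D_{1,1})R_1=D_{1,2}R_2+D_{1,3}R_3+D_{1,4}R_4$. By Proposition~\ref{pregressi}, for inequivalent systems $1$ is a geometrically simple eigenvalue of both $D_{1,1}$ and $D_{4,4}$, while it is not an eigenvalue of $D_{2,2}$ or $D_{3,3}$; hence $I-D_{2,2}$ and $I-D_{3,3}$ are invertible, $R_4$ is forced to be a multiple of $P$, and once $R_4$ is fixed $R_2,R_3$ are uniquely determined.

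First I would dispose of the case $R_4=0$: it forces $R_2=R_3=0$ and leaves $R_1$ free in the one-dimensional eigenspace of $D_{1,1}$, producing one eigenvector of $\mcD$ with vanishing fourth component. Thus the eigenspace always has dimension at least $1$, and it has dimension $2$ exactly when there is in addition an eigenvector with $R_4\neq0$. Normalising $R_4=P$, Lemma~\ref{lemq} provides the unique solutions of the two middle equations, $R_3=(Q_aP_a)_a$ and $R_2=(P_aQ_a^{\ast})_a$, for any family $(Q_b)$ satisfying \eqref{eqlemq}. Everything then hinges on whether the top equation $(I-D_{1,1})R_1=D_{1,2}R_2+D_{1,3}R_3+D_{1,4}P$ can be solved for $R_1$.

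This is the decisive step. Since $1$ is a simple eigenvalue of $D_{1,1}$, the range of $I-D_{1,1}$ has codimension one and, by the Fredholm alternative, coincides with the annihilator of $\ker(I-D_{1,1}')=\mathrm{span}(\widetilde{P^\sharp})$. Consequently the top equation is solvable if and only if $\widetilde{P^\sharp}$ annihilates its right-hand side, that is
\[
\big\langle\widetilde{P^\sharp},\,D_{1,2}R_2+D_{1,3}R_3+D_{1,4}P\big\rangle=0.
\]
Unwinding the tensor-product action through $(T_1\otimes T_2)S=T_1ST_2'$ and pairing against $\widetilde{P^\sharp}$ by the trace form \eqref{otto}, the blocks $D_{1,2}R_2$, $D_{1,3}R_3$ and $D_{1,4}P$ produce, component by component, exactly the three summands $\tr(\widetilde{P^\sharp}_a E_{ab}P_bQ_b^{\ast}{H^\sharp_{ab}}^{\ast})$, $\tr(\widetilde{P^\sharp}_a H^\sharp_{ab}Q_bP_bE_{ab}^{\ast})$ and $\tr(\widetilde{P^\sharp}_a E_{ab}P_bE_{ab}^{\ast})$ of ${E^\sharp}_0$ in \eqref{e0}. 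Hence the solvability condition reads ${E^\sharp}_0=0$, and the eigenspace has dimension $2$ iff such a $(Q_b)$ with ${E^\sharp}_0=0$ exists.

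I expect the main obstacle to be the bookkeeping in this last identification: one must track the dual and conjugate transposes carefully (so that $(\overline{H^\sharp}_{ab})'={H^\sharp_{ab}}^{\ast}$ and $(\overline{E}_{ab})'=E_{ab}^{\ast}$) and confirm that the trace pairing matches \eqref{e0} term by term. The structural inputs — geometric simplicity of the eigenvalue $1$ for $D_{1,1}$ and $D_{4,4}$ and its absence for $D_{2,2},D_{3,3}$ — are immediate from Proposition~\ref{pregressi}, and the uniqueness of $R_2,R_3$ is exactly the content of Lemma~\ref{lemq}.
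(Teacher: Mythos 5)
Your argument is correct and follows essentially the same route as the paper: exploit the block upper triangular structure, use Proposition~\ref{pregressi} to pin down which diagonal blocks have eigenvalue $1$, invoke Lemma~\ref{lemq} for the components $R_2,R_3,R_4$, and reduce solvability of the top block equation to the orthogonality of its right-hand side against $\ker(I-D_{1,1}')=\mathrm{span}(\widetilde{P^\sharp})$ via the trace pairing, which is precisely ${E^\sharp}_0=0$. The only cosmetic difference is that the paper phrases the dimension-two condition as the existence of an eigenvector $W$ with $W_4\neq 0$ not proportional to $U^\sharp=(P^\sharp,0,0,0)^{\top}$ rather than splitting into the cases $R_4=0$ and $R_4\neq 0$, but the content is identical.
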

\begin{proof}
Let $({P^{\sharp}}_a)$ be the right eigenvector of the
{block}  $D_{1,1}$. It is obvious that the full matrix $\mcD$
has the vector
$U^\sharp=\left(\begin{array}{c}
{P^\sharp}_a\\
0\\
0\\
0\\
\end{array}\right)$ as right eigenvector of $1$. Hence
the eigenspace of $1$ of ${\mathcal D}$ has dimension $2$
 if and only if there exists an eigenvector $W=(W_j)_{j=1,\dots,4},$
not proportional to $U^\sharp$.
 $W$ is a right eigenvector if and only if the three last components
$(W_2,W_3,W_4)$ satisfy \eqref{eq0} and the full vector satisfies
\begin{equation}\label{eq1}
D_{1,1} W_1+D_{1,2} W_2+D_{1,3} W_3+D_{1,4} W_4=W_1.
\end{equation}

Since we are looking for a vector not proportional to $U^\sharp$
we may assume $W_4\neq 0$.

By Lemma \ref{lemq}
$$
\left(\begin{array}{c}
W_2\\ W_3\\ W_4\end{array}\right)\quad=\quad
\left(\begin{array}{c}
(P_a\, Q_a^{\ast})_{a}\\
( Q_a \, P_a)_{a}\\
(P_a)_{a}
\end{array} \right)$$ for suitable maps $Q_a:V_a\to V^\sharp_a$.

If we denote by $W_{1,a}$  the entry
of $W_1$ corresponding to $a,$ equation \eqref{eq1} can be rewritten as
\begin{equation}\label{eq1bis}
\begin{aligned}
&
W_{1,a}-\sum_{b\in A}{{H}^{\sharp}_{ab}W_{1,b}{{H}^{\sharp}}_{ab}^{\ast}}=\\
&\sum_{b\in A}{\left[ E_{ab}P_b Q_b^{\ast}{{H}^{\sharp}}_{ab}^{\ast}+
{{H}^{\sharp}}_{ab}Q_b P_b{E}_{ab}^{\ast}+
 E_{ab}P_b{E}_{ab}^{\ast}\right]}\\
 &:=T_a.
\end{aligned}
\end{equation}
Since the submatrix $D_{1,1}$ does have the eigenvalue $1$, equation
\eqref{eq1bis} will have a solution if and only if the vector ${T=}(T_a)$
belongs to  $\mathrm{Im}(I- D_{1,1})$, the image of $(I-D_{1,1})$.
But
$$\mathrm{Im}(I- D_{1,1})
=(\mathrm{Ker}((I- D_{1,1})'))^{\bot}$$
and $\mathrm{Ker}((I- D_{1,1})')$ is the one-dimensional subspace
 generated by ${\widetilde{P^\sharp}=(\widetilde{P^\sharp}_a)}$.
Hence the linear system \eqref{eq1} has a solution not proportional to $U^\sharp$
if and only if
\begin{eqnarray*}
0&=&\!\!\mathrm{tr}({\widetilde{P^\sharp}} T)\\
&=&\!\!\mathrm{tr}\left(\sum_{a\in A}{{\widetilde{P^\sharp}_a}
\sum_{b\in A}{\left[ E_{ab}P_b Q_b^{\ast}{{H}^{\sharp}}_{ab}^{\ast}+
{{H}^{\sharp}}_{ab}Q_b P_b{E}_{ab}^{\ast}+
 E_{ab}P_b{E}_{ab}^{\ast}\right]}}\right)\\
 &=&\!\!\sum_{a,b\in A}{\mathrm{tr}\left({\widetilde{P^\sharp}_a} E_{ab}
P_b Q_b^{\ast}{{H}^{\sharp}}_{ab}^{\ast}+
{{\widetilde{P^\sharp}_a}{H}^{\sharp}}_{ab}Q_b P_b{E}_{ab}^{\ast}+
 {\widetilde{P^\sharp}_a} E_{ab}P_b{E}_{ab}^{\ast}\right)}.
 \end{eqnarray*}
\end{proof}
%
We are interested in a more manageable form for ${E^\sharp}_0.$ This can be achieved by
an algebraic calculation.
\begin{prop}
 The quantity ${E^\sharp}_0$ defined in \eqref{e0} can be written as
 \begin{eqnarray}
 & &{E^\sharp}_0= \label{squa}\\
 & &\sum_{a,b\in A}{\mathrm{tr}\left({\widetilde{P^\sharp}_a} ({{H}^{\sharp}}_{ab}Q_b+
 E_{ab}-Q_a H_{ab})P_b({{H}^{\sharp}}_{ab}Q_b+ E_{ab}-Q_a H_{ab})^{\ast}\right)}.\nonumber
 \end{eqnarray}
 \end{prop}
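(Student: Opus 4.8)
The plan is to expand the ``square'' on the right of \eqref{squa} and show that every product not already appearing in \eqref{e0} cancels, using three compatibility relations inherited from earlier in the section. Write $X_{ab}=H^{\sharp}_{ab}Q_b+E_{ab}-Q_aH_{ab}$ and split $X_{ab}=G_{ab}-F_{ab}$ with $G_{ab}=H^{\sharp}_{ab}Q_b+E_{ab}$ and $F_{ab}=Q_aH_{ab}$, so that
\[
X_{ab}P_bX_{ab}^{\ast}=G_{ab}P_bG_{ab}^{\ast}-G_{ab}P_bF_{ab}^{\ast}-F_{ab}P_bG_{ab}^{\ast}+F_{ab}P_bF_{ab}^{\ast}.
\]
Here $\ast$ denotes throughout the antilinear-dual adjoint ($T^{\ast}(f)=f\circ T$), and $P_b=P_b^{\ast}$ since $P_b$ is a positive definite form. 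The three relations I will invoke are: (i) the eigenvector equation for $P$ as the $1$-eigenvector of $D_{4,4}=(H_{ab}\otimes\overline{H}_{ab})$, that is $\sum_b H_{ab}P_bH_{ab}^{\ast}=P_a$; (ii) relation \eqref{eqlemq} in the form $\sum_b G_{ab}P_bH_{ab}^{\ast}=Q_aP_a$, together with its adjoint $\sum_b H_{ab}P_bG_{ab}^{\ast}=P_aQ_a^{\ast}$, both obtained in the proof of Lemma~\ref{lemq}; and (iii) the left-eigenvector equation for $\widetilde{P^{\sharp}}$ as the $1$-eigenvector of $D_{1,1}'$, namely $\sum_a (H^{\sharp}_{ab})^{\ast}\,\widetilde{P^{\sharp}}_a\,H^{\sharp}_{ab}=\widetilde{P^{\sharp}}_b$ (which follows by transposing the trace pairing and using cyclicity).

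First I would collapse the three ``$F$'' terms by summing over $b$. Using (i) gives $\sum_b F_{ab}P_bF_{ab}^{\ast}=Q_a\big(\sum_b H_{ab}P_bH_{ab}^{\ast}\big)Q_a^{\ast}=Q_aP_aQ_a^{\ast}$; using \eqref{eqlemq} gives $\sum_b G_{ab}P_bF_{ab}^{\ast}=\big(\sum_b G_{ab}P_bH_{ab}^{\ast}\big)Q_a^{\ast}=Q_aP_aQ_a^{\ast}$; and using the adjoint relation gives $\sum_b F_{ab}P_bG_{ab}^{\ast}=Q_a\big(\sum_b H_{ab}P_bG_{ab}^{\ast}\big)=Q_aP_aQ_a^{\ast}$. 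With the signs above, these three contributions combine, after pairing with $\widetilde{P^{\sharp}}_a$ and tracing, to exactly $-\tr(\widetilde{P^{\sharp}}_a\,Q_aP_aQ_a^{\ast})$. Hence
\[
\sum_{a,b}\tr\big(\widetilde{P^{\sharp}}_a X_{ab}P_bX_{ab}^{\ast}\big)
=\sum_{a,b}\tr\big(\widetilde{P^{\sharp}}_a G_{ab}P_bG_{ab}^{\ast}\big)
-\sum_a\tr\big(\widetilde{P^{\sharp}}_a Q_aP_aQ_a^{\ast}\big).
\]

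Finally I would expand $G_{ab}P_bG_{ab}^{\ast}=(H^{\sharp}_{ab}Q_b+E_{ab})P_b(H^{\sharp}_{ab}Q_b+E_{ab})^{\ast}$ into its four products. Three of them are precisely $E_{ab}P_bQ_b^{\ast}(H^{\sharp}_{ab})^{\ast}$, $H^{\sharp}_{ab}Q_bP_bE_{ab}^{\ast}$ and $E_{ab}P_bE_{ab}^{\ast}$, so their sum reproduces ${E^{\sharp}}_0$ as written in \eqref{e0}. The surviving product is $H^{\sharp}_{ab}(Q_bP_bQ_b^{\ast})(H^{\sharp}_{ab})^{\ast}$; pairing it with $\widetilde{P^{\sharp}}_a$, applying cyclicity of the trace, and summing over $a$ first via relation (iii) turns it into $\sum_b\tr(\widetilde{P^{\sharp}}_b\,Q_bP_bQ_b^{\ast})$, which cancels the term $-\sum_a\tr(\widetilde{P^{\sharp}}_a\,Q_aP_aQ_a^{\ast})$. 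This leaves exactly ${E^{\sharp}}_0$, proving the identity. The only genuine difficulty is the bookkeeping: keeping the domains and codomains of $H^{\sharp}_{ab}$, $H_{ab}$, $E_{ab}$, $Q_a$ and $P_a$ consistent so that each $\ast$, composition and trace is legitimate, and invoking the adjoint of \eqref{eqlemq} and the self-adjointness $P_b=P_b^{\ast}$ with the correct transpositions.
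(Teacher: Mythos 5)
Your proposal is correct and is exactly the computation the paper's one-line proof (``straightforward after multiplication of all terms'') alludes to: you expand the square, use the eigenvector equation $\sum_b H_{ab}P_bH_{ab}^{\ast}=P_a$, relation \eqref{eqlemq} and its adjoint to collapse the terms involving $Q_aH_{ab}$ into $-\tr(\widetilde{P^\sharp}_aQ_aP_aQ_a^{\ast})$, and the left-eigenvector equation for $\widetilde{P^\sharp}$ to cancel that against the $H^\sharp_{ab}Q_bP_bQ_b^{\ast}(H^\sharp_{ab})^{\ast}$ term. All three relations you invoke are available at this point in the paper, so nothing further is needed.
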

 \begin{proof}
 The proof is straightforward after  multiplication of all terms in the right hand
  side of \eqref{squa}.

\end{proof}

We recall now a general result in linear algebra.
\begin{lem}\label{alge}
Let $A,B$ be two strictly positive definite matrices
and let $C$  be  a not necessarily square matrix.
Then $tr(ACBC^{\ast})\geq 0,$ and
$$tr(ACBC^{\ast})=0\Longrightarrow C=0.$$
\end{lem}

\begin{thm}\label{trace1}
The quantity ${E^\sharp}_0$ verifies ${E^\sharp}_0\geq 0,$ and
 $$
 {E^\sharp}_0=0
$$
if and only if
the linear maps $Q_b:V_b\to {V}^\sharp_b$ provided by Lemma \ref{lemq}
verify the conditions
$$
{{H}^{\sharp}}_{ab}Q_b+ E_{ab}=Q_a H_{ab},\; \forall {a,b\in A}.
 $$
 \end{thm}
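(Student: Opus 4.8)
The plan is to combine the rewritten expression \eqref{squa} for $E^\sharp_0$ with the purely linear-algebraic Lemma~\ref{alge}, applied one summand at a time. The whole statement then reduces to checking that the two families of forms occurring in \eqref{squa} are \emph{strictly} positive definite, so that Lemma~\ref{alge} can be invoked both for the inequality and for its equality case.

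First I would recall the strict positivity of the relevant forms. The tuple $(P_a)$ is the eigenvector of $1$ of $D_{4,4}=(H_{ab}\otimes\overline H_{ab})$; since $(V_a,H_{ba})$ is irreducible and normalized, Corollary~4.8 of \cite{K-S3} (already used in the proof of Lemma~\ref{lemq}) guarantees that each $P_a$ is strictly positive definite, hence self-adjoint and invertible as a map $P_a:V_a^\ast\to V_a$. Likewise $\widetilde{P^\sharp}=(\widetilde{P^\sharp}_a)$ is the eigenvector of $1$ of $D_{1,1}'=(H^\sharp_{ab}\otimes\overline{H^\sharp}_{ab})'$. By the argument in the proof of Proposition~\ref{der}, passing to the transpose amounts, up to interchanges of rows and columns, to the eigenvalue-$1$ problem for an irreducible normalized system; hence the same Corollary~4.8 of \cite{K-S3} applies and each $\widetilde{P^\sharp}_a$ is strictly positive definite.

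Next I would set, for each pair $a,b\in A$,
$$C_{ab}=H^\sharp_{ab}Q_b+E_{ab}-Q_aH_{ab}:V_b\to V^\sharp_a,$$
so that \eqref{squa} reads $E^\sharp_0=\sum_{a,b\in A}\tr(\widetilde{P^\sharp}_a\,C_{ab}\,P_b\,C_{ab}^\ast)$. Each summand is exactly of the form $\tr(ACBC^\ast)$ with $A=\widetilde{P^\sharp}_a$ and $B=P_b$ strictly positive definite and $C=C_{ab}$ arbitrary. By Lemma~\ref{alge} every summand is nonnegative, whence $E^\sharp_0\geq 0$. For the equality case, since $E^\sharp_0$ is a sum of nonnegative terms, $E^\sharp_0=0$ forces each term $\tr(\widetilde{P^\sharp}_a\,C_{ab}\,P_b\,C_{ab}^\ast)$ to vanish; by the implication in Lemma~\ref{alge} this gives $C_{ab}=0$ for all $a,b\in A$, that is $H^\sharp_{ab}Q_b+E_{ab}=Q_aH_{ab}$. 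Conversely, if these identities hold then every $C_{ab}=0$ and $E^\sharp_0=0$ termwise.

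The main obstacle is not the computation, which is immediate once \eqref{squa} and Lemma~\ref{alge} are in hand, but rather the verification that $\widetilde{P^\sharp}_a$ is strictly positive definite: this is precisely what upgrades Lemma~\ref{alge} from ``$\tr(ACBC^\ast)\geq 0$'' to the rigidity ``$\tr(ACBC^\ast)=0\Rightarrow C=0$'', and hence is the step that actually yields the intertwining conditions $H^\sharp_{ab}Q_b+E_{ab}=Q_aH_{ab}$. I would therefore justify that strict positivity carefully, via the transpose argument of Proposition~\ref{der} together with Corollary~4.8 of \cite{K-S3}.
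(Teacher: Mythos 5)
Your proposal is correct and follows essentially the same route as the paper: the paper's proof likewise consists of applying Lemma~\ref{alge} termwise to the rewritten expression \eqref{squa}, citing the strict positive definiteness of $P_a$ and $\widetilde{P^\sharp}_a$. Your additional justification of the strict positivity of $\widetilde{P^\sharp}_a$ (via the transpose argument of Proposition~\ref{der} and Corollary~4.8 of \cite{K-S3}) is a detail the paper leaves implicit, but it does not change the argument.
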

\begin{proof}
Since $P_a$  and ${\widetilde{P^\sharp}_a}$ are strictly positive definite
for all $a,$ the result follows from the previous Lemma \ref{alge}.
\end{proof}
\begin{coro}\label{particular}
Assume that $A$ generates a free group and that\\ $(V_a, H_{ba},B_a)$ is
 an irreducible, normalized matrix system with inner products. Let
$(\widehat{V}_a,\widehat{H}_{ba},\widehat{B}_a)$ be the
twin system
as in Proposition \ref{der}. Assume that
the two systems are {\bf inequivalent.}
Then the matrix $\mcD$ defined in \eqref{matrix}
has two linearly independent eigenvectors of $1$ if and only if
there exist linear maps $Q_b:V_b\to\widehat{V}_b$ satisfying
\eqref{eqlemq} and
$$
{\widehat{H}}_{ab}Q_b+ E_{ab}=Q_a H_{ab},\; \forall {a,b\in A}.
 $$
\end{coro}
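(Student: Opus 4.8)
The plan is to obtain the statement as an immediate consequence of the two characterizations already in hand, namely Theorem~\ref{dim2} and Theorem~\ref{trace1}, specialized to the case in which the auxiliary system $({V}^{\sharp}_a,{H}^{\sharp}_{ba})$ is taken to be the twin system $(\widehat{V}_a,\widehat{H}_{ba})$. In other words, I would not prove anything new; I would chain the two equivalences and make the substitution $H^\sharp_{ba}=\widehat{H}_{ba}$.

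First I would verify that every hypothesis of the earlier results is in force. The free-group assumption guarantees, via Theorem~\ref{mumu0}, that the matrix $\mcD$ of \eqref{matrix}, built with $H^\sharp_{ba}=\widehat{H}_{ba}$, is exactly the transition matrix governing the matrix coefficients, so that the abstract statements about $\mcD$ apply to the representation-theoretic $\mcD$ at hand. Proposition~\ref{der} ensures that the twin $(\widehat{V}_a,\widehat{H}_{ba},\widehat{B}_a)$ is again an irreducible, normalized system, so that both $(V_a,H_{ba})$ and $(\widehat{V}_a,\widehat{H}_{ba})$ qualify as normalized irreducible systems, exactly as required by Proposition~\ref{pregressi}, Lemma~\ref{lemq}, Theorem~\ref{dim2}, and Theorem~\ref{trace1}. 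Inequivalence of the two systems is assumed, which is precisely the branch in which the $1$-eigenspace of $\mcD$ has dimension $2$ rather than $1$, by part A) of Proposition~\ref{pregressi}.

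Then I would simply concatenate the two equivalences. By Theorem~\ref{dim2}, the eigenspace of $\mcD$ for the eigenvalue $1$ has dimension two if and only if there exist linear maps $Q_b$ satisfying \eqref{eqlemq} for which the trace quantity ${E^\sharp}_0$ of \eqref{e0} vanishes. By Theorem~\ref{trace1}, the vanishing ${E^\sharp}_0=0$ is in turn equivalent to the relations $H^\sharp_{ab}Q_b+E_{ab}=Q_a H_{ab}$ holding for all $a,b\in A$. Substituting $H^\sharp_{ab}=\widehat{H}_{ab}$ yields precisely the asserted condition $\widehat{H}_{ab}Q_b+E_{ab}=Q_a H_{ab}$, together with \eqref{eqlemq}.

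Since both inputs are already established, there is no genuine obstacle here; the only point requiring care is that the maps $Q_b$ appearing in the two theorems are the \emph{same} maps, namely those produced by Lemma~\ref{lemq} from the fixed eigenvector $P=(P_a)$ of $D_{4,4}$. This ensures that the clause ``there exist $Q_b$ satisfying \eqref{eqlemq}'' is common to both characterizations, so that the two conditions merge, without loss of information, into the single statement that there exist $Q_b$ satisfying both \eqref{eqlemq} and the cocycle-type identity $\widehat{H}_{ab}Q_b+E_{ab}=Q_a H_{ab}$.
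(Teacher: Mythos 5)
Your proposal is correct and takes essentially the same route as the paper: the paper's own proof likewise builds $\mcD$ from the pair $(V_a,H_{ba})$ and its twin, and concludes by applying Theorem~\ref{trace1} (and hence, implicitly, Theorem~\ref{dim2}) with the same maps $Q_b$ from Lemma~\ref{lemq}. The only point the paper makes explicit that you leave implicit is the identification of the eigenvectors $P$ and $\widetilde{P^\sharp}$ with the strictly positive definite tuples $(\widehat{B}_{a^{-1}})_a$ and $(\widehat{B}_{a})_a$, which is precisely what licenses the application of Theorem~\ref{trace1}.
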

\begin{proof}
Construct the matrix $\mcD$ corresponding to the two
matrix systems $(V_a, H_{ba},B_a)$ and
$({V}^{\sharp}_a,{H}^{\sharp}_{ba},B_a^\sharp)=
(\widehat{V}_a,\widehat{H}_{ba},\widehat{B}_a)$.
Since the tuple $(B_a)$, respectively $(\widehat{B}_a) $,
 is the right eigenvector for the matrix
$(H^*_{ba}\otimes H'_{ba})_{a,b}$, respectively
$\left(\widehat{H}_{ba}^{\ast}\otimes {\widehat{H}}_{ba}'\right)_{a,b},$
a direct computation shows that
\begin{itemize}
\item
 The tuple $u=(B_{b^{-1}})_b$ is the right eigenvector of $1$ of the
 submatrix $D_{1,1}$ while
the tuple $\tilde{u}=(\widehat{B}_{a})_a$ is a right eigenvector of $1$
for the transpose matrix $(D_{1,1})'$.
\item
The tuple
$v=(\widehat{B}_{b^{-1}})_b$ is the  right eigenvector of $1$ of the submatrix
$D_{4,4}$
while the tuple $\tilde{v}=({B}_{a})_a$ is the right eigenvector of $1$ for
 the transpose matrix $(D_{4,4})'$.
\end{itemize}
 Apply now Theorem \ref{trace1}
 with
${P=v}=(\widehat{B}_{a^{-1}})_a$  and
${\widetilde{P^\sharp}=\widetilde{u}} =(\widehat{B}_{a})_a$.
\end{proof}
\begin{rem}\label{autovettori}
Observe that the matrix $\mcD$ has
\begin{equation}\label{autodx}
U=(U_j)_ {j=1,\dots, 4}=\left(\begin{array}{c}
({B}_{b^{-1}})_{b}\\
0\\
0\\
0\\
\end{array}
\right)
\end{equation}
as  right eigenvector of $1$, while the
dual matrix $\mcD'=((D_{j,i})')$
has the vector
\begin{equation}\label{autost}
\tilde{V}=(\tilde{V}_j)_ {j=1,\dots, 4}=\left(\begin{array}{c}
0\\
0\\
0\\
({B}_{a})_{a}
\end{array}\right)
\end{equation}
 as right eigenvector of $1$.
\end{rem}

The following result is essential for the
computation of \eqref{matrixcoe}.

\begin{prop}\label{wdoppio}
Let $(V_a, H_{ba},B_a)$ be
 an irreducible, normalized matrix system and let $(\widehat{V}_a,\widehat{H}_{ba}, \widehat{B}_a)$
be the twin system. Assume that
the  two systems  are inequivalent.
Define $Q_b: V_b\to\widehat{V_b}$ as in Lemma \ref{lemq} and let
 \begin{eqnarray}\label{e00}
 & &{E}_0= \\
 & &\sum_{a,b\in A}{\mathrm{tr}\!\left(\widehat{B}_a ({\widehat H}_{ab}Q_b+
 E_{ab}-Q_a H_{ab})\widehat{B}_{b^{-1}}(\widehat{ H}_{ab}Q_b+ E_{ab}-Q_a H_{ab})^{\ast}\right)}\;.\nonumber
 \end{eqnarray}
If $E_0\neq0$ there exists
a vector $W\neq 0$
such that
\begin{equation}\label{autogen}
{\mathcal D}\,W=W+\lambda U,
\end{equation}
where $U$  is, as in \eqref{autodx}
the right eigenvector of $1$ of  ${\mathcal D}$ and
$$
\lambda=
\frac{E_0}{\mathrm{tr}\left(\sum_{a\in A}{\widehat{B}_a B_{a^{-1}}}\right)}.
$$
\end{prop}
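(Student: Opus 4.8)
The plan is to build $W=(W_1,W_2,W_3,W_4)$ by the same layered procedure used in Lemma~\ref{lemq} and Theorem~\ref{dim2}, and then to read off $\lambda$ from a Fredholm-type solvability condition. Writing $\mcD W=W+\lambda U$ block by block, and recalling from \eqref{autodx} that $U=\left((B_{b^{-1}})_b,0,0,0\right)^\top$, the last three block-rows become exactly the homogeneous system \eqref{eq0} for $(W_2,W_3,W_4)$, while the first row reads
\[
(I-D_{1,1})W_1 = D_{1,2}W_2+D_{1,3}W_3+D_{1,4}W_4-\lambda U_1,
\]
with $U_1=(B_{b^{-1}})_b$. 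First I would set $W_4=P=(\widehat{B}_{a^{-1}})_a$, the eigenvector of $1$ of $D_{4,4}$, and invoke Lemma~\ref{lemq} to obtain $W_2=(P_aQ_a^\ast)_a$ and $W_3=(Q_aP_a)_a$ for maps $Q_b$ satisfying \eqref{eqlemq}. This disposes of rows two through four for every value of $\lambda$, so the whole problem collapses onto the single first block-row.

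The remaining task is to choose $\lambda$ so that this first row is solvable for $W_1$. Its right-hand side is $T-\lambda U_1$, where $T=(T_a)_a$ is precisely the vector computed in the proof of Theorem~\ref{dim2}, namely
\[
T_a=\sum_{b\in A}\left(E_{ab}P_bQ_b^\ast\widehat{H}_{ab}^\ast+\widehat{H}_{ab}Q_bP_bE_{ab}^\ast+E_{ab}P_bE_{ab}^\ast\right).
\]
Since $D_{1,1}$ has eigenvalue $1$, the equation $(I-D_{1,1})W_1=T-\lambda U_1$ is solvable if and only if $T-\lambda U_1$ lies in $\mathrm{Im}(I-D_{1,1})=\left(\mathrm{Ker}((I-D_{1,1})')\right)^\perp$, and $\mathrm{Ker}((I-D_{1,1})')$ is the line spanned by $\widetilde{P^\sharp}=(\widehat{B}_a)_a$ (the eigenvector of $(D_{1,1})'$ identified in the proof of Corollary~\ref{particular}). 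Hence solvability is the single scalar condition $\mathrm{tr}(\widetilde{P^\sharp}(T-\lambda U_1))=0$.

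Now I would evaluate the two traces. By \eqref{e0}--\eqref{squa} specialized to the twin system as in Corollary~\ref{particular} (with $P=(\widehat{B}_{a^{-1}})_a$ and $\widetilde{P^\sharp}=(\widehat{B}_a)_a$), one has $\mathrm{tr}(\widetilde{P^\sharp}T)=E_0$, the quantity of \eqref{e00}, while pairing $\widetilde{P^\sharp}=(\widehat{B}_a)_a$ against $U_1=(B_{a^{-1}})_a$ gives $\mathrm{tr}(\widetilde{P^\sharp}U_1)=\mathrm{tr}\left(\sum_{a\in A}\widehat{B}_aB_{a^{-1}}\right)$. The solvability condition therefore reads $E_0-\lambda\,\mathrm{tr}\left(\sum_{a}\widehat{B}_aB_{a^{-1}}\right)=0$, which forces exactly the asserted value of $\lambda$. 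Because $\widehat{B}_a$ and $B_{a^{-1}}$ are both strictly positive definite the denominator is positive, so $\lambda$ is well defined, and since $E_0\neq0$ it is nonzero. With this $\lambda$ the first row is solvable, producing $W_1$ and hence a full vector $W$; as $W_4=P\neq0$ we obtain $W\neq0$ with $\mcD W=W+\lambda U$, as required.

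The step I expect to be the main obstacle is the bookkeeping of dual-space identifications needed to justify $\mathrm{tr}(\widetilde{P^\sharp}T)=E_0$ and $\mathrm{tr}(\widetilde{P^\sharp}U_1)=\mathrm{tr}\left(\sum_a\widehat{B}_aB_{a^{-1}}\right)$: one must check that each $B_{a^{-1}}$ and $\widehat{B}_a$ is being read in the correct space $\widehat{V}_a\otimes\overline{\widehat{V}_a}$ or its dual, so that the trace pairings are meaningful and match \eqref{e0}. Everything else is the Fredholm alternative applied to the single block $D_{1,1}$, which is routine once the orthogonality description of $\mathrm{Im}(I-D_{1,1})$ from Theorem~\ref{dim2} is in hand.
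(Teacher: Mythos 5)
Your proposal is correct and follows essentially the same route as the paper: determine $(W_2,W_3,W_4)$ from Lemma \ref{lemq} with $P=(\widehat{B}_{a^{-1}})_a$, then fix $\lambda$ by requiring the right-hand side of the first block-row to be orthogonal to $\mathrm{Ker}((I-D_{1,1})')=\C\,(\widehat{B}_a)_a$, which yields $\lambda=E_0/\mathrm{tr}\bigl(\sum_a\widehat{B}_aB_{a^{-1}}\bigr)\neq0$. The dual-space bookkeeping you flag is exactly the identification already carried out in Corollary \ref{particular} and the equality of \eqref{e0} with \eqref{squa}, so no new work is needed there.
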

\begin{proof}
By Proposition \ref{pregressi} and Corollary \ref{particular}
the Jordan block of  ${\mathcal D}$ corresponding to eigenvalue $1$
is of the form $\left(\begin{array}{ll}
1&1\\
0&1
\end{array}\right)$.

We are seeking for a nonzero vector $W$ such that
${\mathcal D}\,W=W+\lambda U$ for some nonzero $\lambda$.

 Write
\begin{align}
W &=(W_i)_{i=1,\dots,4}=((W_{i,a})_{a})_{i=1,\dots,4},\nonumber\\
U &=(U_j)_{j=1,\dots,4}=(\delta_{j1}(B_{b^{-1}})_{b})_ {j=1,\dots, 4},\quad
\delta_{j,i}=\left\{\begin{aligned}
& 1\quad\mbox{if }\; i=j\\
& 0\quad\mbox{if }\; i\neq j,
\end{aligned}\right.
\nonumber
\end{align}
and use
Lemma \ref{lemq} (with $V^\sharp_a=\widehat{V}_a$, $H^\sharp_{ab}=\widehat{H}_{ab}$,
${P_a}=(\widehat{B}_{a^{-1}})$)   to get
the three last components of $W$:
\begin{equation*}
\left(\begin{array}{c}
W_2\\ W_3\\ W_4\end{array}\right)\quad=\quad
\left(\begin{array}{c}
(\widehat{B}_{b^{-1}}\, Q_b^{\ast})\\
(Q_b\,\widehat{B}_{b^{-1}})\\
(\widehat{B}_{b^{-1}})\end{array}
\right)
\end{equation*}

Let us turn to the condition about the first component $W_{1}$.
Write, as in Theorem \ref{dim2}:
\begin{eqnarray*}
& &(I- D_{1,1})W_1=\\
& &\!\!\!\!\!
\left(\!\!-\lambda B_{a^{-1}}+\sum_{b\in A}{\left[ E_{ab}\widehat{B}_{b^{-1}} Q_b^{\ast}{\widehat{H}}_{ab}^{\ast}+
{\widehat{H}}_{ab}Q_b \widehat{B}_{b^{-1}}{E}_{ab}^{\ast}+
 E_{ab}\widehat{B}_{b^{-1}}{E}_{ab}^{\ast}\right]}\right)_{a}\!\!,
\end{eqnarray*}
and require that   the right hand side is perpendicular to the kernel
of
${(I- {D_{1,1}})'}$, which
is the one  dimensional subspace generated by
 $\tilde{u}=(\widehat{B}_{a})_a$.
As in Theorem \ref{dim2} this means to require
$$0=-\lambda\;\mathrm{tr}\left(\sum_{a\in A}{\widehat{B}_a B_{a^{-1}}}\right)+ E_0,$$
i.e.
$$\lambda=
\frac{ E_0}{\mathrm{tr}\left(\sum_{a\in A}{\widehat{B}_a B_{a^{-1}}}\right)}\neq 0.$$
\end{proof}
%

 \begin{lem}\label{lemma5.4}
Let $(V_a, H_{ba},B_a)$ be a normalized irreducible system and let
 $(\widehat{V}_a,\widehat{H}_{ba}, \widehat{B}_a)$ be
the twin system. Assume that the two systems are inequivalent.
For any $\varepsilon>0$, $v_a\in V_a$ and $v_b\in V_b$ let
\begin{equation*}
\|\phi_\varepsilon^{v_a,v_b}\|^2_2=
\sum_{x\in\Gamma}{|<\mu[e,a,v_a],\pi(x)\mu[e,b,v_b]>|^2 e^{-\varepsilon |x|}}\;.
\end{equation*}
Then $\|\phi_\varepsilon^{v_a,v_b}\|^2_2\simeq1/\varepsilon$ if and only if
the quantity $E_0$, defined in \eqref{e00}, is equal to zero.
Moreover, when
$E_0\neq0$ one has
$$
\|\phi_\varepsilon^{v_a,v_b}\|^2_2=
 \frac{E_0}{k_0^2}\varepsilon^{-2}B_a(v_a,v_a)B_b(v_b,v_b)
 + o(\varepsilon^{-2}),\quad \mathit{as}\quad \varepsilon\rightarrow 0,
$$
where
 $k_0=\sum_{c\in A}{\mathrm{tr}(\widehat{B}_c B_{c^{-1}})}.$
 \end{lem}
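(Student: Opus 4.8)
The plan is to read the asymptotics of $\|\phi_\varepsilon^{v_a,v_b}\|_2^2$ off the resolvent of $\mathcal D$ at its eigenvalue $1$. Setting $t=e^{-\varepsilon}$ and summing the geometric series produced by Theorem~\ref{mumu0}, I would record, exactly as in the proof of Theorem~\ref{teorema1}, that
$$
\|\phi_\varepsilon^{v_a,v_b}\|_2^2=\sum_{c,d\in A}\psi(\varepsilon,c,d)=t\,R\,(I-t\mathcal D)^{-1}\,S,
\qquad R=\sum_{d\in A}R(d),\quad S=\sum_{c\in A}S(c),
$$
with $R(d),S(c)$ the vectors of \eqref{erre}, \eqref{esse}. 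Since $t\to1$ as $\varepsilon\to0^+$ while every eigenvalue $\mu\neq1$ of $\mathcal D$ satisfies $1-t\mu\to1-\mu\neq0$, the factors coming from $\mu\neq1$ stay bounded; hence the only singular contribution is that of the generalized eigenspace $K_1$ of $1$, and the order of blow-up equals the size of the largest Jordan block of $\mathcal D$ on $K_1$.

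For the equivalence I would just assemble results already proved. By Corollary~\ref{particular} together with Theorem~\ref{trace1}, the eigenspace of $1$ has dimension $d=2$ precisely when $E_0=0$ (with $Q_b$ the maps furnished by Lemma~\ref{lemq}) and $d=1$ precisely when $E_0\neq0$. Feeding this into Theorem~\ref{teorema1}, case A), yields $\|\phi_\varepsilon^{v_a,v_b}\|_2^2\simeq1/\varepsilon$ exactly when $E_0=0$ and $\simeq1/\varepsilon^2$ otherwise; the positivity of the leading coefficient needed for the lower bound in $\simeq1/\varepsilon$ is already contained in that theorem.

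It then remains to pin down the constant when $E_0\neq0$, where $K_1$ is a single Jordan block of size two. By Proposition~\ref{wdoppio} there is a vector $W$ with $\mathcal D\,W=W+\lambda U$, where $U$ is the eigenvector \eqref{autodx} and $\lambda=E_0/k_0$. Writing $\mathcal D=I+\lambda N$ on $K_1=\mathrm{span}\{U,W\}$ with $NW=U$, $NU=0$, the nilpotent expansion gives $(I-t\mathcal D)^{-1}|_{K_1}=\frac{1}{1-t}I+\frac{t\lambda}{(1-t)^2}N$; projecting $S$ as $P_1S=\alpha U+\beta W$ and recalling $1-t\sim\varepsilon$, the top-order part is
$$
\|\phi_\varepsilon^{v_a,v_b}\|_2^2=\frac{t^2\lambda\,\beta}{(1-t)^2}\,R(U)+O\!\left(\tfrac1\varepsilon\right)
=\frac{\lambda\,\beta\,R(U)}{\varepsilon^2}+o\!\left(\tfrac1{\varepsilon^2}\right).
$$
The coefficient $\beta$ is extracted by pairing against the left eigenvector $\tilde V$ of \eqref{autost}: since $\tilde V$ kills every generalized eigenspace for $\mu\neq1$ and $\langle\tilde V,U\rangle=0$, one gets $\beta=\langle\tilde V,S\rangle/\langle\tilde V,W\rangle$.

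The final step is a direct evaluation of $\langle\tilde V,W\rangle$, $\langle\tilde V,S\rangle$, $R(U)$ through the trace identity \eqref{nove}. From $W_4=(\widehat B_{b^{-1}})_b$ and $\tilde V_4=(B_a)_a$ one finds $\langle\tilde V,W\rangle=\sum_a\mathrm{tr}(\widehat B_{a^{-1}}B_a)=k_0$; the Kronecker deltas in \eqref{esse} collapse the fourth block of $S$ to $v_a\otimes\overline{v_a}$ at the single index $a$, giving $\langle\tilde V,S\rangle=B_a(v_a,v_a)$; dually the first block of $R$ collapses to $v_b\otimes\overline{v_b}$ paired against $U_1=(B_{b^{-1}})_b$, giving $R(U)=B_b(v_b,v_b)$. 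Substituting $\lambda=E_0/k_0$ and $\beta=B_a(v_a,v_a)/k_0$ produces the claimed
$$
\|\phi_\varepsilon^{v_a,v_b}\|_2^2=\frac{E_0}{k_0^2}\,\varepsilon^{-2}\,B_a(v_a,v_a)\,B_b(v_b,v_b)+o(\varepsilon^{-2}).
$$
The main obstacle is exactly this bookkeeping: matching each block of $S$ (resp. $R$) to the correct component of $\tilde V$ (resp. $U$) under the identifications $\widehat V_a=\overline V_{a^{-1}}'$ and $V_1\otimes V_2\cong\mathscr L(V_2',V_1)$, and checking that the surviving deltas reduce every sum to a single term giving precisely the scalar forms $B_a(v_a,v_a)$ and $B_b(v_b,v_b)$; the rest is forced by the size-two Jordan structure established earlier.
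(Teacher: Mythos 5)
Your proposal is correct and follows essentially the same route as the paper: both arguments reduce the asymptotics to the resolvent of $\mathcal D$ on the two-dimensional generalized $1$-eigenspace spanned by $U$ and the vector $W$ of Proposition~\ref{wdoppio}, extract the relevant coefficient by pairing with the left eigenvector $\tilde V$ (the paper's functional $S_2=\beta\tilde V^{\top}$ with $\beta=1/k_0$ is exactly your normalization $\langle\tilde V,S\rangle/\langle\tilde V,W\rangle$), and evaluate the same traces $\langle\tilde V,S(c)\rangle=B_a(v_a,v_a)\delta(c^{-1}a)$ and $R(d)U=B_b(v_b,v_b)\delta(db)$. The only cosmetic difference is that you phrase the singular part via the nilpotent expansion $(I-t\mathcal D)^{-1}|_{K_1}=\frac{1}{1-t}I+\frac{t\lambda}{(1-t)^2}N$ rather than writing $\mathcal D$ in a Jordan-adapted basis.
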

\begin{proof}
The first assertion follows from Theorems \ref{teorema1} and \ref{dim2}.
Assume hence that $E_0\neq0$ and use Theorem \ref{mumu0} to compute
\begin{eqnarray}
& &\sum_{J=1}^{+\infty}
{\sum_{\atopn{|x|=J}{x\in\Gamma(c)\cap\widetilde{\Gamma}(d)}}
{|<\mu[e,a,v_a],\pi(x)\mu[e,b,v_b]>|^2 e^{-\varepsilon|x|}}}\label{mumuep}\\
& &=
e^{-\varepsilon}R(d)\,(I-{\mathcal D} e^{-\varepsilon})^{-1}\, S(c),\nonumber
\end{eqnarray}
where vectors $R(d)$ and $S(c)$ are defined in
\eqref{erre}, \eqref{esse} and depend only on $v_a$ and $v_b$.

 Let us estimate the quantity on the right side in the above equality.
Denote by $\mathscr{L}$
the finite dimensional space on which $\mcD$
acts and by $K_1$ the generalized eigenspace of $1$.
Use Corollary~\ref{particular} and Proposition~\ref{wdoppio} to see that
$K_1$ is spanned by the vectors $U$ and $W$
provided by equations \eqref{autodx} and \eqref{autogen} and
take a basis of $\mathscr{L}$
which starts with $U$,  $W$ and
 ends with
generalized eigenvectors of ${\mathcal D}$
corresponding to eigenvalues
 different from $1$.
%
With respect to this basis $\mcD$ has the following
expression:
$$
\mcD=\left(
 \begin{array}{ccccc}
 1&\lambda&0&\dots& 0\\
 0&1&0&\dots& 0\\
 \vdots&\vdots&  & \mcF & \\
 0&0& &\dots&
 \end{array}\right)
$$
where the matrix $\mcF$ does not have the eigenvalue $1$.
Then
 $$
 (I-{\mathcal D} e^{-\varepsilon})^{-1}=\left(
 \begin{array}{ccccc}
 0&\lambda\varepsilon^{-2}&0&\dots& 0\\
 0&0&0&\dots& 0\\
 \vdots&\vdots& \vdots&\ddots &\vdots\\
 0&0&0&\dots& 0
 \end{array}\right)+O(\varepsilon^{-1}),\quad\mathrm{as}\quad\varepsilon\rightarrow 0.$$
For every vector $S\in\mathscr{L}$ write
 $S=s_1\,U+s_2\,W+\,u_{\mathrm{other}},$ where
$u_{\mathrm{other}}$ has a zero component in  $K_1$.
Then
$$(I-{\mathcal D} e^{-\varepsilon})^{-1}S=
\lambda\varepsilon^{-2}\,s_2 \,U+O(\varepsilon^{-1})(U+W+u_{\mathrm{other}}).$$
So, for any (row) vector $R\in\mathscr{L},$
\begin{eqnarray*}
e^{-\varepsilon}R\,(I-{\mathcal D} e^{-\varepsilon})^{-1}\, S&=&
e^{-\varepsilon}\lambda\varepsilon^{-2}\,s_2 R\,U+o(\varepsilon^{-2})   \\
&=&\lambda\varepsilon^{-2}\,s_2 R\,U+o(\varepsilon^{-2}).
\end{eqnarray*}

Let us denote by  $S_2$ the linear functional on $\mathscr{L}$
which associates the
second coordinate $s_2$ in our chosen basis.
According to   \eqref{otto}, one has
 $$S_2(S)={\textrm{tr}}(S_2\, S)$$
for a suitable row vector 
that we still denote by $S_2$.

We claim that $S_2$ is a left eigenvector of ${\mathcal D}$ corresponding to
eigenvalue $1$. Indeed, for
 $S=s_1\,U+s_2\,W+u_{\mathrm{other}},$ we have
\begin{eqnarray*}
S_2(I-{\mathcal D})S&=&S_2(I-{\mathcal D})
(s_1\,U+s_2\,W+ u_{\mathrm{other}})\\
&=&S_2(s_2\,W-s_2\,W-s_2\lambda\, U+(I-\mcD)u_{\mathrm{other}})\\
&=&S_2(-s_2\lambda\, U+0\cdot W+
w_{\mathrm{other}})\\
&=&0.
\end{eqnarray*}
As observed in Remark \ref{autovettori},
$S_2$ is proportional to the transpose vector of
 $\tilde{V}$ as defined in \eqref{autost}, so that there exists
 $\beta\in\C$ such that
$$S_2=(\begin{array}{cccc}
    0&0&0&\,\beta({B}_{a})_{a\in A}
\end{array}).$$
To compute $\beta$ let us recall that
$$W=\left(\begin{array}{c}
\ast\\
\vdots\\
\ast\\
(\widehat{B}_{a^{-1}})_{a\in A}
\end{array}\right),$$
hence
$$1=S_2(0\cdot u+W+0\cdot u_{\mathrm{other}})= {\textrm{tr}}(S_2\,W)=
\beta\sum_{a\in A}{{\textrm{tr}}({B}_{a}\,\widehat{B}_{a^{-1}})},$$
yields
$$\beta=\frac{1}{\sum_{a\in A}{{\textrm{tr}}({B}_{a}\,\widehat{B}_{a^{-1}})}}.$$
Finally, specifying  $R=R(d)$ and $S=S(c)$ defined  in \eqref{erre}
and\eqref{esse},
 \begin{eqnarray*}
R(d)\,(I-{\mathcal D} e^{-\varepsilon})^{-1}\, S(c)&=&
\varepsilon^{-2}\,\frac{E_0}{k_0^2}\,
{\textrm{tr}}(S_2 S(c))( R(d)\,U)+o(\varepsilon^{-2}).\end{eqnarray*}
The trace is given by
$${\textrm{tr}}(S_2 S(c))=
\sum_{s\in A}{{\textrm{tr}}(
{B}_{s}(v_{a}\otimes \overline{v_{a}})\,\delta(c^{-1}a)\,\delta(c^{-1}s))}=
B_c(v_c,v_c)\delta(c^{-1}a),$$
while
$$R(d)U=\sum_{r\in A}{(v_{b}\otimes \overline{v_{b}})\,
\delta{(db)} \,\delta(d^{-1} r)\,B_{r^{-1}}}=
B_{d^{-1}}(v_{d^{-1}},v_{d^{-1}})\delta{(db)}.$$
By summation on $c,d\in A$ we get from  \eqref{mumuep}
\begin{eqnarray*}
& &\sum_{x\in\Gamma}{|<\mu[e,a,v_a],\pi(x)\mu[e,b,v_b]>|^2 e^{-\varepsilon |x|}}\\
&=&
 \varepsilon^{-2}\,\frac{E_0}{k_0^2}
 \sum_{c,d\in A}{B_c(v_c,v_c)\delta(c^{-1}a)\,
  B_{d^{-1}}(v_{d^{-1}},v_{d^{-1}})\delta{(db)}}+o(\varepsilon^{-2})\\
 &=&
 \varepsilon^{-2}\,\frac{E_0}{k_0^2}\,B_a(v_a,v_a)\,
  B_{b}(v_{b},v_{b})+o(\varepsilon^{-2}).
  \end{eqnarray*}
\end{proof}

We proceed now with the computation of the limits that are needed to prove
Theorem \ref{main}.

\begin{lem}\label{lem3.2}
Let $c\in A,$ $f_1,f_2\in \mathcal{H}$ such that
$\mathrm{supp\,}f_i\subset\Gamma\setminus\Gamma(c),$ and $g_1,g_2\in \mathcal{H}.$
Then
\begin{eqnarray*}
& &\limsup_{\varepsilon\rightarrow 0^+}
\varepsilon\sum_{x\in\Gamma(c)}
{|<f_1,\pi(x)g_1><f_2,\pi(x)g_2>|\; e^{-\varepsilon |x|}}\\
& &\leq
\|f_1\|\|f_2\|\|g_1\|\|g_2\|\;.
\end{eqnarray*}
\end{lem}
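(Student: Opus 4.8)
The plan is to reduce the product estimate to a single squared sum by Cauchy--Schwarz, and then to exploit the support hypothesis on the $f_i$ through a covariance identity that collapses the matrix coefficient onto a small piece of $g_i$, from which the constant $1$ emerges automatically.

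First I would apply Cauchy--Schwarz to the sum over $x$, splitting $\varepsilon=\sqrt{\varepsilon}\cdot\sqrt{\varepsilon}$ and $e^{-\varepsilon|x|}=e^{-\varepsilon|x|/2}e^{-\varepsilon|x|/2}$, to get
$$\varepsilon\sum_{x\in\Gamma(c)}|<f_1,\pi(x)g_1>|\,|<f_2,\pi(x)g_2>|\,e^{-\varepsilon|x|}\le a_\varepsilon(f_1,g_1)^{1/2}\,a_\varepsilon(f_2,g_2)^{1/2},$$
where $a_\varepsilon(f,g)=\varepsilon\sum_{x\in\Gamma(c)}|<f,\pi(x)g>|^2 e^{-\varepsilon|x|}$. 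Since the $\limsup$ of a product of nonnegative quantities is at most the product of the (finite) $\limsup$'s, it suffices to prove the diagonal bound $\limsup_{\varepsilon\to0^+}a_\varepsilon(f,g)\le\norm{f}^2\norm{g}^2$ for every $g\in\mathcal{H}$ and every $f$ with $\mathrm{supp}\,f\subset\Gamma\setminus\Gamma(c)$.

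For the diagonal bound the crucial step is a covariance identity. Write $Q=\pi(\1_{\Omega\setminus\Omega(c)})$ for the projection onto functions supported off $\Gamma(c)$, so the hypothesis on $f$ reads $Qf=f$ and hence $<f,\pi(x)g>=<f,Q\pi(x)g>$. Using the boundary relation $\pi(x)\pi(F)\pi(x^{-1})=\pi(\lambda(x)F)$ together with the set computation $\Omega\setminus x^{-1}\Omega(c)=\Omega(x^{-1})$, which holds precisely because $x\in\Gamma(c)$, I would establish
$$Q\pi(x)g=\pi(x)\,\pi(\1_{\Omega(x^{-1})})g,\qquad x\in\Gamma(c).$$
Equivalently, one checks pointwise that for $y\notin\Gamma(c)$ one has $x^{-1}y\in\Gamma(x^{-1})$, so only the values of $g$ on $\Gamma(x^{-1})$ enter the coefficient. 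Since $\pi(x)$ is unitary and $\pi(\1_{\Omega(x^{-1})})$ is a projection, this gives the pointwise estimate $|<f,\pi(x)g>|^2\le\norm{f}^2\,\norm{\pi(\1_{\Omega(x^{-1})})g}^2$.

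Next I would sum over $x\in\Gamma(c)$ of a fixed length $n$. As $x$ ranges over such words, $x^{-1}$ ranges over distinct words of length $n$ (all ending in $c^{-1}$), so the cylinders $\Omega(x^{-1})$ are pairwise disjoint and $\sum_{|x|=n,\,x\in\Gamma(c)}\1_{\Omega(x^{-1})}\le\1_\Omega$; therefore $\sum_{|x|=n,\,x\in\Gamma(c)}\norm{\pi(\1_{\Omega(x^{-1})})g}^2\le\norm{g}^2$. This yields the per-length bound $\sum_{|x|=n,\,x\in\Gamma(c)}|<f,\pi(x)g>|^2\le\norm{f}^2\norm{g}^2$, whence
$$a_\varepsilon(f,g)=\varepsilon\sum_{n\ge0}e^{-\varepsilon n}\!\!\sum_{|x|=n,\,x\in\Gamma(c)}\!\!|<f,\pi(x)g>|^2\le\frac{\varepsilon}{1-e^{-\varepsilon}}\,\norm{f}^2\norm{g}^2,$$
and letting $\varepsilon\to0^+$, where $\varepsilon/(1-e^{-\varepsilon})\to1$, finishes the diagonal bound and hence the lemma. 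I expect the only genuine subtlety to be the covariance identity, i.e.\ the verification that the relevant cylinder is exactly $\Omega(x^{-1})$; once the support of $f$ is traded for the localization $\pi(\1_{\Omega(x^{-1})})g$ of $g$, the disjointness of the cylinders and the elementary limit $\varepsilon/(1-e^{-\varepsilon})\to1$ produce the sharp constant $1$ with no further work.
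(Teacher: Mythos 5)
Your argument is correct, and it is worth noting that the paper itself gives no internal proof of this lemma: it simply cites Lemma~3.2 of \cite{K-S2}, so there is no in-text argument to compare against. Your route --- Cauchy--Schwarz to reduce to the diagonal bound, then the covariance identity $\pi(\1_{\Om\setminus\Om(c)})\pi(x)=\pi(x)\pi(\1_{\Om(x^{-1})})$ for $x\in\G(c)$ (which rests on the correct set identity $x^{-1}(\Om\setminus\Om(c))=\Om(x^{-1})$, valid because $x^{-1}$ ends in $c^{-1}$ so no cancellation occurs against words not starting with $c$), followed by the mutual orthogonality of the projections $\pi(\1_{\Om(x^{-1})})$ over the distinct length-$n$ words $x^{-1}$ --- is sound and produces the sharp constant $1$ via $\varepsilon/(1-e^{-\varepsilon})\to1$. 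The only point that deserves an explicit word is the meaning of $\mathrm{supp}\,f\subset\G\setminus\G(c)$ for a general $f$ in the completion $\mathcal{H}$: the correct (and the paper's implicit) reading is $\pi(\1_{\Om(c)})f=0$, which is exactly what your identity $Qf=f$ uses, so your proof goes through for all such $f$ without any density argument.
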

\begin{proof}
See Lemma 3.2 of \cite{K-S2}.
\end{proof}
\begin{lem}\label{sfdef}
Let $c\in A$ be fixed and let $f\in {\mathcal H}$ such that
$\mathrm{supp\,}f\subset\Gamma\setminus\Gamma(c).$
If $g\in{\mathcal{H}}^{\infty},$ $x\in \widetilde{\Gamma}(c^{-1}),$ is of suitable length, then
\begin{equation}\label{preliminari}
<f,\mu[c,e,g(x)]>=<f,\pi(x^{-1})g>.
\end{equation}
\end{lem}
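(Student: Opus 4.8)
The plan is to reduce the stated identity to an equality of two multiplicative functions on the ``half-tree'' $\Gamma\setminus\Gamma(c)$, exploiting that the hypothesis $\mathrm{supp}\,f\subset\Gamma\setminus\Gamma(c)$ is exactly the assertion that $f$ is fixed by an orthogonal projection. Set $P=\pi_\Omega(\1_{\Omega\setminus\Omega(c)})=I-\pi_\Omega(\1_{\Omega(c)})$, so that $Ph=\1_{\Gamma\setminus\Gamma(c)}\,h$ for every $h$, using $\pi_\Omega(\1_{\Omega(y)})h=\1_{\Gamma(y)}h$. Since $\1_{\Omega\setminus\Omega(c)}$ is a real idempotent of $C(\Omega)$ and $\pi_\Omega$ is a $\ast$-representation, $P$ is a self-adjoint idempotent, and the hypothesis on $f$ reads precisely $Pf=f$. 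Hence
$$
<f,\pi(x^{-1})g>=<Pf,\pi(x^{-1})g>=<f,P\pi(x^{-1})g>,
$$
so the entire lemma reduces to the function-level identity $P\pi(x^{-1})g=\mu[c,e,g(x)]$ in $\mathcal{H}^\infty$.

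To prove that identity I would first note that, since $x\in\widetilde{\Gamma}(c^{-1})$ has suitable length, the relation \eqref{molti} gives $g(x)\in V_{c^{-1}}$, so $\mu[c,e,g(x)]$ is well defined by \eqref{befana}. Because $(\pi(x^{-1})g)(z)=g(xz)$, the identity amounts to checking, for every $z\in\Gamma$, that $\1_{\Gamma\setminus\Gamma(c)}(z)\,g(xz)=\mu[c,e,g(x)](z)$. On $\Gamma(c)$ both sides vanish, since $\mu[c,e,\cdot]$ is supported on $\Gamma(c,e)=\Gamma\setminus\Gamma(c)$; and for $z=e$ both sides equal $g(x)$.

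The only genuine computation is the case $z=a_1\cdots a_m\neq e$ with $a_1\neq c$. Writing $x=x'c^{-1}$ with $|x|=|x'|+1$, the letter $c^{-1}a_1\neq e$, so $xz=x'c^{-1}a_1\cdots a_m$ is reduced with no cancellation at the junction. Choosing $|x|$ large enough that $g$ obeys \eqref{molti} from $x'$ onward, the multiplicative recursion yields
$$
g(xz)=H_{a_m a_{m-1}}\cdots H_{a_2 a_1}\,H_{a_1 c^{-1}}\,g(x),
$$
where the innermost factor records $g(xa_1)=g(x'c^{-1}a_1)=H_{a_1 c^{-1}}g(x'c^{-1})=H_{a_1 c^{-1}}g(x)$. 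On the other hand, since $z$ starts with $a_1\neq c$, unwinding \eqref{befana} and the recursion defining $\mu[e,a_1,\cdot]$ gives
$$
\mu[c,e,g(x)](z)=\mu[e,a_1,H_{a_1 c^{-1}}g(x)](z)=H_{a_m a_{m-1}}\cdots H_{a_2 a_1}\,H_{a_1 c^{-1}}\,g(x),
$$
so the two sides coincide. This establishes $P\pi(x^{-1})g=\mu[c,e,g(x)]$, and combined with the displayed reduction finishes the proof.

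I expect the main obstacle to be the bookkeeping in the last paragraph: one must check that, for $x$ of suitable length (roughly $|x|\geq N(g)+1$), every intermediate word $x,\,xa_1,\,xa_1a_2,\dots$ lies in the multiplicative regime of $g$, and that the absence of cancellation at the junction — guaranteed by $x\in\widetilde{\Gamma}(c^{-1})$ together with $a_1\neq c$ — is exactly what lets the single transition $H_{a_1 c^{-1}}$ bridge the values of $g$ across the vertex $x$. Everything else, namely the projection identity and the support and $z=e$ cases, is formal.
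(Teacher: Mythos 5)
Your argument is correct, and it reorganizes the proof along a genuinely different route from the paper's. The paper first approximates $f$ (via Lemma \ref{lem3.2} and density) by functions in $\mathcal{H}^{\infty}$ supported in $\Gamma\setminus\Gamma(c)$, and then verifies \eqref{preliminari} for such $f$ by writing out the inner product as the sphere sum $\sum_{|z|=N}\sum_{a}B_a(f(za),g(xza))$ and observing that each word $xza$ occurring there is reduced and satisfies $g(xza)=\mu[c,e,g(x)](za)$. You instead prove the stronger, function-level identity $\pi_\Omega(\1_{\Omega\setminus\Omega(c)})\pi(x^{-1})g=\mu[c,e,g(x)]$ and then move the projection onto $f$ by self-adjointness, which dispenses with the approximation of $f$ altogether and isolates the combinatorial content (no cancellation at the junction $c^{-1}a_1$, plus the recursions \eqref{molti} and \eqref{befana}) in a single pointwise check. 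The computational core is identical in the two proofs --- it is exactly the identity $g(xz)=\mu[c,e,g(x)](z)$ for $z\notin\Gamma(c)$ --- but your packaging trades the paper's density-and-continuity step for the observation that $\1_{\Omega\setminus\Omega(c)}$ is a real idempotent of $C(\Omega)$ (because $\Omega(c)$ is clopen), so $\pi_\Omega$ sends it to a self-adjoint projection fixing $f$. The only point to keep explicit, which you do flag, is the quantification of ``suitable length'': $|x|\geq N(g)+1$ ensures that the transition $H_{a_1c^{-1}}$ across the vertex $x$ and all subsequent transitions are governed by \eqref{molti}.
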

\begin{proof} By the previous Lemma
we may approximate $f$  with functions in ${\mathcal H}^{\infty}$
supported in  $\Gamma~\setminus~\Gamma(c)$.
For those functions one has
$$
<f,\pi(x^{-1})g> =
\sum_{|z|=N}\sum_{\atopn{a\in A }{|za|=N+1}}{B_a(f(za),g(xza))},
$$
where $N$ is big enough so  that both $f$ and $\pi(\inv x)g$
are multiplicative for $|z|>N$.

Since $f$ vanishes on words starting with $c$ and $x\in\widetilde\Gamma(\inv c)$
all the words $xza$ appearing in the above sum are reduced. Moreover, since
$g$ is multiplicative, one has
$$
g(xza)=\mu[c,e,g(x)](za)
$$
and  \eqref{preliminari} follows by adding up over all $z$.
\end{proof}

\begin{lem}\label{lem3.3}
Let $c\in A,$  $f\in \mathcal{H}$ such that
$\mathrm{supp\,}f\subset\Gamma\setminus\Gamma(c);$ let $g\in\mathcal{H}.$ Then
there exists an absolute constant, $k_0>0$ and there exists the limit
$$\lim_{\varepsilon\rightarrow 0^+}{\varepsilon\sum_{x\in\Gamma(c)}
{|<f,\pi(x)g> |^2 e^{-\varepsilon| x|}}}=
\frac{1}{k_0}\,\widehat{B}_c(Sf,Sf)\,\|g\|^2,$$
where $Sf$ is the antilinear functional on $V_{c^{-1}}$
defined by the rule
$$
Sf({v_{c^{-1}}})=<f,\mu[c,e,v_{c^{-1}}]>
$$
and $k_0=\sum_{a\in A}{\mathrm{tr}(\widehat{B}_a B_{a^{-1}})}$.
\end{lem}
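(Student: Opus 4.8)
The plan is to first reduce the general vector $g\in\mathcal{H}$ to the dense subspace $\mathcal{H}^\infty$, then to exploit the multiplicative structure of $g$ so that the shell sums become an iterated application of the compatibility operator of Proposition~\ref{der}, and finally to read off the limit from the spectrum of that operator.

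First I would reduce to $g\in\mathcal{H}^\infty$. Taking $f_1=f_2=f$ and $g_1=g_2=g$ in Lemma~\ref{lem3.2} bounds $\limsup_{\varepsilon\to0^+}\varepsilon\sum_{x\in\Gamma(c)}|<f,\pi(x)g>|^2e^{-\varepsilon|x|}$ by $\|f\|^2\|g\|^2$; applying the same lemma to the cross terms $||<f,\pi(x)g>|^2-|<f,\pi(x)g'>|^2|\le(|<f,\pi(x)g>|+|<f,\pi(x)g'>|)\,|<f,\pi(x)(g-g')>|$ shows that both the $\limsup$ and the $\liminf$ in $g$ are locally Lipschitz, while the candidate right-hand side $\frac1{k_0}\widehat{B}_c(Sf,Sf)\|g\|^2$ is manifestly continuous in $g$. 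Hence it suffices to treat $g\in\mathcal{H}^\infty$, the general case following by density. For such $g$ and $|x|$ large, Lemma~\ref{sfdef} applied to $x^{-1}\in\widetilde{\Gamma}(c^{-1})$ gives $<f,\pi(x)g>=<f,\mu[c,e,g(x^{-1})]>=Sf(g(x^{-1}))$, with $Sf\in\widehat{V}_c=V_{c^{-1}}^{\ast}$. Reindexing $z=x^{-1}$ and discarding the finitely many short words (which contribute $O(\varepsilon)$ to the Abel sum and so cannot affect the limit) reduces the claim to the shell sums $a_n:=\sum_{z\in\widetilde{\Gamma}(c^{-1}),\,|z|=n}|Sf(g(z))|^2$.

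Next I would propagate these sums inward. For a tuple $\Theta=(\Theta_b)$ of sesquilinear forms $\Theta_b$ on $V_b$ set $\Phi_n(\Theta)=\sum_b\sum_{w\in\widetilde{\Gamma}(b),\,|w|=n}\Theta_b(g(w),g(w))$. Peeling the last letter off a reduced word $z=z'b$ and using the multiplicativity $g(z'b)=H_{ba}g(z')$ (with $z'$ ending in $a$) yields the recursion $\Phi_n(\Theta)=\Phi_{n-1}(T\Theta)$, where $(T\Theta)_a=\sum_bH_{ba}^{\ast}\Theta_bH_{ba}$ is exactly the operator appearing in the proof of Proposition~\ref{der}. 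Starting from the rank-one tuple $\Theta^{(0)}$ with $\Theta^{(0)}_{c^{-1}}=Sf\otimes\overline{Sf}$ and $\Theta^{(0)}_b=0$ otherwise, one gets $a_n=\Phi_n(\Theta^{(0)})=\Phi_N(T^{\,n-N}\Theta^{(0)})$ for every fixed $N\ge N(g)$. The compatibility condition \eqref{compatibility} says $T(B_a)=(B_a)$, and reading the definition \eqref{inner} of the inner product at level $N$ together with this invariance gives $\Phi_N((B_a))=\|g\|^2$.

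Finally I would extract the limit spectrally. By Corollary~4.8 of \cite{K-S3} (the same input used in Proposition~\ref{pregressi}) the operator $T$ has spectral radius $1$, with $1$ a simple eigenvalue whose eigentuple is the strictly positive $(B_a)$; the positive-cone argument of Proposition~\ref{pregressi} excludes Jordan blocks on the peripheral spectrum. The associated left eigenfunctional is $\ell(\Theta)=\frac1{k_0}\sum_a\tr(\widehat{B}_{a^{-1}}\Theta_a)$: indeed, using the definition $H_{ba}^{\ast}=\widehat{H}_{a^{-1}b^{-1}}$ of the twin maps together with the twin compatibility $\widehat{B}_{b^{-1}}=\sum_a\widehat{H}_{a^{-1}b^{-1}}^{\ast}\widehat{B}_{a^{-1}}\widehat{H}_{a^{-1}b^{-1}}$ of Proposition~\ref{der} yields $\sum_aH_{ba}\widehat{B}_{a^{-1}}H_{ba}^{\ast}=\widehat{B}_{b^{-1}}$, whence $\ell(T\Theta)=\ell(\Theta)$ by cyclicity of the trace; and the normalisation $\ell((B_a))=\frac1{k_0}\sum_a\tr(\widehat{B}_{a^{-1}}B_a)=1$ holds precisely because $k_0=\sum_c\tr(\widehat{B}_cB_{c^{-1}})$. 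Decomposing $\Theta^{(0)}=\ell(\Theta^{(0)})(B_a)+R$ with $R$ lying in the span of the other spectral subspaces, the Abel sum $\varepsilon\sum_na_ne^{-\varepsilon n}$ annihilates $R$ in the limit (eigenvalues of modulus $<1$ contribute $O(\varepsilon)$, and peripheral eigenvalues $\zeta\neq1$ contribute $\varepsilon\sum_n\zeta^ne^{-\varepsilon n}\to0$), so the limit equals $\ell(\Theta^{(0)})\,\Phi_N((B_a))=\frac1{k_0}\widehat{B}_c(Sf,Sf)\|g\|^2$, using $\ell(\Theta^{(0)})=\frac1{k_0}\tr(\widehat{B}_c\,Sf\otimes\overline{Sf})=\frac1{k_0}\widehat{B}_c(Sf,Sf)$. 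I expect the main obstacle to be this identification of the left eigenfunctional with the twin form and the pinning-down of the constant $k_0$, since that is where the twin compatibility condition enters essentially and is what forces the clean factorisation $\frac1{k_0}\widehat{B}_c(Sf,Sf)\|g\|^2$; the Abelian bookkeeping and the reduction to $\mathcal{H}^\infty$ are comparatively routine.
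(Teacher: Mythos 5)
Your proposal is correct and follows essentially the same route as the paper's own proof: reduce to $g\in\mathcal{H}^{\infty}$ via Lemma~\ref{lem3.2} and density, use Lemma~\ref{sfdef} to rewrite the coefficients as $Sf(g(x^{-1}))$, turn the shell sums into iterates of a transfer operator via multiplicativity, and extract the Abel limit from the simple eigenvalue $1$, with the pairing of $(B_a)$ against $(\widehat{B}_{a^{-1}})$ producing the constant $k_0=\sum_a\mathrm{tr}(\widehat{B}_aB_{a^{-1}})$. The only (cosmetic) difference is that you propagate the observable --- the tuple of forms $\Theta$ --- backward under $T=(H_{ba}^{\ast}\otimes H_{ba}')_{a,b}$, whereas the paper propagates the state $\beta_{n,b}=\sum g(x)\otimes\overline{g(x)}$ forward under the transpose $D_{4,4}=(H_{ab}\otimes\overline{H}_{ab})_{a,b}$; these are dual formulations of the same computation.
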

\begin{proof}
By Lemma \ref{lem3.2} and density we can reduce to the case
$g\in\mathcal{H}^{\infty}.$
Identify $Sf$ with an element of $\widehat{V}_c=\overline{V}_{c^{-1}}'$ and
$Sf\otimes\overline {Sf}$ with an element
of ${\mathscr{L}({V}_{c^{-1}},\overline{V}_{c^{-1}}')}$ =
${\mathscr{L}({V}_{c^{-1}},{V}_{c^{-1}}^{\ast})}$=
${\mathscr{L}({V}_{c^{-1}}^{\ast},{V}_{c^{-1}})'}$.
By  Lemma \ref{sfdef},
if $x^{-1}\in\Gamma(c)$ has length $N$ big enough,
\begin{equation*}
Sf(g(x))=<f,\mu[c,e,g(x)]>=<f,\pi(\inv x)g>\;.
\end{equation*}
Identify  $g(x)\otimes\overline{g(x)}$ with an element
of
${\mathscr{L}(\bar V_{c^{-1}}',V_{c^{-1}})}=
{\mathscr{L}({V}_{c^{-1}}^{\ast},{V}_{c^{-1}})}$
and recall the duality expressed in
\eqref{nove} to get
$$
(Sf\otimes \overline{Sf}) (g(x)\otimes\overline{g(x)})=
|<f,\pi(x^{-1})g>|^2,$$
(here we proceed with $x^{-1}$ instead of $x$ by sake of calculation).

For the purpose of the limit the contribution of $x$ such that $|x|<N$
 is irrelevant, hence it in enough to compute
\begin{eqnarray*}
& &\sum_{\atopn{x^{-1}\in{\Gamma}(c)}{|x|\geq N}}{|<f,\pi(x^{-1})g>|^2 \, e^{-\varepsilon| x|}}\\
&=&
\sum_{\atopn{x\in\widetilde{\Gamma}(c^{-1})}{|x|\geq N}}
{(Sf\otimes \overline{Sf})( g(x)\otimes\overline{g(x)}) \, e^{-\varepsilon| x|}}\;.
\end{eqnarray*}
Since the trace is linear and continuous, we shall focus on
\begin{equation}\label{sfg}
e^{-\varepsilon N}
{({Sf}\otimes\overline{Sf}) \sum_{n=0}^{+\infty} \left(
\sum_{\atopn{x\in\widetilde{\Gamma}(c^{-1}),\, x=y\cdot c^{-1}}{|x|=|y|+1=n+ N}}
{g(x)\otimes\overline{g(x)}}\right) \, e^{-\varepsilon n}}.
\end{equation}
Now we set, for any $b\in A,$ and $n\in\N,$
$$\beta_{n+N,b}=\sum_{\atopn{x\in\widetilde{\Gamma}(b),\, x=y\cdot b}{|x|=|y|+1=n+ N}}
{g(x)\otimes\overline{g(x)}}\in \mathscr{L}({V}_{b}^{\ast},{V}_{b}),$$
which defines the (column) vector $\beta_{n+N}=(\beta_{n+N,b})_{b},$
and
\begin{equation}\label{sfsf}
F_b=({Sf}\otimes \overline{Sf})\,\delta(bc)
\in {\mathscr{L}({V}_{b}^{\ast},{V}_{b})'},
\end{equation}
which defines  the (row) vector
$F=(F_b)_{b}.$

Recall from \eqref{matrix} the matrix $D_{4,4}=\left({H}_{ab}\otimes \overline{{H}}_{ab}\right)_{a,b}$.
We show first that $D_{4,4}\beta_N=\beta_{1+N}.$
Indeed, since $g$ is a multiplicative function, for any $a\in A,$
\begin{eqnarray*}
D_{4,4}\beta_N
&=&\sum_{b\in A}\sum_{\atopn{x\in\widetilde{\Gamma}(b),\, x=y\cdot b}{|x|=|y|+1= N}}
{H_{ab}g(x)\otimes\overline{H_{ab}g(x)}}\\
&= &\sum_{\atopn{b\in A }{b\neq a^{-1}}}
\sum_{\atopn{x=y\cdot b\in\widetilde{\Gamma}(b)}{|x|=|y|+1=N}}
{g(yba)\otimes\overline{g(yba)}}
=
\sum_{\atopn{z=y\cdot a\in\widetilde{\Gamma}(a)}{|z|=|y|+1= N+1}}
{g(z)\otimes\overline{g(z)}}.
\end{eqnarray*}
And, by iteration, for any $n$ we get $(D_{4,4})^n\beta_N=\beta_{n+N}.$

From \eqref{sfg} and \eqref{sfsf} we can write
\begin{eqnarray*}
{(Sf\otimes \overline{Sf})
\sum_{\atopn{x\in\widetilde{\Gamma}(c^{-1})}{|x|\geq N}}
( g(x)\otimes\overline{g(x)}) \, e^{-\varepsilon| x|}}
& =&\!\!e^{-\varepsilon N}  F \left(\sum_{n=0}^{+\infty}
{(D_{4,4}\, e^{-\varepsilon})^n\beta_N }\right),
\end{eqnarray*}
where the hypotheses on the matrix systems guarantee that the series converges.

The limit that we are interested in is therefore
$$
\lim_{\varepsilon\rightarrow 0^+}{\varepsilon\sum_{x^{-1}\in{\Gamma}(c)}
{|<f,\pi(x^{-1})g>|^2 \, e^{-\varepsilon| x|}}}
=  F \left[\lim_{\varepsilon\rightarrow 0^+}{\varepsilon\,\sum_{n=0}^{+\infty}
{(D_{4,4}\, e^{-\varepsilon})^n\beta_N }}\!\right],$$
the calculation of which we provide in the following claim.

{\bf Claim} Let $D_{\varepsilon}=D_{4,4}\, e^{-\varepsilon}$. Then
\begin{eqnarray}\label{fv}
& &  F \left[\lim_{\varepsilon\rightarrow 0^+}{\varepsilon\,\sum_{n=0}^{+\infty}
{D_{\varepsilon}^n\beta_N }}\right]=
\frac{\widehat{B}_c (Sf,Sf)}{\sum_{a\in A}{\mathrm{tr}(\widehat{B}_a B_{a^{-1}})}}
\|g\|^2.
\end{eqnarray}
{\bf Proof of the Claim}\\
The quantity in bracket
is a right eigenvector for the matrix $D_{4,4}$, corresponding to  eigenvalue $1.$
Indeed, since $D_{4,4}=\lim_{\varepsilon\rightarrow 0^+}D_{\varepsilon},$
$$
D_{4,4}[\lim_{\varepsilon\rightarrow 0^+}
{\varepsilon  \sum_{n=0}^{+\infty}{D_{\varepsilon}^n \beta_N}}]=
\lim_{\varepsilon\rightarrow 0^+}
{\varepsilon  \sum_{n=0}^{+\infty}{D_{\varepsilon}^{n+1} \beta_N}}
=\lim_{\varepsilon\rightarrow 0^+}
{\varepsilon \sum_{n=0}^{+\infty}{D_{\varepsilon}^{n} \beta_N}}.
$$

But, up to constants, $v=(\widehat{B}_{b^{-1}})_{b}$ is the only right
eigenvector of $D_{4,4}$ corresponding to eigenvalue $1$. Hence the two vectors must
be proportional, and there exists
$\alpha\in\C$ such that the left hand side of \eqref{fv} is equal to
$$
  F\alpha\, v=\alpha   (Sf\otimes \overline{Sf})( \widehat{B}_c)=
\alpha\overline{Sf}( \widehat{B}_c (Sf))
=\alpha \overline{\widehat{B}_c(Sf,Sf)}=
\alpha \widehat{B}_c(Sf,{Sf}).
$$

Let us calculate $\alpha$.
As follows from the proof of Corollary \ref{particular},
the transpose vector $\tilde{v}^{\top}$ of
$\tilde{v}=({B}_{b})_{b}$ is a left eigenvector
of $D_{4,4}$ corresponding to  eigenvalue $1$.
Hence
\begin{eqnarray*}
\alpha\sum_{b\in A}\mathrm{tr}(B_b \widehat{B}_{b^{-1}} )&=&
\tilde{v}^{\top}\alpha v=
\lim_{\varepsilon\rightarrow 0^+}
{\varepsilon \sum_{n=0}^{+\infty}{\tilde{v}^{\top} D_{\varepsilon}^{n} \beta_N}}\\
&=&
\lim_{\varepsilon\rightarrow 0^+}
{\varepsilon \sum_{n=0}^{+\infty}{e^{-n\varepsilon}\tilde{v}^{\top}\beta_N}}
=\tilde{v}^{\top} \beta_N
= \sum_{b\in A}\mathrm{tr}({B}_b \,\beta_{N,b}),
\end{eqnarray*}
and we obtain
$$\alpha
=\frac{\sum_{b\in A}\mathrm{tr}({B}_b \,\beta_{N,b})}
{\sum_{b\in A}\mathrm{tr}(B_b \widehat{B}_{b^{-1}} )}.$$
Finally
\begin{eqnarray*}
\sum_{b\in A}\mathrm{tr}({B}_b \,\beta_{N,b})&=&
\sum_{b\in A}\mathrm{tr}(\beta_{N,b}\,{B}_b)
=
\sum_{b\in A}\sum_{\atopn{x=y\cdot b\in\widetilde{\Gamma}(b)}{|x|=|y|+1= N}}
B_b(g(x),g(x))\\
&=&\sum_{|y|=N-1}\sum_{\atopn{b\in A}{|yb|=|y|+1}}{\!B_b(g(yb),g(yb))}
=<g,g>=\|g \|^2.
\end{eqnarray*}
This ends both the proof of the Claim and of the Lemma.
\end{proof}
\begin{coro}\label{coro3.4}
Let $y=zc\in\widetilde{\Gamma}(c)$. Let $f\in \mathcal{H}$ be such that
$\mathrm{supp}f\subset\Gamma\setminus\Gamma(y).$ Let $g\in\mathcal{H}.$
Then
$$\lim_{\varepsilon\rightarrow 0^+}{\varepsilon\sum_{x\in\Gamma(y)}
{|<f,\pi(x)g>|^2 e^{-\varepsilon| x|}}}=
\frac{1}{k_0}\,\widehat{B}_c(S\pi(z^{-1})f,S\pi(z^{-1})f)\, \|g\|^2,$$
where the constant $k_0$ is given in the previous Lemma.
\end{coro}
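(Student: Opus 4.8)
The plan is to deduce the corollary from Lemma~\ref{lem3.3} by left-translating the directed edge $(z,zc)$ back to $(e,c)$, i.e. by carrying the summation set $\Gamma(y)=\Gamma(zc)$ over to $\Gamma(c)$ via left multiplication by $z^{-1}$. Since $y=zc$ is a reduced word, $z$ does not end in $c^{-1}$; hence the map $x'\mapsto zx'$ is a bijection from $\Gamma(c)$ onto $\Gamma(y)$. Indeed an element of $\Gamma(y)$ starts with $zc$, so it factors as $z$ times a word beginning with $c$, and the non-cancellation at the junction (last letter of $z$ is not $c^{-1}$) shows both that the factorization is genuine and that $|zx'|=|z|+|x'|$ for every $x'\in\Gamma(c)$.

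With the substitution $x=zx'$, unitarity of $\pi$ gives
$$
<f,\pi(x)g>=<f,\pi(z)\pi(x')g>=<\pi(z^{-1})f,\pi(x')g>,
$$
while the length splitting yields $e^{-\varepsilon|x|}=e^{-\varepsilon|z|}e^{-\varepsilon|x'|}$. Therefore
$$
\varepsilon\sum_{x\in\Gamma(y)}|<f,\pi(x)g>|^2 e^{-\varepsilon|x|}
= e^{-\varepsilon|z|}\,\varepsilon\sum_{x'\in\Gamma(c)}|<\pi(z^{-1})f,\pi(x')g>|^2 e^{-\varepsilon|x'|},
$$
and since $e^{-\varepsilon|z|}\to1$ as $\varepsilon\to0^+$, the limit we want equals the limit of the right-hand sum (the translation factor is harmless).

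It then remains to check that Lemma~\ref{lem3.3} applies to the translated data, namely that $\pi(z^{-1})f$ is supported off $\Gamma(c)$. From the action rule $(\pi(z^{-1})f)(x)=f(zx)$ one gets $\mathrm{supp}(\pi(z^{-1})f)=z^{-1}\,\mathrm{supp}(f)\subset z^{-1}(\Gamma\setminus\Gamma(zc))=\Gamma\setminus\Gamma(c)$, using once more that $z^{-1}\Gamma(zc)=\Gamma(c)$. Lemma~\ref{lem3.3}, applied with $f$ replaced by $\pi(z^{-1})f$ and with the same generator $c$ and the same $g$, then gives
$$
\lim_{\varepsilon\to0^+}\varepsilon\sum_{x'\in\Gamma(c)}|<\pi(z^{-1})f,\pi(x')g>|^2 e^{-\varepsilon|x'|}
=\frac{1}{k_0}\,\widehat{B}_c(S\pi(z^{-1})f,S\pi(z^{-1})f)\,\|g\|^2,
$$
which is exactly the asserted value. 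I do not expect a genuine analytic obstacle here: all the analytic content sits in Lemma~\ref{lem3.3}, and the only points demanding care are the elementary reducedness facts that make $x\mapsto z^{-1}x$ a length-additive bijection $\Gamma(y)\to\Gamma(c)$ and that transport the support hypothesis from $\Gamma\setminus\Gamma(y)$ to $\Gamma\setminus\Gamma(c)$.
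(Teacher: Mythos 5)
Your proposal is correct and follows essentially the same route as the paper: the paper's proof likewise factors $x=z\cdot(ct)$, notes $\pi(x)=\pi(z)\pi(ct)$, and applies Lemma~\ref{lem3.3} to $\pi(z^{-1})f$, which is supported in $\Gamma\setminus\Gamma(c)$. Your version simply spells out the length-additive bijection $\Gamma(c)\to\Gamma(y)$ and the harmless factor $e^{-\varepsilon|z|}$, which the paper leaves implicit.
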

\begin{proof}
If $x\in\Gamma(y)$ and $x=zct$ then $\pi(x)=\pi(z)\pi(ct).$
It is sufficient to apply the previous Lemma to $f_1=\pi(z^{-1})f$ which has
support in $\Gamma\setminus\Gamma(c).$
\end{proof}

\section{Appendix}

In order to prove Lemma \ref{mum0} and Theorem \ref{mumu0} we need some notation.
For any $x=a_1 a_2\dots a_J\in \Gamma,$ $a_i\in A,$  reduced word, we define
$$\begin{array}{ll}
x_0=e,&\\
x_j=a_1 a_2\dots a_j,& \text{for}\;j=1,\dots,J,\\
\tilde{x}_j=a_{j+1} a_{j+2}\dots a_J,& \text{for}\;j=0,\dots,J-1,\\
\tilde{x}_J=e,&\\
x_J=\tilde{x}_0=x.&\\
\end{array}$$
If  $x=a_1 a_2\dots a_J$ is fixed, we introduce $J+1$ subsets of $\Gamma$, namely
if $j=1,\dots,J-1,$
$$\begin{array}{l}
\Lambda_0=\{z\in\Gamma,\, | z| =J,\, z=b_1 b_2\dots b_J,\, b_1\neq a_1\},\\
\Lambda_j=\{z\in\Gamma,\, | z| =J,\, z=a_1 \dots a_j b_{j+1}\dots b_J,
\, b_{j+1}\neq a_{j+1}\}, \\
\Lambda_J=\{x\}.
\end{array}$$
So that,  for $j=1,\dots,J-1$ and $z\in \Lambda_j$
$$x^{-1}z=a_J^{-1}a_{J-1}^{-1}\dots a_2^{-1}a_1^{-1}b_1 b_2\dots b_J=
a_J^{-1}\dots a_{j+1}^{-1}b_{j+1} \dots b_J=\tilde{x}_j^{-1}\tilde{z}_j,$$
hence
$$\Lambda_j=\{z\in\Gamma,\, | z| =| z_j| +|\tilde{z}_j|,\, x_j=z_j,\,
|\tilde{x}_j^{-1}\tilde{z}_j|=|\tilde{x}_j^{-1}|+|\tilde{z}_j|\}.$$
Furthermore for a word $y=b_1\dots b_n,$ $b_i\in A,$ we write in short
\begin{equation}\label{xj}
\begin{array}{l}
H(y)=H_{b_n b_{n-1}}\dots H_{b_2 b_{1}},\quad H(b_1)=\textrm{id},\\
\hat{H}(y)=H(y^{-1})^{\ast}=\hat{H}_{b_n b_{n-1}}\dots \hat{H}_{b_2 b_{1}}.
\end{array}
\end{equation}

We recall that the positive definite sesquilinear form $B_a$
on the space $V_a$ is identified with
a map $B_a \in\mathscr{L}(V_a,V_a^{\ast}),$ hence we shall often use the equality
$B_a(H_{ab}v,H_{ab}w)=H_{ab}^{\ast}B_a H_{ab}(v,w),$
for any $v,w\in V_b.$

{\bf Proof of Lemma \ref{mum0}}

Let $J\geq 1,$ $x=a_1 a_2\dots a_J$ be a reduced word,
$a,b\in A,$ $v_a\in V_a$, $v_b\in V_b$.

By definition of the inner product in $\mathcal{H}$ and the representation $\pi$,
since the set of words of length $J$ is the disjoint union of the sets $\Lambda_j$,
$j=0,\dots, J,$ we have
\begin{eqnarray*}
& &<\mu[e,a,v_a],\pi(x)\mu[e,b,v_b]>\\  \\
&=&\sum_{| z|=J}\sum_{\atopn{c\in A }{| z c|=J+1}}{B_c(\mu[e,a,v_a](zc),\mu[e,b,v_b](x^{-1}z c))}\\
&=&\sum_{j=0}^J\sum_{z\in \Lambda_j}\sum_{\atopn{c\in A }{| z c|=J+1}}
{B_c(\mu[e,a,v_a](x_j\tilde{z}_j c),\mu[e,b,v_b](\tilde{x}_j^{-1}\tilde{z}_j c))}
\end{eqnarray*}
\begin{eqnarray*}
&=&\sum_{z\in \Lambda_0}\sum_{\atopn{c\in A }{| z c|=J+1}}{\dots}
+\sum_{j=1}^{J-1}\sum_{z\in \Lambda_j}
\sum_{\atopn{c\in A }{| z c|=J+1}}
{\dots}+\sum_{z\in \Lambda_J}\sum_{\atopn{c\in A }{| z c|=J+1}}{\dots}
\end{eqnarray*}

We write  both
$\mu[e,a,v_a](x_j\tilde{z}_j c)$ and  $\mu[e,b,v_b](\tilde{x}_j^{-1}\tilde{z}_j c)$
 as products of different $H\!$s in the three  sums above.
   We obtain that the latter is equal to
\begin{eqnarray*}
& & \sum_{\atopn{|z|=J}{z=b_1\dots b_J}}
\left[\sum_{\atopn{c\in A }{c b_J \neq e}}
{H_{c b_J}^{\ast}B_c H_{c b_J}}\right](H(z)(v_{a}),
H(z)  H_{b_1 a_1^{-1}} H(x^{-1})(v_{b}))\cdot\\
& &\hspace{8,5cm}\cdot\delta{(b_1 a^{-1})}\delta{(a_J b)}\\
&  &
+\sum_{j=1}^{J-1}\sum_{\atopn{z\in \Lambda_j}
{z=a_1\dots a_jb_{j+1}\dots b_J}}
\left[\sum_{\atopn{c\in A }{| z c|=J+1}}
{H_{c b_J}^{\ast}B_c H_{c b_J}}\right](H(\tilde{z}_j)H_{ b_{j+1}a_j} H(x_j)(v_{a}),\\ \\
& &\hspace{4cm} H(\tilde{z}_j)H_{ b_{j+1}a_{j+1}^{-1}}
 H(\tilde{x}_j^{-1})(v_{b}))\,\cdot\delta{(a_1 a^{-1})}\delta{(a_J b)}\\
& &+
{B_b(H_{c a_J} H(x)(v_{a}),v_b)\,\cdot\delta{(a_1 a^{-1})}}.
\end{eqnarray*}
We can apply repeatedly  the identity \eqref{compatibility} which characterizes
 $B_c$ in the first sum,
 and \eqref{ee} which characterizes $E_{ab}$ in the second sum.
Assuming  the convention that (see Definition \ref{defE})
$$\begin{array}{c}
E_{a_{1}a_0}=E_{a_{1}e}:=H_{a a_1^{-1}}^{\ast}B_{a}(v_{a})\in \widehat{V}_{a_1},\\ \\
 E_{a_{J+1}a_J}=E_{e a_J}:=
\overline{H_{b a_J}^{\ast}B_{b}(v_b)}\in V_{a_J}',
\end{array}$$
 we get
\begin{eqnarray}
& &<\mu[e,a,v_a],\pi(x)\mu[e,b,v_b]>=
H_{a a_1^{-1}}^{\ast} B_{a}(v_{a}, H(x^{-1})(v_{b}))\delta{(a_J b)}\nonumber\\ \nonumber\\
& &+\;\;\;
\sum_{j=1}^{J-1}E_{a_{j+1}a_j}( H(x_j)(v_{a}),H(\tilde{x}_j^{-1})(v_{b}))
\delta{(a_1 a^{-1})}\delta{(a_J b)}\nonumber\\ \nonumber\\
& &+\;\;\;
\overline{H_{b a_J}^{\ast}B_b(v_b, H_{b a_J}H(x)(v_{a}))}\delta{(a_1 a^{-1})}\nonumber\\ \nonumber\\
\;\;\;&& =\;\;\;
\sum_{j=0}^{J}E_{a_{j+1}a_j}( H(x_j)(v_{a}),H(\tilde{x}_j^{-1})(v_{b}))
\delta{(a_1 a^{-1})}\delta{(a_J b)}.\label{phi1}
\end{eqnarray}
After the position
$w_a=v_a \,\delta{(a_1 a^{-1})},$ and $ w_b=v_b \,\delta{(a_J b)},$
 each term in the sum \eqref{phi1} can be written by \eqref{xj} as
$$
E_{a_{j+1}a_j}(H(x_j)(w_{a}),H(\tilde{x}_{j}^{-1})(w_{b}))
=[\widehat{H}(\tilde{x}_j)E_{a_{j+1}a_j}H(x_j)(w_{a})](w_{b}),
$$
where at the extreme indexes
$$\begin{array}{l}
[\widehat{H}(\tilde{x}_0)E_{a_{1}a_0}H(x_0)(w_{a})](w_{b})= \widehat{H}(x)E_{a_1 e}(w_{b}),\\ \\

[\widehat{H}(\tilde{x}_J)E_{a_{J+1}a_J}H(x_J)(w_{a})](w_{b})= E_{e a_J}(H(x)w_{a}).
\end{array}$$
To conclude the proof it is sufficient to  prove the recursive formulas for the following objects
\begin{equation}\label{funoedue}
f^1_J(x)=\sum_{j=0}^{J-1}[\widehat{H}(\tilde{x}_j)E_{a_{j+1}a_j}H(x_j)(w_{a})],
\quad
 f^2_J(x)=H(x)(w_{a}).
\end{equation}

Indeed, by isolating in $f^1_J$ the term corresponding to $J-1$ we get for
$x=a_1\dots a_J,$
\begin{eqnarray*}
f^1_J(a_1\dots a_J)
&=&
\widehat{H}_{a_J a_{J-1}}f^1_{J-1}(a_1\dots a_{J-1})+
E_{a_J a_{J-1}}f^2_{J-1}(a_1\dots a_{J-1}),
\end{eqnarray*}
while
$$ f^2_J(a_1\dots a_{J})
=H_{a_J a_{J-1}}H(a_1\dots a_{J-1})(w_a)=H_{a_J a_{J-1}}f^2_{J-1}(a_1\dots a_{J-1}).$$
If $J=2$ then $x=a_1 a_{2}\neq e$. By isolating the terms for $j=0$ and $j=1,$ in
\eqref{funoedue} we obtain
$$
\begin{array}{rcccc}
f^1_2(a_1 a_{2})=&{\underbrace{\widehat{H}_{a_2 a_{1}}E_{a_1 e}}}&+&
\underbrace{E_{a_2 a_{1}}(w_{a})}, &\text{and}\quad f^2_2(a_1 a_{2})=H_{a_2 a_{1}}(w_{a}),\\
&{j=0}& &{j=1}&
\end{array}
$$
as desired. The Lemma is so completely proved.\hfill $\Box$

{\bf Proof of Theorem \ref{mumu0}}
Let $x=a_1\dots a_J\in\Gamma(c)\cap\widetilde{\Gamma}(d)$, so that $a_1=c$
and $a_J=d$. Let $R(d)$
be the row vector defined in \eqref{erre}. Also define
 the column vector
$$S^{(J-1)}=(\begin{array}{cccc}
( S^{(J-1)}_{1,1,c'})_{ c'\in A}& (S^{(J-1)}_{1,2,c'})_{ c'\in A}&
(S^{(J-1)}_{2,1,c'})_{ c'\in A}&
(S^{(J-1)}_{2,2,c'})_{ c'\in A}
\end{array})^{\top}$$
 as follows: for any $c'\in A,$ $h,i=1,2,$
\begin{equation}\label{vetts}
S^{(J-1)}_{h,i,c'}=
\left(\sum_{\atopn{x\in\Gamma(c)\cap\widetilde{\Gamma}(d')}{| x| =J-1}}
{f_{J-1}^i(x)\otimes \overline{f_{J-1}^h(x)}}\right)\delta(d'^{-1}c').
\end{equation}

We can write $|<\mu[e,a,v_a],\pi(x)\mu[e,b,v_b]>|^2$ as the product of
\eqref{mum} by its
complex conjugate. We obtain, by the properties of the tensor product,
\begin{eqnarray}
& &
\sum_{\atopn{x\in\Gamma(c)\cap\widetilde{\Gamma}(d)}{| x| =J}}
{|<\mu[e,a,v_a],\pi(x)\mu[e,b,v_b]>|^2}\nonumber\\ \nonumber\\
&=&
\sum_{\atopn{d'\in A,}{ d d'\neq e}}
\sum_{\atopn{x\in\Gamma(c)\cap\widetilde{\Gamma}(d')}{| x| =J-1}}
R(d)
({\mathcal{{\widetilde D}}}\otimes\overline{\mathcal{{\widetilde D}}})\nonumber\\
\nonumber\\
& &
\left(\begin{array}{c}
(f^1_{J-1}(x)\delta(d'^{-1}c'))_{c'\in A}\\   \\
(f^2_{J-1}(x)\delta(d'^{-1}c'))_{c'\in A}
\end{array}\right)\otimes
\left(\begin{array}{c}
(\overline{f^1_{J-1}(x)}\delta(d'^{-1}c'))_{c'\in A}\\   \\
\overline{(f^2_{J-1}(x)}\delta(d'^{-1}c'))_{c'\in A}
\end{array}\right)\nonumber\\ \nonumber \\
&=&R(d)\mathcal{D}S^{(J-1)}\label{rs},
\end{eqnarray}

By Lemma \ref{mum0} both $f^1_{J-1}$ and $f^2_{J-1}$ can be
written in terms
of $f^1_{J-2}$ and $f^2_{J-2}$ , hence  easily we obtain
$S^{(J-1)}={\mathcal{D}}S^{(J-2)},$ and
$S^{(2)}={\mathcal{D}}S^{(1)}.$

For example, if $J> 2,$
$$S^{(J-1)}_{h,i,d'}=
\sum_{\atopn{c'\in A}{d'c'\neq e}}
\sum_{\atopn{x\in\Gamma(c)\cap\widetilde{\Gamma}(c')}{| x| =J-2}}
{f^i_{J-1}(xd')\otimes \overline{f^h_{J-1}(xd')}},$$
where, by tensor products:
\begin{eqnarray*}
f^1_{J-1}(xd')\otimes\overline{f^1_{J-1}(xd')}
 &=&\widehat{H}_{d' c'}\otimes\overline{\widehat{H}}_{d' c'}
(f^1_{J-2}(x)\otimes\overline{{f^1_{J-2}(x)}})\\ \\
& &+
\widehat{H}_{d' c'}\otimes\overline{E}_{d' c'}
(f^1_{J-2}(x)\otimes\overline{{f^2_{J-2}(x)}})\\ \\
& &+E_{d' c'}\otimes\overline{\widehat{H}}_{d' c'}
(f^2_{J-2}(x)\otimes\overline{{f^1_{J-2}(x)}})\\ \\
& &+
E_{d' c'}\otimes\overline{E}_{d' c'}(f^2_{J-2}(x)\otimes\overline{{f^2_{J-2}(x)}}).
\end{eqnarray*}
So that, for $h=i=1,$ we get in particular
\begin{eqnarray*}
& \!\!\! &S^{(J-1)}_{1,1,d'}\\
&  \!\!\! \!\!\!=& \!\!\!\sum_{c'\in A}(1-\delta(d'c'))\left[
\widehat{H}_{d' c'}\otimes\overline{\widehat{H}}_{d' c'}(S^{(J-2)}_{1,1,c'})+
\widehat{H}_{d' c'}\otimes\overline{E}_{d' c'}(S^{(J-2)}_{2,1,c'})\right.\\ \\
& & +\left. E_{d' c'}\otimes\overline{\widehat{H}}_{d' c'}(S^{(J-2)}_{1,2,c'})+
E_{d' c'}\otimes\overline{E}_{d' c'}(S^{(J-2)}_{2,2,c'})\right],
\end{eqnarray*}
in other words $S^{(J-1)}_{1,1,d'}$ is the term corresponding to $d'\in A$ in the product
$$\left(\begin{array}{cccc}
(\widehat{H}_{a b}\otimes\overline{\widehat{H}}_{a b})_{a,b}&
 \!\!\!(E_{a b}\otimes\overline{\widehat{H}}_{a b})_{a,b}&
 \!\!\!(\widehat{H}_{ a b}\otimes\overline{E}_{ a b})_{a, b}&
 \!\!\!(E_{ a b}\otimes\overline{E}_{ a b})_{a, b}
\end{array}\!\!\right)S^{(J-2)}.$$
Analogously we can proceed for all other elements in the vector $S^{(J-1)}.$
If instead $J=2$ then $x=cd\neq e$ and
$$S^{(2)}_{h,i,d}=f^i(cd)\otimes\overline{f^h(cd)},$$
where, as seen in the proof of Lemma \ref{mum0}
$$
f^1(cd)={\widehat{H}_{dc}E_{c e}}+E_{d c}(w_{a}),\quad\text{and}\quad
f^2(cd)=H_{dc}(w_{a}).$$
It follows that $S^{(2)}={\mathcal D} S^{(1)},$
where, from the definition of $S^{(J)}$ in \eqref{vetts} and $S(c)$ in \eqref{esse}
is easy to see that effectively
$
S^{(1)}
=S(c).
$
So
$$S^{(J-1)}={\mathcal D} S^{(J-2)}={\mathcal D}^{J-2} S^{(1)},$$
and from \eqref{rs} we get the desired statement 
$$
\sum_{\atopn{x\in\Gamma(c)\cap\widetilde{\Gamma}(d)}{|x|=J}}
{<\mu[e,a,v_a],\pi(x)\mu[e,b,v_b]>|^2}
=R(d)\,{\mathcal D}^{J-1}\,S(c).
$$
\hfill$\Box$
\bibliographystyle{amsalpha}
\bibliography{hyp0nw}

\end{document}